\documentclass[a4paper,reqno,11pt]{amsart}
\usepackage[leqno]{amsmath}
\newfont{\cyr}{wncyr10 scaled 1100}

\usepackage[left=2.7cm,right=2.7cm,top=3.5cm,bottom=3cm]{geometry}

\usepackage{amsthm,amssymb,amsmath,amsfonts,mathrsfs,amscd}
\usepackage{mathtools}
\usepackage{extarrows}
\usepackage[latin1]{inputenc}
\usepackage[all]{xy}
\usepackage{latexsym}
\usepackage{longtable}
\usepackage{xcolor}
\usepackage{tikz-cd}
\usepackage{extarrows}
\usepackage{comment}
\usepackage[shortlabels]{enumitem}
\usepackage{setspace}
\setdisplayskipstretch{}
\usepackage{multirow}
\usepackage{xcolor,colortbl}
\usepackage{changepage}
\usepackage{float}
\usepackage{hyperref}
\numberwithin{equation}{section}
\theoremstyle{plain}
\newtheorem{theorem}{Theorem}[section]
\newtheorem{corollary}[theorem]{Corollary}

\newtheorem{lemma}[theorem]{Lemma}
\newtheorem{proposition}[theorem]{Proposition}
\newtheorem{conjecture}[theorem]{Conjecture}

\theoremstyle{definition}
\newtheorem{definition}[theorem]{Definition}

\newtheorem{examplewr}[theorem]{Example}

\newtheorem{definition/corollary}[theorem]{Definition/Corollary}
\newtheorem{definition/lemma}[theorem]{Definition/Lemma}
\theoremstyle{remark}
\newtheorem{obswr}[theorem]{Observation}
\newtheorem{remarkwr}[theorem]{Remark}

\definecolor{Gray}{gray}{0.85}
\definecolor{LightCyan}{rgb}{0.88,1,1}

\newcolumntype{g}{>{\columncolor{Gray}}c}
\newcolumntype{y}{>{\columncolor{LightCyan}}c}
\newcolumntype{o}{>{\columncolor{pink}}c}

\newenvironment{remark}{\begin{remarkwr}\begin{upshape}}{\end{upshape}\end{remarkwr}}
\newenvironment{example}{\begin{examplewr}\begin{upshape}}{\end{upshape}\end{examplewr}}

\newcommand{\GL}{\mathrm{GL}}

\newcommand{\SL}{\mathrm{SL}}

\newcommand{\Hom}{\mathrm{Hom}}
\newcommand{\res}{\mathrm{res}}

\DeclareMathOperator{\inte}{\mathrm{int}_{(-,\omega)}}
\DeclareMathOperator{\eva}{\mathrm{ev}_{(-,\omega)}}

\begin{document}
 \include{thebibliography}

\title[Antisymmetry of real quadratic singular moduli]
{Antisymmetry of real quadratic singular moduli}

\author{S\"oren Sprehe}

\address{S.~S.: Bielefeld University, Bielefeld, Germany}
\email{ssprehe@math.uni-bielefeld.de}

\begin{abstract}
We confirm a conjecture of Darmon--Vonk on the antisymmetry of real quadratic singular moduli. The proof relies on a careful analysis of rigid meromorphic cocycles \`a la Darmon--Gehrmann--Lipnowski for the split orthogonal group on four variables. Moreover, we prove the modularity of a generating series of Kudla--Millson divisors in the spirit of Gross--Kohnen--Zagier.
\end{abstract}

\maketitle

\tableofcontents
\section{Introduction}
Let $\mathcal{H}\subset \mathbb{C}$ be the complex upper half-plane and recall that $\mathcal{H}$ is endowed with an action of the modular group $\SL_2(\mathbb{Z})$ by M\"obius transformations. By definition, a holomorphic function $f\colon \mathcal{H}\rightarrow \mathbb{C}$ is a modular function if and only if $f$ is invariant under the $\SL_2(\mathbb{Z})$-action on $\mathcal{H}$. An example for a modular function is the modular $j$-function $j\colon \mathcal{H}\rightarrow \mathbb{C}$ whose Laurent series expansion in the variable $q=\exp(2\pi i z)$ for $z\in \mathcal{H}$ is of the form 
\begin{align*}
j(z)=q^{-1}+744+196884q+21493760q^2+\cdots.
\end{align*}
A singular modulus is then the value of $j$ at a CM point in $\mathcal{H}$. This is an imaginary quadratic number contained in $\mathcal{H}$. It is well-known that singular moduli are algebraic integers. In fact, one of the fundamental results in the theory of complex multiplication implies that singular moduli are contained in specific abelian extensions of imaginary quadratic fields. Since singular moduli are algebraic, it makes sense to study their norms and traces. In the influential work \cite{GrossZagiersingular} Gross and Zagier consider the difference of two singular moduli
\begin{align*}
J_\infty(\tau_1,\tau_2)\coloneq j(\tau_1)-j(\tau_2)
\end{align*}
and determine the prime factorization of the absolute norm of this quantity. Note that simply by its definition the assignment $J_\infty$ is symmetric in the sense that we have 
\begin{align*}
J_\infty(\tau_1,\tau_2)=-J_\infty(\tau_2,\tau_1).
\end{align*} 
When attempting to naively apply the methods of complex multiplication to construct abelian extensions of real quadratic fields, one quickly encounters a series of seemingly insurmountable challenges. For example, real quadratic irrationalities do not lie in the upper half-plane $\mathcal{H}$, where the $j$-function is defined. However, in their seminal work \cite{DarmonVonksingularmoduli} Darmon and Vonk use $p$-adic methods to propose a framework in which so-called rigid meromorphic cocycles play the role of the modular $j$-function and singular moduli are replaced by the values of these objects at special points. \\
We fix a prime number $p\in \mathbb{Z}$ and denote by $\Gamma\coloneq \mathrm{SL}_2(\mathbb{Z}[1/p])$ the Ihara group. A rigid meromorphic cocycle is an element of the first group cohomology group of $\Gamma$ with values in the multiplicative group of non-zero rigid meromorphic functions on Drinfeld's upper half-plane $\mathcal{H}_p\coloneq \mathbb{P}^1(\mathbb{C}_p)\setminus \mathbb{P}^1(\mathbb{Q}_p)$ with prescribed divisor. Let $\omega\in \mathcal{H}_p$ be a real quadratic irrationality contained in $\mathcal{H}_p$. In analogy to the notion of CM points, we refer to such points as RM points. Then there exists a distinguished class $\mathcal{D}_\omega$ in the first group cohomology group of $\Gamma$ with values in the group of Weil divisors on $\mathcal{H}_p$. The class $\mathcal{D}_\omega$ is supported on the $\Gamma$-orbit of $\omega$ and, thus, should be interpreted as the analogue of the divisor of the function $J_\infty(-,\tau_2)\colon \mathcal{H}\rightarrow \mathbb{C}$ for a CM point $\tau_2\in \mathcal{H}$. The divisor of a rigid meromorphic cocycle is then a finite linear combination of classes $\mathcal{D}_\omega$. For the sake of simplicity,\footnote{Actually, such a class can not exist. But certain Hecke translates of the class $\mathcal{D}_\omega$ indeed lift to a rigid meromorphic cocycle.} we assume for the introduction that there exists a rigid meromorphic cocycle $\mathcal{J}_\omega$ whose divisor is, precisely, given by $\mathcal{D}_\omega$. Note that such a class is unique up to multiplication with a rigid meromorphic cocycle with values in the group of invertible rigid analytic functions on $\mathcal{H}_p$. While cohomology classes are not quite functions, one can still make sense of the value of a rigid meromorphic cocycle at an RM point in $\mathcal{H}_p$. Indeed, if $\tau\in \mathcal{H}_p$ is an RM point not contained in the $\Gamma$-orbit of $\omega$, then the stabilizer $\Gamma_\tau$ of $\tau$ in $\Gamma$ is (up to two-torsion) infinite cyclic. There is a distinguished generator $\gamma_\tau\in \Gamma_\tau$ called the automorph of $\tau$ and we consider the $p$-adic number 
\begin{align}\label{intro1}
\tag{1} J_p(\tau,\omega)\coloneq \mathcal{J}_\omega(\gamma_\tau)(\tau)\in \mathbb{C}_p^\times
\end{align}
where we identify the class $\mathcal{J}_\omega$ with a one-cocycle representing the class. This expression is well-defined up to multiplication with a root of unity in $\mathbb{C}_p$ and a unit in the real quadratic field $\mathbb{Q}(\tau)$. The function $J_p$ enjoys striking parallels with the difference of two classical singular moduli. Indeed, the $p$-adic number $J_p(\tau,\omega)$ is conjectured to be algebraic and expected to belong to the compositum of specific abelian extensions of the real quadratic fields $\mathbb{Q}(\tau)$ and $\mathbb{Q}(\omega)$. Moreover, the assignment does not depend on the choice of RM point in its $\Gamma$-orbit. Crucially, numerical experiments provide evidence that the $p$-adic number $J_p(\tau,\omega)$ admits an analogous factorization as the difference of two classical singular moduli (see \cite{DarmonVonksingularmoduli}, Section 3.4, Conjecture 3.27). In analogy to the symmetry of the function $J_\infty$, Darmon and Vonk conjecture the following (see \cite{DarmonVonksingularmoduli}, Section 3.3, Remark 3.19).

\medskip\medskip
\noindent
{\bf Conjecture A}. {\em 
Let $(\tau,\omega)$ be a pair of RM points such that $\tau\notin \Gamma\cdot \omega$. Then
\begin{align*}
J_p(\tau,\omega)=J_p(\omega,\tau)^{-1}
\end{align*}
up to a root of unity and a product of units in the real quadratic fields $\mathbb{Q}(\omega)$ and $\mathbb{Q}(\tau)$.
}

\medskip

Numerically, the antisymmetry was verified in numerous examples. However, the roles of the RM points $\tau$ and $\omega$ in the construction of $J_p(\tau,\omega)$ are substantially different and the natural question arises if there is a more \emph{symmetric} approach to define $J_p$. In this article, we construct the function $J_p$ as a symmetric two-variable function defined on the product $\mathcal{H}_p^2$. Using this reinterpretation, Conjecture A follows immediately from the graded commutativity of the cup product. \\
Our approach is motivated by the observation that if $J_p$ exists as a two-variable function, then its divisor should be supported on twisted diagonals of the form
\begin{align*}
\Delta_{v,p}\coloneq \lbrace (vz,z)\mid z\in \mathcal{H}_p\rbrace\subset \mathcal{H}_p^2
\end{align*} 
for $v\in \Gamma$. This cycles arise in the work of Darmon, Gehrmann and Lipnowski \cite{DGL} on rigid meromorphic cocycles for orthogonal groups in the specific case of the four-dimensional rational quadratic space $(\mathrm{M}_2(\mathbb{Q}),\mathrm{det})$. Their work produces cohomology classes $\mathcal{D}_n$ for integers $n\geq 1$ that belong to the second group cohomology group of $\Gamma^2$ with values in the group $\mathrm{Div}^\dagger_{\mathrm{rq}}(\mathcal{H}_p^2)$ of rational quadratic divisor on $\mathcal{H}_p^2$. This is the group of Weil divisors on $\mathcal{H}_p^2$ which are supported on the divisors $\Delta_{v,p}$ for matrices $v\in \GL_2(\mathbb{Q})$ of positive determinant. We call the classes $\mathcal{D}_n$ \emph{Kudla--Millson divisors} for $\Gamma^2$. In the special case $n=1$, the class $\mathcal{D}_1$ is induced by a cocycle which is supported on the divisors $\Delta_{\gamma,p}$ for $\gamma\in \Gamma$ and, therefore, should be viewed as the $p$-adic analogue of the divisor of the two-variable function $J_\infty$. For simplicity, we assume that there is a class $\mathcal{J}_1$ of $\Gamma^2$ with values in the multiplicative group $\mathcal{M}_{\mathrm{rq}}^\times$ of non-zero rigid meromorphic functions on $\mathcal{H}_p^2$ with rational quadratic divisor whose associated divisor is $\mathcal{D}_1$. The class $\mathcal{J}_1$ is uniquely determined by this condition up to multiplication with a class with values in the group of invertible rigid analytic functions on $\mathcal{H}_p^2$. We can meaningfully evaluate $\mathcal{J}_1$ at a pair of RM points $(\tau,\omega)\in \mathcal{H}_p^2$ not contained in the same $\Gamma$-orbit and we consider the $p$-adic number 
\begin{align}\label{intro2}
\tag{2} \hat{J}_p(\tau,\omega)\coloneq \mathcal{J}_1[\tau,\omega]\in \mathbb{C}_p^\times
\end{align}
where $\mathcal{J}_1[\tau,\omega]$ is the value of $\mathcal{J}_1$ at $(\tau,\omega)$. This quantity is well-defined up to multiplication with a root of unity. Our central result shows that the functions defined in \eqref{intro1} and \eqref{intro2} coincide:

\medskip\medskip
\noindent
{\bf Theorem B [Theorem \ref{proptoproveconjecture}]}. {\em 
Let $(\tau,\omega)$ be a pair of RM points not belonging to the same $\Gamma$-orbit. Then
\begin{align*}
J_p(\tau,\omega)^{-2}=\hat{J}_p(\tau,\omega)
\end{align*}
up to multiplication with a root of unity and a unit in the real quadratic field $\mathbb{Q}(\tau)$.
}

\medskip

The advantage of the function $\hat{J}_p$ is its symmetry: if we denote by $\varsigma_p\colon \mathcal{H}_p^2\rightarrow \mathcal{H}_p^2$ the morphism of rigid analytic spaces which switches the coordinates, then $\varsigma_p$ induces an involution on the level of divisors and rigid meromorphic functions on $\mathcal{H}_p^2$, respectively. The identity $\varsigma_p(\Delta_{v,p})=\Delta_{v^{-1},p}$ extends easily to the observation that the class $\mathcal{D}_1$ remains fixed under this involution (Lemma \ref{symmetryofD1}) and, therefore, the image of the class $\mathcal{J}_1$ equals $\mathcal{J}_1$ up to multiplication with a class with values in the group of invertible rigid analytic functions. This implies the antisymmetry of $\hat{J}_p$ and, hence, with Theorem B the desired Conjecture A.

\medskip\medskip
\noindent
{\bf Proposition C [Proposition \ref{propforantisymmetry}]}. {\em 
The function $\hat{J}_p$ satisfies
\begin{align*}
\hat{J}_p(\tau,\omega)=\hat{J}_p(\omega,\tau)^{-1}
\end{align*}
up to a roof of unity.
}

\medskip

As another application of the framework of Darmon, Gehrmann and Lipnowski for the quadratic space $(\mathrm{M}_2(\mathbb{Q}),\mathrm{det})$ we show that the Kudla--Millson divisors $\mathcal{D}_n$ arise as the Fourier coefficients of a modular generating series with values in an appropriate substitute for the Chow group in our setup. This result is in the spirit of the emerging $p$-adic counterpart of the Kudla program  \cite{kudlaspecialcyclesand}. We recall the classical situation in a few words. Let $(V,q)$ be a rational quadratic space of real signature $(r,2)$. The real symmetric space associated to $(V,q)$ is endowed with a complex structure and quotients of the special orthogonal group $G=\mathrm{SO}_V$ give rise to a family of Shimura varieties. A substantial feature of this family is that they have many algebraic cycles. In fact, there are sub-Shimura varieties of the same type of all codimensions. For example, for every rational vector $v\in V$ of positive length there is a rational quadratic divisor associated to $v$ which induces a sub-Shimura variety $Z(v)$. This objects allow us to define cycles $Z(n)$ for integers $n\geq 0$. It is a theorem of Borcherds \cite{borcherdsgkz} that the generating series made up of the cycles $Z(n)$ is the Fourier expansion of a modular form valued in the first Chow group of an orthogonal Shimura variety. In analogy with those classical modularity results Darmon, Gehrmann and Lipnowski prove several modularity results for Kudla--Millson divisors in Section 3.3 of \cite{DGL}. But they do not cover the case $(\mathrm{M}_2(\mathbb{Q}),\mathrm{det})$. In our theorem the role of the Chow group is played by the cokernel of the divisor map, that is, we define 
\begin{align*}
\mathrm{CH}(\Gamma^2)\coloneq \mathrm{H}^2(\Gamma^2,\mathrm{Div}^\dagger_{\mathrm{rq}}(\mathcal{H}_p^2))/\mathrm{div}\left(\mathrm{H}^2(\Gamma^2,\mathcal{M}_{\mathrm{rq}}^\times)\right)
\end{align*} 
and denote by $\mathrm{CH}(\Gamma^2)_\mathbb{Q}$ the $\mathbb{Q}$-vector space $\mathrm{CH}(\Gamma^2)\otimes \mathbb{Q}$. We establish the following theorem:

\medskip\medskip
\noindent
{\bf Theorem D [Theorem \ref{modulatitytheorem}]}. {\em 
There is a unique constant term $\mathcal{D}_0\in \mathrm{CH}(\Gamma^2)_\mathbb{Q}$ such that the generating series
\begin{align*}
\mathcal{D}_0+\sum_{n\geq 1} \mathcal{D}_n\cdot q^n\in \mathrm{CH}(\Gamma^2)_\mathbb{Q}\left[[q]\right]
\end{align*}
is modular of weight two and level $\Gamma_0(p)$.
}

\medskip

The proof of this theorem heavily relies on a statement about invertible analytic functions on $\mathcal{H}_p^2$ and the careful analysis of the Hecke module structure of the Chow group together with Eichler--Shimura theory. \\
In the main body of the article we work with the more general setup of an indefinite quaternion algebra $B/\mathbb{Q}$ which is split at $p$ and a coordinate-free version of the $p$-adic upper half-plane $\mathcal{H}_p$. In the split case $B=\mathrm{M}_2(\mathbb{Q})$, we recover the statements made in the introduction. \\
To summarize, the article is organized as follows. We start by recalling the construction of divisor-valued cocycles of Darmon--Vonk and of Darmon--Gehrmann--Lipnowski, respectively, in Section \ref{section1}. Moreover, we study the specific cases of a three- and four-dimensional quadratic spaces arising from the quaternion algebra $B/\mathbb{Q}$. In Section \ref{section2} we discuss the notion of Hecke operators in our setup and determine a non-trivial ideal of the Hecke algebra that annihilates the divisor-valued classes $\mathcal{D}_\omega$ and $\mathcal{D}_1$ given above. Afterwards we prove the modularity theorem (Theorem \ref{modulatitytheorem}) in Section \ref{modularitxtheoremchapter}. Finally, Sections \ref{section4} and \ref{section5} are devoted to, carefully, constructing the functions $J_p$ and $\hat{J}_p$ and proving the main theorem (Theorem \ref{proptoproveconjecture}).
\subsection*{Acknowledgements}
This work is a generalization of my PhD thesis. I would like to thank my advisor Lennart Gehrmann for his guidance throughout my PhD. This research was supported through the program "Oberwolfach Research Fellows" by the Mathematisches Forschungsinstitut Oberwolfach in 2025 and I thank Mike Daas, H\aa vard Damm-Johnsen and Lennart Gehrmann for several inspiring discussions during our stay. I thank Claudia Alfes and H\aa vard Damm-Johnsen for helpful comments on an earlier version of this manuscript. \\ 
The research of the author is funded by the Deutsche Forschungsgemeinschaft (DFG, German Research Foundation) -- SFB-TRR 358/1 2023 -- 491392403.
\section*{Preliminaries and notations}
We will use the following notations and facts throughout the whole article.
\subsubsection*{General notations}
We fix a rational prime $p\in \mathbb{Z}$ and an algebraic closure $\overline{\mathbb{Q}}_p$ of $\mathbb{Q}_p$ once and for all. We normalize the $p$-adic absolute value on $\overline{\mathbb{Q}}_p$ such that $|p|_p=p^{-1}$. Let $\mathbb{C}_p$ be the completion of $\overline{\mathbb{Q}}_p$ with respect to the $p$-adic absolute value. We fix an algebraic closure $\overline{\mathbb{Q}}$ of the rational numbers and fix embeddings 
\begin{align*}
\mathbb{C}\hookleftarrow \overline{\mathbb{Q}}\hookrightarrow \mathbb{C}_p.
\end{align*}
Let $\mathcal{H}\coloneq \lbrace z\in \mathbb{C} : \mathrm{Im}(z)>0\rbrace$ be the complex upper half-plane. The space $\mathcal{H}$ is a Riemannian manifold via the hyperbolic metric $ds^2=\frac{dx^2+dy^2}{y^2}$. \\
If $V$ is a $\mathbb{Q}$-vector space and $L/\mathbb{Q}$ is a field extension, then we denote by $V_L$ the $L$-vector space $V\otimes _\mathbb{Q} L$. 
\subsubsection*{Quaternion algebras}
We follow the notations and conventions used in \cite{Voight}. Let $K$ be a field of $\mathrm{char}(K)\neq 2$. A $K$-algebra $B$ is a \emph{quaternion algebra} if $B$ admits a $K$-basis $(1,i,j,ij)$ such that 
\begin{align*}
i^2=a, \quad j^2=b \quad \mathrm{and} \quad ij=-ji,
\end{align*}
for some $a,b\in K^\times$. A quaternion algebra $B/K$ is equipped with an involution $\overline{(-)}\colon B\rightarrow B$ defined by 
\begin{align*}
\overline{t+b_0}\coloneq t-b_0 \quad \mathrm{for} \quad t\in K, b_0\in \mathrm{Span}_K(i,j,ij).
\end{align*}
The \emph{reduced trace} $\mathrm{Trd}\colon B\rightarrow K$ and the \emph{reduced norm} $\mathrm{Nrd}\colon B\rightarrow K$ on $B$ are defined by 
\begin{align*}
\mathrm{Trd}(\alpha)\coloneq \alpha+\overline{\alpha} \quad \mathrm{and} \quad \mathrm{Nrd}(\alpha)\coloneq \alpha\cdot \overline{\alpha} \quad \mathrm{for} \quad \alpha\in B,
\end{align*}
respectively. An element $b\in B$ is invertible if and only if its reduced norm is non-zero in which case $\mathrm{Nrd}(b)^{-1}\cdot \overline{b}\in B$ is the inverse of $b$.\\
If $\mathcal{O}\subset B$ is a unital subalgebra of $B$, then we denote by 
\begin{align*}
\mathcal{O}_1^\times\coloneq \lbrace b\in \mathcal{O}^\times : \mathrm{Nrd}(b)=1\rbrace\subset B^\times
\end{align*}
the group of norm one units in $\mathcal{O}$. \\
If $U\subset B$ is a $K$-linear subspace of $B$, then we denote by 
\begin{align*}
U_0\coloneq\mathrm{ker}\left(\mathrm{Trd}\colon U\rightarrow K\right)
\end{align*}
the subspace of reduced trace zero elements in $U$. We call the elements in $B_0$ \emph{pure quaternions}. \\
We endow $B$ with an action of the group $B^\times \times B^\times$ via
\begin{align*}
\gamma\centerdot v\coloneq \gamma_1\cdot v\cdot \gamma_2^{-1} \quad \mathrm{for} \quad \gamma=(\gamma_1,\gamma_2)\in B^\times \times B^\times, v\in B.
\end{align*}
The unit group $B^\times$ acts on the space $B_0$ of pure quaternions by conjugation, that is, we have an action
\begin{align*}
\gamma \centerdot v\coloneq (\gamma,\gamma)\centerdot v= \gamma\cdot v\cdot \gamma^{-1} \quad \mathrm{for} \quad \gamma\in B^\times, v\in B_0.
\end{align*}
If we denote by $(B^\times\times B^\times)_{\mathrm{Nrd}}\subset B^\times\times B^\times$ the group of pairs $\gamma=(\gamma_1,\gamma_2)\in B^\times\times B^\times$ such that $\mathrm{Nrd}(\gamma)\coloneq\mathrm{Nrd}(\gamma_1)=\mathrm{Nrd}(\gamma_2)$, then
\begin{align*}
\mathrm{Nrd}\left(\gamma\centerdot v\right)=\mathrm{Nrd}(v) \quad \mathrm{for \ all} \quad \gamma\in (B^\times\times B^\times)_{\mathrm{Nrd}}, v\in B.
\end{align*}
In particular, $\mathrm{Nrd}(\gamma\centerdot v)=\mathrm{Nrd}(v)$ for all $\gamma\in B^\times$ and $v\in B_0$.

\section{Divisor-valued cocycles}\label{section1}
This chapter recalls the construction of (quaternionic) divisor-valued cocycles introduced by Darmon--Vonk in \cite{DarmonVonksingularmoduli} respectively by Darmon--Gehrmann--Lipnowski in \cite{DGL}. 

\subsection{Symmetric spaces}
Let $(V,q)$ be a non-degenerate rational quadratic space of real signature $(r,s)$ and denote by $(-,-)\colon V\times V\rightarrow \mathbb{Q}$ the corresponding symmetric bilinear form such that $(v,v)=2\cdot q(v)$ for all $v\in V$. As usual, we view the (special) orthogonal group $\mathrm{O}_V$ (resp. ~$\mathrm{SO}_V$) as an algebraic group over $\mathbb{Q}$. If $K\subset \mathrm{O}_V(\mathbb{R})$ is a maximal compact subgroup, then the quotient $\mathrm{O}_V(\mathbb{R})/K$ is a symmetric space. A convenient realisation of this space is the so-called \emph{Grassmannian model} which parametrises maximal negative definite subspaces of $V_\mathbb{R}$. That is, we consider the space
\begin{align}
X_\infty\coloneq \lbrace Z\subset V_\mathbb{R}  : \mathrm{dim}_\mathbb{R}(Z)=s, \ q|_Z<0\rbrace,
\end{align}
viewed as a subspace of the Grassmannian of $s$-dimensional subspaces of $V_\mathbb{R}$. Witt's Theorem implies that the orthogonal group $\mathrm{O}_V(\mathbb{R})$ acts transitively on $X_\infty$. If we fix a subspace $Z\in X_\infty$ and consider the stabilizer $K\subset \mathrm{O}_V(\mathbb{R})$ of $Z$ in $\mathrm{O}_V(\mathbb{R})$, then $K$ is maximal compact and we deduce an identification of the symmetric space $\mathrm{O}_V(\mathbb{R})/K$ with $X_\infty$. It is a standard fact that the space $X_\infty$ is hermitian if and only if $r=2$ or $s=2$. In this case, the \emph{projective model} associated to $(V,q)$ encodes the complex structure. We denote by $\mathcal{Q}_V/\mathbb{Q}$ the quadric of isotropic lines in $V$. That is, $\mathcal{Q}_V$ is determined by 
\begin{align*}
\mathcal{Q}_V(L)=\lbrace v\in V_L \setminus \lbrace 0\rbrace : q(v)=0\rbrace/L^\times \subset \mathbb{P}_V(L)
\end{align*} 
for a field extension $L/\mathbb{Q}$. The variety $\mathcal{Q}_V$ is a closed algebraic subvariety of the projective space $\mathbb{P}_V$ and we denote by $[v]\in \mathcal{Q}_V$ the class of an isotropic vector $v\in V$. Assume that $s=2$. The complex space
\begin{align*}
\mathcal{K}\coloneq \lbrace [v]\in \mathcal{Q}_V(\mathbb{C}) : (v,v^\sigma)<0\rbrace\subset \mathcal{Q}_V(\mathbb{C}),
\end{align*}
where $v^\sigma\in V_\mathbb{C}$ is the vector obtained by complex conjugation, is a complex manifold consisting of two connected components $\mathcal{K}^+$ and $\mathcal{K}^-$ which are interchanged by complex conjugation. If we write a non-trivial isotropic vector $v\in V_\mathbb{C}$ as $v=v_1+i\cdot v_2$ with $v_1,v_2\in V_\mathbb{R}$, then $v_1$ and $v_2$ are of negative length and orthogonal to each other. Thus, we can define the map
\begin{align*}
\mathcal{K} \rightarrow X_\infty, \quad [v_1+i\cdot v_2]\mapsto \mathrm{Span}_\mathbb{R}(v_1,v_2)
\end{align*}
which is easily verified to be a well-defined two-to-one map. If we fix once and for all a connected component $\mathcal{K}^+\subset \mathcal{K}$, then we obtain a real analytic isomorphism 
\begin{align*}
\mathcal{K}^+\simeq X_\infty.
\end{align*}
While the orthogonal group $\mathrm{O}_V(\mathbb{R})$ still acts on the whole space $\mathcal{K}$, its action does not preserve the fixed component $\mathcal{K}^+$. In particular, an element $\gamma\in \mathrm{O}_V(\mathbb{R})$ preserves $\mathcal{K}^+$ if and only if its determinant (as an endomorphism on $V_\mathbb{R}$) coincides with its image under the real spinor norm $\mathrm{sn}_\mathbb{R}\colon \mathrm{O}_V(\mathbb{R})\rightarrow \mathbb{R}^\times/(\mathbb{R}^\times)^2$. We denote the group of such isometries by $\mathrm{O}_V^+(\mathbb{R})$.  \\
If $[v]\in \mathcal{K}$ is a line in $\mathcal{K}$, then $(w,v)\neq 0$ for all isotropic vectors $w\in V_\mathbb{R}$. Thus, we have an inclusion 
\begin{align*}
\mathcal{K}\subset \lbrace [v]\in \mathcal{Q}_V(\mathbb{C}) : (v,w)\neq 0 \ \mathrm{for \ all} \ [w]\in \mathcal{Q}_V(\mathbb{R})\rbrace
\end{align*}
which is an identity except in signature $(2,2)$ where the right hand side consists of four connected components. We note that the right hand side does neither depend on the signature of the quadratic space $(V,q)$ nor on the field extension $\mathbb{C}/\mathbb{R}$. Motivated by this observation, Darmon--Gehrmann--Lipnowski associate to a non-degenerate rational quadratic space of \emph{arbitrary signature} the $p$-adic space $X_p/\mathbb{C}_p$ defined by
\begin{align}\label{padicspacedefinition}
X_p\coloneq \lbrace [v]\in \mathcal{Q}_V(\mathbb{C}_p) : (v,w)\neq 0 \ \mathrm{for \ all} \ [w]\in \mathcal{Q}_V(\mathbb{Q}_p)\rbrace.
\end{align} 
In fact, $X_p$ is defined over $\mathbb{Q}_p$, however, we will work always over $\mathbb{C}_p$. 
\begin{remark}
We note:
\begin{enumerate}
\item[(i)]If $V_{\mathbb{Q}_p}$ is anisotropic, then $X_p=\mathcal{Q}_{V}$ as varieties over $\mathbb{C}_p$.
\item[(ii)]If $V$ is three-dimensional, then $X_p=\mathcal{Q}_V(\mathbb{C}_p)\setminus \mathcal{Q}_V(\mathbb{Q}_p)$. Moreover, if $(V,q)$ is of real signature $(1,2)$, then $\mathcal{K}=\mathcal{Q}_V(\mathbb{C})\setminus \mathcal{Q}_V(\mathbb{R})$. Indeed, two isotropic lines in a three-dimensional quadratic space are orthogonal to each other if and only if the lines coincide.
\end{enumerate}
\end{remark}
In this article, we are particularly interested in the cases where $(V,q)$ is given by
\begin{enumerate}
\item[\textbullet]an indefinite quaternion algebra $B/\mathbb{Q}$ which is split at $p$ endowed with the reduced norm map $\mathrm{Nrd}\colon B\rightarrow \mathbb{Q}$ and
\item[\textbullet] the space of pure quaternions $B_0$ endowed with the reduced norm map $\mathrm{Nrd}\colon B_0\rightarrow \mathbb{Q}$.
\end{enumerate}
For the moment, suppose that $B/\mathbb{Q}$ is an arbitrary quaternion algebra. We will from now on write $B$ and $B_0$ for the quadratic space $(B,\mathrm{Nrd})$ and $(B_0,\mathrm{Nrd})$, respectively. In both situations, the quadratic space is non-degenerate. More precisely, we have 
\begin{align*}
\mathrm{disc}(B)=1\in \mathbb{Q}/(\mathbb{Q}^\times)^2 \quad  \mathrm{and}  \quad \mathrm{disc}(B_0)=1\in \mathbb{Q}/(\mathbb{Q}^\times)^2.
\end{align*}
The real signatures of the spaces, clearly, depend on the splitting of $B$ at the infinite prime and we find that 
\begin{enumerate}
\item[\textbullet]the space $B$ is of real signature $(2,2)$ and the space $B_0$ is of real signature $(1,2)$, if $B$ is indefinite and 
\item[\textbullet] the space $B$ is of real signature $(4,0)$ and the space $B_0$ is of real signature $(3,0)$, if $B$ is definite.
\end{enumerate}
In particular, if $B$ is definite, then the real symmetric spaces attached to $B$ and $B_0$ consist of a single point. Conversely, if $B$ is indefinite, then both spaces are hermitian. \\
The action of the group $B^\times\times B^\times$ on $B$ by conjugation gives rise to a sequence 
\begin{align*}
1\rightarrow  \mathbb{Q}^\times \rightarrow (B^\times\times B^\times)_{\mathrm{Nrd}}\rightarrow \mathrm{SO}_B(\mathbb{Q})\rightarrow 1
\end{align*}
and, analogously, the action of $B^\times$ on $B_0$ by conjugation gives rise to a sequence
\begin{align*}
1\rightarrow \mathbb{Q}^\times \rightarrow B^\times \rightarrow \mathrm{SO}_{B_0}(\mathbb{Q})\rightarrow 1.
\end{align*}
If $\gamma\in B^\times$ is a unit, then its image under $\mathrm{sn}_\mathbb{R}$ norm w.r.t. ~$B_0$ equals its reduced norm. Consequently, if we denote by 
\begin{align}
G\coloneq \lbrace \gamma\in B^\times : \mathrm{Nrd}(\gamma)>0\rbrace 
\end{align}
the subgroup of elements of positive reduced norm, then 
\begin{align*}
G/\mathbb{Q}^\times\hookrightarrow \mathrm{O}_{B_0}^+(\mathbb{R})\cap \mathrm{SO}_{B_0}(\mathbb{R}).
\end{align*}
Furthermore, the map $G\times G\rightarrow  B_\mathbb{R}^\times \times B_\mathbb{R}^\times$ which maps a pair $(\gamma_1,\gamma_2)$ to the pair $\left(\gamma_1,r\cdot \gamma_2\right)$ where $r=(\mathrm{Nrd}(\gamma_1)/\mathrm{Nrd}(\gamma_2))^{1/2}$ induces a natural inclusion
\begin{align*}
(G\times G)/\mathbb{Q}^\times\hookrightarrow \mathrm{O}_B^+(\mathbb{R})\cap \mathrm{SO}_B(\mathbb{R})
\end{align*}
such that source and target coincide as maps on the quadric $\mathcal{Q}_B/\mathbb{R}$. \\
By definition, the real symmetric space $X_{\infty,B_0}$ associated to $B_0$ is a subspace of the real symmetric space $X_{\infty,B}$ associated to $B$. If $B$ is definite, then both $X_{\infty,B_0}$ and $X_{\infty,B}$ consist of a single point such that the inclusion is an isomorphism. To describe the relation between both spaces in the indefinite case and, simultaneously, the relation between the corresponding $p$-adic spaces, we need the following proposition:
\begin{proposition}\label{prosplit}
There is an isomorphism of $\mathbb{Q}$-varieties
\begin{align*}
\mathcal{Q}_B\rightarrow \mathcal{Q}_{B_0}\times \mathcal{Q}_{B_0}
\end{align*}
which commutes with the $B^\times \times B^\times$-action on both sides.
\end{proposition}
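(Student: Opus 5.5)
The plan is to write the isomorphism down intrinsically with linear algebra in $B$, so that it is visibly defined over $\mathbb{Q}$ and equivariant. All geometric claims (dimensions, transversality, bijectivity) are then checked after base change to a splitting field $L$ of $B$, where $B_L\simeq \mathrm{M}_2(L)$.

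\textbf{Step 1: the forward map.} For an isotropic vector $v\in B$, i.e.\ $\mathrm{Nrd}(v)=0$, consider the right ideal $vB$ and the left ideal $Bv$. Every element of $vB$ and of $Bv$ has reduced norm $0$, by multiplicativity of $\mathrm{Nrd}$. I claim that $(vB)_0$ and $(Bv)_0$ are lines. Over $L$ write $v=xy^{T}$ with $x,y\in L^2$ nonzero. Then $vB_L=\lbrace xz^T\rbrace$ is two-dimensional. Its trace-zero part is cut out by $z^Tx=0$, so it is the line spanned by $x(Jx)^T$ with $J=\left(\begin{smallmatrix}0&1\\-1&0\end{smallmatrix}\right)$, which is a nonzero nilpotent matrix. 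The argument for $Bv$ is symmetric. Hence
\begin{align*}
\Phi\colon \mathcal{Q}_B\rightarrow \mathcal{Q}_{B_0}\times \mathcal{Q}_{B_0},\qquad [v]\mapsto \left([(vB)_0],[(Bv)_0]\right)
\end{align*}
is well defined on points. Since the ranks of the linear maps involved are constant, namely the multiplication map $B\to B$, $b\mapsto vb$, of rank $2$ and $\mathrm{Trd}$ restricted to $vB$ of rank $1$, it is a morphism of varieties. It is defined over $\mathbb{Q}$ because it is given by $\mathbb{Q}$-linear constructions in $v$.

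\textbf{Step 2: equivariance.} Let $\gamma=(\gamma_1,\gamma_2)\in B^\times\times B^\times$. Then
\begin{align*}
(\gamma\centerdot v)B=\gamma_1 vB=\gamma_1(vB)\gamma_1^{-1},
\end{align*}
using $\gamma_2^{-1}B=B=B\gamma_1^{-1}$. Conjugation preserves the reduced trace, so
\begin{align*}
\left((\gamma\centerdot v)B\right)_0=\gamma_1\centerdot (vB)_0 .
\end{align*}
In the same way, $B(\gamma\centerdot v)=Bv\gamma_2^{-1}=\gamma_2(Bv)\gamma_2^{-1}$, and hence $\left(B(\gamma\centerdot v)\right)_0=\gamma_2\centerdot(Bv)_0$. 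Thus $\Phi$ intertwines the action of $\gamma$ on $\mathcal{Q}_B$ with the action $(\gamma_1,\gamma_2)$ by conjugation on the two factors.

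\textbf{Step 3: the inverse, and the main obstacle.} The natural candidate for the inverse is
\begin{align*}
([w_1],[w_2])\mapsto w_1B\cap Bw_2 .
\end{align*}
The naive guess $w_1w_2$ fails, because this product can vanish. The main point is to show that $w_1B\cap Bw_2$ is always a line consisting of isotropic vectors; this is the step I expect to be the real obstacle. Over $L$, write $w_1=x(Jx)^T$ and $w_2=(Jy)y^{T}$. Then $w_1B_L$ is the space of matrices with image contained in $Lx$, and $B_Lw_2$ is the space of matrices killing $\ker(y^T)$. Their intersection is exactly $L\cdot xy^T$, a rank-one, hence isotropic, line. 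The intersection has constant dimension $1$ for all inputs, so this defines a morphism, again over $\mathbb{Q}$.

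Finally, the description $v=xy^T\mapsto(x,y)$ up to scalars identifies both compositions with the identity on $L$-points. Since both maps are morphisms of varieties defined over $\mathbb{Q}$ whose base changes to $L$ are mutually inverse, they are mutually inverse isomorphisms already over $\mathbb{Q}$.
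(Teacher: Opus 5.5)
Your proof is correct, and your forward map is in fact the paper's. By parts (2) and (3) of Lemma \ref{langeslemmaamanfang} one has $(vB)_0=\ker(r_v)_0$ and $(Bv)_0=\ker(\ell_v)_0$, so $\Phi$ is exactly the paper's $[u]\mapsto(\ker(r_u)_0,\ker(\ell_u)_0)$. Your equivariance computation is the ``easy computation'' the paper leaves for part (5).

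The routes diverge when proving bijectivity.

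\emph{The paper's route.} It stays coordinate-free over the ground field. Injectivity comes from recovering $\mathrm{Span}(1,u)$ as $(\ker(\ell_u)_0\oplus\ker(r_u)_0)^\perp$; the two isotropic lines $u$ and $\overline{u}$ in it are then distinguished by the order of the pair. Surjectivity comes from the triviality of $\mathrm{disc}(B_0)$, which forces $(U_1\oplus U_2)^\perp$ to contain an isotropic vector $r+\alpha$ of nonzero trace. This argument only produces a bijection on points, and it leaves implicit why that bijection is an isomorphism of varieties.

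\emph{Your route.} You give the inverse by the closed formula $([w_1],[w_2])\mapsto w_1B\cap Bw_2$ and verify everything in matrix coordinates over a splitting field. Constant-rank (Grassmannian) arguments plus descent then show both maps are $\mathbb{Q}$-morphisms.

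\emph{Comparison.} Your approach settles the scheme-theoretic statement and treats the diagonal case $U_1=U_2$ uniformly, whereas the paper handles that case separately. The paper's approach avoids base change and builds the lemma it reuses later.

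One small point to tighten: mutual inverseness should be checked on $\overline{\mathbb{Q}}$-points rather than merely on $L$-points. This is harmless, since your computation is unchanged after enlarging $L$ and both varieties are smooth.
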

Basically, the desired splitting is a consequence of the next linear algebra lemma:
\begin{lemma}\label{langeslemmaamanfang}
Let $u\in B$ be a non-zero isotropic vector and consider the left- respectively right-multiplication maps 
\begin{align*}
\ell_u\colon B\rightarrow B, \quad x\mapsto u\cdot x \qquad \mathrm{and} \qquad r_u\colon B\rightarrow B, \quad x\mapsto x\cdot u.
\end{align*}
We have:
\begin{enumerate}
\item[(1)]The spaces $\mathrm{ker}(\ell_u)_0$ and $\mathrm{ker}(r_u)_0$ are isotropic lines in $B_0$.
\item[(2)]The vector $u\in B$ has reduced trace zero if and only if $\mathrm{ker}(\ell_u)_0=\mathrm{ker}(r_u)_0=\mathrm{Span}(u)$.
\item[(3)]We have
\begin{equation*}
\mathrm{ker}(\ell_u)=
   \begin{cases}
     \mathrm{ker}(\ell_u)_0\oplus \mathrm{Span}(\overline{u}), & \mathrm{if} \quad \mathrm{Trd}(u)\neq 0 \quad \mathrm{and}  \\
     u\cdot B, & \mathrm{if} \quad \mathrm{Trd}(u)= 0
     \end{cases}
\end{equation*}
and 
\begin{equation*}
u\cdot B= \mathrm{ker}(r_u)_0 \oplus \mathrm{Span}(u) \quad \mathrm{if} \quad \mathrm{Trd}(u)\neq 0.
\end{equation*}
\item[(4)]If $\mathrm{Trd}(u)\neq 0$, then we have
\begin{align*}
\mathrm{Span}(1,u)^\perp=\mathrm{ker}(\ell_u)_0\oplus \mathrm{ker}(r_u)_0.
\end{align*}
\item[(5)]Let $\gamma=(\gamma_1,\gamma_2)\in B^\times \times B^\times$. Then
\begin{align*}
\mathrm{ker}(r_{\gamma\centerdot u})_0=\gamma_1\centerdot \mathrm{ker}(r_u)_0 \qquad \mathrm{and} \qquad  \mathrm{ker}(\ell_{\gamma\centerdot u})_0=\gamma_2\centerdot \mathrm{ker}(\ell_u)_0.
\end{align*}
\end{enumerate}
\begin{proof}
First of all, observe that $\mathrm{ker}(r_u)=\overline{\mathrm{ker}(\ell_{\overline{u}})}$
such that 
\begin{align*}
\mathrm{ker}(r_u)_0=\mathrm{ker}(\ell_{\overline{u}})_0=\lbrace b\in B_0 :  \ell_u(b)=\mathrm{Trd}(u)\cdot b\rbrace.
\end{align*}
Note that the second equality follows from the obvious identity $\ell_{\overline{u}}=\mathrm{Trd}(u)\cdot \mathrm{id}-\ell_u$.\\ 
Clearly, $\mathrm{ker}(\ell_u)_0$ must be isotropic and, hence, $\mathrm{ker}(\ell_u)_0$ is either trivial or a line. But it can not be trivial, because $\ell_u(B_0)$ is (as a totally isotropic space) at most two-dimensional. Statement (1) follows. Statement (2) is an immediate consequence of statement (1) and the identity in the second line of this proof.     \\
If $u$ has non-vanishing reduced trace, then $\overline{u}\notin \mathrm{ker}(\ell_u)_0$ such that $\mathrm{ker}(\ell_u)_0\oplus \mathrm{Span}(\overline{u})$ is a two-dimensional subspace of $\mathrm{ker}(\ell_u)$. However, $\mathrm{ker}(\ell_u)$ is at most two-dimensional and we derive the equality of these spaces. Analogously, $u\notin \mathrm{ker}(r_u)_0$ such that $\mathrm{ker}(r_u)_0\oplus \mathrm{Span}(u)$ is a two-dimensional subspace of $\ell_u(B)$ which is, again, at most two-dimensional. The first part of statement (3) follows. The second part is obvious from the identity $u^2=0$, which holds if $u$ has vanishing reduced trace and reduced norm.\\
If $u$ has non-vanishing reduced trace, then
\begin{align*}
\mathrm{Span}(1,u)^\perp=\lbrace b\in B_0 :  \mathrm{Trd}(u\cdot b)=0\rbrace
\end{align*}
is two-dimensional and contains both $\mathrm{ker}(\ell_u)_0$ and $\mathrm{ker}(r_u)_0$. Statement (2) implies that these lines are different and we derive statement (4). Statement (5) is an easy computation and left to the reader.
\end{proof}
\end{lemma}
We can now prove Proposition \ref{prosplit}:
\begin{proof}[Proof of Proposition \ref{prosplit}:]
We claim that the map 
\begin{align}
U=[u]\mapsto \left(\mathrm{ker}(r_u)_0,\mathrm{ker}(\ell_u)_0\right)
\end{align}
is a desired isomorphism. \\
The injectivity is easily deduced from statement (2) and statement (4) in Lemma \ref{langeslemmaamanfang}.\\
For the surjectivity, consider two isotropic lines $U_1,U_2\subset B_0$. In case $U\coloneq U_1=U_2$, the preimage is given by $U\subset B$ and we are reduced to consider the case $U_1\neq U_2$. Being different isotropic lines in $B_0$, the lines $U_1$ and $U_2$ can not be orthogonal to each other. In particular, the orthogonal complement of $U_1\oplus U_2$ in $B_0$ is spanned by a vector $\alpha\in B_0$ satisfying $-\mathrm{Nrd}(\alpha)=\mathrm{disc}(B_0)=1 \ \mathrm{mod} \ (\mathbb{Q}^2)^\times$. Thus, $\alpha^2=-\mathrm{Nrd}(\alpha)$ must be a square in $\mathbb{Q}$, say $\alpha^2=r^2$ with $r\in \mathbb{Q}^\times$. Now, the orthogonal complement $(U_1\oplus U_2)^\perp$ of $U_1\oplus U_2$ in $B$ is spanned by $1$ and $\alpha$ and, hence, contains the isotropic vector $u\coloneq r+\alpha$ which is of reduced trace $2\cdot r\neq 0$. But then 
\begin{align*}
(U_1\oplus U_2)^\perp=\mathrm{Span}(1,u)=\left(\mathrm{ker}(\ell_u)_0\oplus \mathrm{ker}(r_u)_0\right)^\perp
\end{align*}
by statement (4). Finally, this implies that either $U_1=\mathrm{ker}(\ell_u)_0$ and $U_2=\mathrm{ker}(r_u)_0$ or $U_1=\mathrm{ker}(\ell_{\overline{u}})_0$ and $U_2=\mathrm{ker}(r_{\overline{u}})_0$ and the surjectivity follows. The compatibility with the $B^\times \times B^\times$-action on both sides is statement (5).
\end{proof}
Proposition \ref{prosplit} implies immediately:
\begin{corollary}\label{propositionA}
We have:
\begin{enumerate}
\item[(1)]If $B$ is an indefinite quaternion algebra, then there is a real analytic isomorphism 
\begin{align*}
X_{\infty,B} \ \simeq \  X_{\infty, B_0} \times X_{\infty,B_0}
\end{align*}
which commutes with the $G\times G$-action on both sides.
\item[(2)]For any quaternion algebra $B/\mathbb{Q}$ which is split at $p$ there is an isomorphism of rigid analytic varieties
\begin{align*}
X_{p,B} \ \simeq \ X_{p,B_0}\times X_{p,B_0}
\end{align*}
which commutes with the $B^\times \times B^\times$-action on both sides.
\end{enumerate}
\end{corollary}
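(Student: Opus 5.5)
The plan is to restrict the isomorphism $\Phi\colon \mathcal{Q}_B\to \mathcal{Q}_{B_0}\times\mathcal{Q}_{B_0}$, $[u]\mapsto(\mathrm{ker}(r_u)_0,\mathrm{ker}(\ell_u)_0)$, of Proposition \ref{prosplit} to the relevant open subsets. Equivariance will then be automatic, by statement (5) of Lemma \ref{langeslemmaamanfang}. The whole problem reduces to showing that $\Phi$ carries $X_B$ onto $X_{B_0}\times X_{B_0}$, both in the complex setting and in the $p$-adic setting. The key identity I would establish first is that, for isotropic $u,w\in B_L$, the pairing $(u,w)$ vanishes if and only if one of the coordinates of $\Phi([u])$ is orthogonal to the corresponding coordinate of $\Phi([w])$. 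Since distinct isotropic lines in the three-dimensional space $B_0$ are never orthogonal, this is equivalent to the statement that the two points share a coordinate.

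To prove the identity I would argue geometrically, via the two rulings of the quadric surface $\mathcal{Q}_B$. By statement (3) of Lemma \ref{langeslemmaamanfang}, the fibres of the two projections are exactly the isotropic planes $u\cdot B$ and $B\cdot u$, after projectivization. Two distinct points of a smooth quadric surface are orthogonal if and only if the line joining them lies on the quadric, that is, they lie on a common ruling line. This happens if and only if one coordinate of $\Phi$ agrees.

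\emph{Part (2).} By the Remark, $X_{p,B_0}=\mathcal{Q}_{B_0}(\mathbb{C}_p)\setminus\mathcal{Q}_{B_0}(\mathbb{Q}_p)$. A point $[u]\in \mathcal{Q}_B(\mathbb{C}_p)$ fails to lie in $X_{p,B}$ exactly when some $[w]\in\mathcal{Q}_B(\mathbb{Q}_p)$ has $(u,w)=0$. By the identity, this means that a coordinate of $\Phi([u])$ equals a coordinate of some $\mathbb{Q}_p$-rational point. Because $\Phi$ is defined over $\mathbb{Q}$ and bijective on $\mathbb{Q}_p$-points, this is the same as saying that a coordinate of $\Phi([u])$ is $\mathbb{Q}_p$-rational. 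Here one uses that $B$ is split at $p$, so that $\mathcal{Q}_{B_0}(\mathbb{Q}_p)$ is nonempty; every rational point of $\mathcal{Q}_{B_0}$ then occurs as a coordinate of some rational point of $\mathcal{Q}_B$, for instance of $(U_1,U_1)$. Hence $\Phi(X_{p,B})=X_{p,B_0}^2$. Since $\Phi$ is an isomorphism of varieties, its restriction is an isomorphism of the associated rigid analytic admissible opens.

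\emph{Part (1).} The same argument over $\mathbb{R}$ shows that $\Phi$ maps the four-component set $\{[v]: (v,w)\neq0\ \forall [w]\in\mathcal{Q}_B(\mathbb{R})\}$ onto $(\mathcal{Q}_{B_0}(\mathbb{C})\setminus\mathcal{Q}_{B_0}(\mathbb{R}))^2=(\mathcal{K}_{B_0}^+\sqcup\mathcal{K}_{B_0}^-)^2$. Its subset $\mathcal{K}_B$ is the union of two of the four components, and these two are exchanged by complex conjugation. Since $\Phi$ commutes with conjugation, $\mathcal{K}_B$ maps either to $\mathcal{K}^+\times\mathcal{K}^+\sqcup\mathcal{K}^-\times\mathcal{K}^-$ or to $\mathcal{K}^+\times\mathcal{K}^-\sqcup\mathcal{K}^-\times\mathcal{K}^+$. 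To decide which, I would test a single explicit point in $\mathrm{M}_2(\mathbb{R})$; if necessary, I would replace the second coordinate by its conjugate. Choosing the component $\mathcal{K}_B^+$ mapping to $\mathcal{K}^+\times\mathcal{K}^+$ then gives a holomorphic, in particular real analytic, isomorphism
\begin{align*}
X_{\infty,B}\simeq\mathcal{K}_B^+\simeq\mathcal{K}_{B_0}^+\times\mathcal{K}_{B_0}^+\simeq X_{\infty,B_0}\times X_{\infty,B_0}.
\end{align*}
This is $G\times G$-equivariant, because $G$ preserves $\mathcal{K}^+_{B_0}$ and the normalized action of $G\times G$ preserves $\mathcal{K}^+_B$, as recorded before Proposition \ref{prosplit}.

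The main obstacle is the component bookkeeping in (1), together with the claim that the action normalized by $r$ agrees with the quotient action on the quadric. The algebraic identity about orthogonality and shared rulings is routine.
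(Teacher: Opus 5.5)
Your proposal is correct and follows the paper's route. The paper says the corollary follows ``immediately'' by restricting the isomorphism $\mathcal{Q}_B\simeq\mathcal{Q}_{B_0}\times\mathcal{Q}_{B_0}$ of Proposition \ref{prosplit}, and your criterion ($(u,w)=0$ iff $\Phi([u])$ and $\Phi([w])$ share a coordinate), together with the component bookkeeping over $\mathbb{R}$, supplies exactly the details the paper leaves implicit (minor point: your $(U_1,U_1)$ argument never actually uses that $\mathcal{Q}_{B_0}(\mathbb{Q}_p)$ is nonempty).
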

We remind ourselves that if $K$ is any base field with $\mathrm{char}(K)\neq 2$, then a quaternion algebra $B/K$ is either a division algebra or is isomorphic to the quaternion algebra of $2\times 2$-matrices $\mathrm{M}_2(K)$. In the latter case, the quadric $\mathcal{Q}_{B_0}/K$ of isotropic lines can be identified with the projective line $\mathbb{P}^1$ over $K$. Under this identification, the action of $\GL_2$ on $\mathcal{Q}_{B_0}$ by conjugation corresponds to M\"obius transformations on $\mathbb{P}^1$. Observe that an isomorphism $B \simeq \mathrm{M}_2(K)$ is far from being unique. However, the \emph{Skolem--Noether Theorem} implies that two different isomorphisms differ by conjugation with a matrix in $\GL_2(K)$. Thus, the two identifications of $\mathcal{Q}_{B_0}$ with $\mathbb{P}^1$ differ by a M\"obius transformation with a matrix in $\GL_2(K)$. Thus, if $B/\mathbb{Q}$ is an indefinite quaternion algebra which is split at $p$, then there are non-canonical isomorphisms 
\begin{align*}
X_{\infty,B_0} \simeq  \mathcal{H} \qquad \mathrm{and} \qquad X_{p,B_0}  \simeq  \mathbb{P}^1(\mathbb{C}_p)\setminus \mathbb{P}^1(\mathbb{Q}_p),
\end{align*}
where $\mathbb{P}^1(\mathbb{C}_p)\setminus \mathbb{P}^1(\mathbb{Q}_p)$ is the so-called \emph{Drinfeld upper half-plane}. Motivated by this isomorphisms, we put 
\begin{align}
\mathcal{H}_\infty\coloneq X_{\infty,B_0} \qquad \mathrm{and} \qquad \mathcal{H}_p\coloneq X_{p,B_0}.
\end{align}

\subsection{Special cycles and divisor-valued cocycles}
The central objects in the definition of the desired divisor-valued cocycles are certain special cycles on $X_\infty$ (resp. ~on $X_p$) that are defined as follows: ~Let $(V,q)$ be a non-degenerate rational quadratic space of real signature $(r,s)$. If $v\in V_\mathbb{R}$ is a vector of positive length, then we define the special cycle
\begin{align}\label{deltavinfty}
\Delta_{v,\infty}\coloneq \lbrace Z\in X_\infty : Z\subset \mathrm{Span}_\mathbb{R}(v)^\perp \rbrace.
\end{align}
In other words, $\Delta_{v,\infty}$ parametrises the maximal negative definite subspaces of the real quadratic space $\mathrm{Span}_\mathbb{R}(v)^\perp$ of signature $(r-1,s)$. The orthogonal group $\mathrm{O}_V(\mathbb{R})$ naturally acts on such cycles and for any $\gamma\in \mathrm{O}_V(\mathbb{R})$ we have
\begin{align*}
\gamma\cdot \Delta_{v,\infty}=\Delta_{\gamma\cdot v,\infty}.
\end{align*}
Later, we will define an \emph{intersection number} of certain $s$-chains on the real symmetric space $X_\infty$ with the special cycles $\Delta_{v,\infty}$. Without going into detail, one can endow both space $X_{\infty}$ and $\Delta_{v,\infty}$ with an \emph{orientation}, where the latter choice depends on the vector $v$, which determines an identification 
\begin{align}\label{idenfifictionwithzgeneralcase}
\widetilde{\mathrm{H}}_{s-1}\left(X_\infty\setminus \Delta_{v,\infty}\right)=\mathbb{Z}.
\end{align}
See \cite{DGL}, Section 2.2.4, Proposition 2.4 and the discussion afterwards. The identification in equation \eqref{idenfifictionwithzgeneralcase} is compatible with the action of the spin group $\mathrm{Spin}_V(\mathbb{R})\coloneq \mathrm{SO}_V(\mathbb{R})\cap \mathrm{O}_V^+(\mathbb{R})$. That means, if $v\in V_\mathbb{R}$ is a vector of positive length and $\gamma\in \mathrm{Spin}_V(\mathbb{R})$ is an isometry of real spinor norm one, then the induced map 
\begin{align*}
\gamma_\star\colon \widetilde{\mathrm{H}}_{s-1}\left(X_\infty\setminus \Delta_{v,\infty}\right)\rightarrow \widetilde{\mathrm{H}}_{s-1}\left(X_\infty\setminus \Delta_{\gamma \cdot v,\infty}\right)
\end{align*}
is the identity after identifying both groups with $\mathbb{Z}$. See \cite{DGL}, Section 2.2.4, Lemma 2.5.\\
Recall that if $s=2$, then we identified the symmetric space $X_\infty$ with the complex space $\mathcal{K}^+$. In terms of this model, the special cycle $\Delta_{v,\infty}$ is identifies with the set $\lbrace [u]\in \mathcal{K}^+ : (u,v)=0 \rbrace$. By analogy, Darmon--Gehrmann--Lipnowski define for an anisotropic vector $v\in V_{\mathbb{Q}_p}$ the $p$-adic cycle
\begin{align}
\Delta_{v,p}\coloneq\lbrace [u]\in X_p :  (u,v)=0 \rbrace\subset X_p.
\end{align}
By definition, $\Delta_{v,p}$ has codimension one in $X_p$, so that $\Delta_{v,p}$ is a Weil divisor on $X_p$. The orthogonal group $\mathrm{O}_V(\mathbb{Q}_p)$ acts on such divisors and for any $\gamma\in \mathrm{O}_V(\mathbb{Q}_p)$ we have
\begin{align*}\
\gamma\cdot \Delta_{v,p}=\Delta_{\gamma\cdot v,p}.
\end{align*}
We call a divisor $\Delta_{v,p}$ \emph{rational quadratic} if the vector $v$ is an element of the set $V_+\subset V$ of rational vectors of positive length. Moreover, we denote by
\begin{align}
\mathrm{Div}^\dagger_{\mathrm{rq}}(X_p)\coloneq \bigg\{ \mathcal{D} \in  \mathrm{Div}^\dagger(X_p) : \mathcal{D}=\sum_{v\in V_+} n_v\cdot \Delta_{v,p}\bigg\}
\end{align}
the group of Weil divisors which are supported on rational quadratic divisors and call the elements \emph{rational quadratic divisors}, too. The group $\mathrm{Div}^\dagger_{\mathrm{rq}}(X_p)$ is, naturally, an $\mathrm{O}_V(\mathbb{Q})$-module such that we can consider its group cohomology groups. We fix a $\mathbb{Z}[1/p]$-lattice $L\subset V$ on which the quadratic form $q$ is $\mathbb{Z}[1/p]$-valued and consider a $p$-arithmetic subgroup $\Gamma\subset \mathrm{SO}(L)$ that is contained in the spin group $\mathrm{Spin}_V(\mathbb{R})$. Darmon--Gehrmann--Lipnowski introduce the complex $\mathcal{C}_\bullet(X_\infty)$ which is defined to be the subcomplex of the singular chain complex $C_\bullet(X_\infty)$ with
\begin{equation}\label{complexgeneraldef}
\mathcal{C}_n(X_\infty)\coloneq
   \begin{cases}
     C_n(X_\infty\setminus\bigcup_{w\in V_+} \Delta_{w,\infty}) , & \mathrm{if} \ n<s, \\
     \lbrace c\in C_s(X_\infty) : \partial_s(c)\in \mathcal{C}_{s-1}(X_\infty)\rbrace, & \mathrm{if} \ n=s \ \mathrm{and} \\
     C_n(X_\infty), & \mathrm{if} \ n\geq s+1.
     \end{cases}
\end{equation}
The complex $\mathcal{C}_\bullet(X_\infty)$ is a complex of $\mathbb{Z}[\Gamma]$-modules. Furthermore, for every vector $w\in V_+$ the image of an $s$-chain in $\mathcal{C}_s(X_\infty)$ under the boundary map is an $(s-1)$-cycle on the complement $X_\infty\setminus \Delta_{w,\infty}$. Crucially, the inclusion $\mathcal{C}_\bullet(X_\infty)\hookrightarrow C_\bullet(X_\infty)$ is a quasi-isomorphism such that $\mathcal{C}_\bullet(X_\infty)$ is a resolution of $\mathbb{Z}$ over $\mathbb{Z}[\Gamma]$. See \cite{DGL}, Section 2.4.1, Proposition 2.17. \\
In this setup, if $c\in \mathcal{C}_s(X_\infty)$ is an $s$-chain, then we define the signed intersection number of $c$ with $\Delta_{w,\infty}$ as the image of the cycle $\partial_s(c)$ in the reduced homology group $\widetilde{\mathrm{H}}_{s-1}(X_\infty\setminus \Delta_{w,\infty})$ which we identified with $\mathbb{Z}$, i.e. ~we define
\begin{align*}
c\cap \Delta_{w,\infty}\coloneq [\partial_s(c)]\in \widetilde{\mathrm{H}}_{s-1}(X_\infty\setminus \Delta_{w,\infty}).
\end{align*}
Finally, if $v\in V_+$, then we can consider the map
\begin{align}\label{definitiondivcocyclesum}
\mathcal{D}_v\colon \mathcal{C}_2(\mathcal{H}_\infty)\rightarrow \prod_{w\in V_+} \mathbb{Z}\cdot \Delta_{w,p}, \qquad c\mapsto \sum_{w\in \Gamma v} c\cap \Delta_{w,\infty}\cdot \Delta_{w,p}.
\end{align} 
By \cite{DGL}, Section 2.3.3, Lemma 2.14, the map $\mathcal{D}_v$ takes values in the group of Weil divisors on $X_p$. Clearly, $\mathcal{D}_v$ is a homomorphism of groups. Moreover, $\mathcal{D}_v$ is $\Gamma$-equivariant because of the compatibility of the identification \ref{idenfifictionwithzgeneralcase} with the action of the spin group. Finally, $\mathcal{D}_v$ is an $s$-cocycle because the composition $\partial_{s}\circ\partial_{s+1}$ is the zero map and, thus, defines a class
\begin{align*}
[\mathcal{D}_v]\in \mathrm{H}^s\left(\Gamma,\mathrm{Div}^\dagger_{\mathrm{rq}}(X_p)\right).
\end{align*}
Note that, purely by its definition, the cocycle $\mathcal{D}_v$ is independent of the choice of representative of the $\Gamma$-orbit of $v$.\\
We want to emphasize that the resolution $\mathcal{C}_\bullet(X_\infty)$ is, in general, not a projective resolution of $\mathbb{Z}$ over $\mathbb{Z}[\Gamma]$. But, the \emph{fundamental lemma of homological algebra} (see for example \cite{brown}, Chapter I.7, Lemma 7.4) implies that if $P_\bullet$ is a projective resolution of $\mathbb{Z}$ over $\mathbb{Z}[\Gamma]$, then there exists a (up to homotopy) unique, albeit non-explicit, quasi-isomorphism $p_\bullet\colon P_\bullet\rightarrow \mathcal{C}_\bullet(X_\infty)$ extending the augmentation map. The class $[\mathcal{D}_v]$ is then represented by the cocycle
\begin{align*}
\mathcal{D}_v\circ p_s\in \mathrm{Hom}_\Gamma\left(P_s,\mathrm{Div}^\dagger_{\mathrm{rq}}(X_p)\right).
\end{align*}
\subsubsection{The three-dimensional case}
Recall that if $B$ is definite, then every reduced trace zero element has positive reduced norm such that the real symmetric space $\mathcal{H}_\infty$ consists of a single point. In this case, any special cycle $\Delta_{v,\infty}$ consists of this point, too. However, regardless of the splitting behaviour at infinity, the $p$-adic cycles $\Delta_{v,p}$ can be described as follows. Let $v\in B_0$ be anisotropic and denote by $K_v/\mathbb{Q}$ the splitting field of the polynomial $X^2+\mathrm{Nrd}(v)\in \mathbb{Q}[X]$. Then $\Delta_{v,p}$ is empty if and only if $K_v=\mathbb{Q}$ or $p$ is split in $K_v$. Conversely, $\Delta_{v,p}=\omega+\omega^\prime$ where $\omega,\omega^\prime\in \mathcal{Q}_{B_0}(K_v)\setminus \mathcal{Q}_{B_0}(\mathbb{Q})$ are certain points interchanged by the non-trivial element in the Galois group $\mathrm{Gal}(K_v/\mathbb{Q})$. We call the points that belong to a special cycle $\Delta_{v,p}$ for an anisotropic vector $v\in B_0$ \emph{quadratic points}. Moreover, we say that $\omega$ is an \emph{RM point} if the associated quadratic field $K_v$ is real quadratic. It is not hard to show that a point $\omega\in \mathcal{H}_{p}$ is a quadratic point if and only if $\omega$ is defined over a quadratic field $K/\mathbb{Q}$ in which $p$ is non-split. Furthermore, one notes that RM points on $\mathcal{H}_{p}$ exist if and only if $B$ is indefinite because definite quaternion algebras can not be split by real quadratic fields. \\
The action of the special orthogonal group $\mathrm{SO}_{B_0}(\mathbb{Q})\simeq B^\times/\mathbb{Q}^\times$ on $\mathcal{H}_{p}$ preserves quadratic points and, thus, it makes sense to study the stabilizer of such points under this action: ~Let $K/\mathbb{Q}$ be a quadratic field which splits $B$ and let $\omega\in \mathcal{Q}_{B_0}(K)$ be a point not defined over $\mathbb{Q}$. A unit $\gamma\in B^\times$ acts on the line $\omega$ in two possible ways:
\begin{enumerate}
\item[\textbullet]we can consider the line $\gamma\centerdot\omega$ where $\gamma$ acts by conjugation and
\item[\textbullet]the line $\gamma\cdot \omega$ where $\gamma$ acts via the multiplication in $B_K$.
\end{enumerate}
The next lemma shows that the stabilizer of $\omega$ is independent of the actions:
\begin{lemma}
Let $\gamma\in B^\times$ be a unit. Then $\gamma\centerdot\omega=\omega$ if and only if $\gamma\cdot \omega=\omega$.
\begin{proof}
Since $\omega$ is of reduced trace zero, we have $\overline{\omega}=\omega$. Hence, if $\gamma\cdot \omega=\omega$, then
\begin{align*}
\gamma \centerdot \omega=\gamma\cdot\omega\cdot \gamma^{-1}=\gamma\cdot \omega\cdot \overline{\gamma}=\gamma\cdot \overline{\gamma\cdot \omega}=\omega.
\end{align*}
Conversely, if $\gamma\centerdot\omega=\omega$, then $\gamma\cdot \omega=\omega\cdot \gamma$. Let $u\in (B_0)_K$ be an isotropic vector that spans the line $\omega$ and consider the left- and right-multiplication map $\ell_{\gamma\cdot u}\colon B\rightarrow B$ and $ r_{\gamma\cdot u}\colon B\rightarrow B$, respectively. Since $\gamma$ is a unit in $B$ and $\omega$ is of reduced trace zero, we have
\begin{align*}
\mathrm{ker}(\ell_{\gamma\cdot u})_0=\mathrm{ker}(\ell_u)_0=\omega \qquad   \mathrm{and} \qquad \mathrm{ker}(r_{\gamma\cdot u})_0=\mathrm{ker}(r_{u\cdot \gamma})_0=\mathrm{ker}(r_u)_0=\omega
\end{align*}
where the first equality in the second equation follows from the assumption $\gamma\cdot \omega=\omega\cdot \gamma$. Thus, $\gamma\cdot \omega$ is of reduced trace zero and $\gamma\cdot \omega=\mathrm{ker}(\ell_{\gamma\cdot u})_0=\omega$ as desired.
\end{proof}
\end{lemma}
\begin{corollary}\label{corollary1.15}
Let $\omega\in \mathcal{H}_{p}$ be a quadratic point with field of definition $K/\mathbb{Q}$. There exists a canonical isomorphism $\mathrm{Stab}_{B^\times}(\omega)\simeq K^\times$ which commutes with the reduced norm in $B$ and the field norm in $K/\mathbb{Q}$.
\begin{proof}
Let $u\in (B_0)_K$ be an isotropic vector which spans the line $\omega$ and consider the ring
\begin{align}
K_\omega\coloneq \lbrace \gamma\in B  :  \gamma\cdot u=\lambda_\gamma\cdot u \ \mathrm{for \ a \ unique \ number} \ \lambda_\gamma\in K \rbrace.
\end{align}
Note that neither the ring $K_\omega$ nor the number $\lambda_\gamma\in K$ depends on the chosen vector $u$ spanning $\omega$.
The homomorphism of rings $K_\omega\rightarrow K$ which maps any element $\gamma\in K_\omega$ to the number $\lambda_\gamma\in K$ is injective. Indeed, if there would be a non-trivial element $\gamma\in K_\omega$ such that $\gamma\cdot u=0$, then $\gamma$ must be isotropic and $u\in \mathrm{ker}(\ell_\gamma)_0$. However, this line is defined over $\mathbb{Q}$ because $\gamma$ is defined over $\mathbb{Q}$, contradicting the assumption that $\omega$ is a quadratic point. The field of rational numbers is, naturally, contained in $K_\omega$ such that $K_\omega$ can be viewed as a subspace of the two-dimensional $\mathbb{Q}$-vector space $K$. It follows that $K_\omega$ equals the field of rational numbers or the field $K$. However, in the proof of \cite{Voight}, Chapter 5.4 Lemma 5.4.7 Voight constructs an element $\gamma\in K_\omega$ which is not an element of $\mathbb{Q}$ such that the inclusion $K_\omega\rightarrow K$ must be an isomorphism. The previous lemma implies that $\mathrm{Stab}_{B^\times}(\omega)=K_\omega^\times$ and, hence, that $\mathrm{Stab}_{B^\times}(\omega)\simeq K^\times$. The compatibility with the reduced norm and the field norm follows from the uniqueness of a standard involution on the quadratic $\mathbb{Q}$-algebra $K_\omega\simeq K$. See \cite{Voight}, Chapter 3.4, Lemma 3.4.2.
\end{proof} 
\end{corollary}
We assume for the rest of the article that $B$ is an indefinite quaternion algebra which is split at $p$. Denote by $R\subset B$ the (up to conjugation) unique maximal order in $B$ and by $\Gamma\coloneq R[1/p]^\times_1$ the group of norm one units in $R[1/p]$. Let $\omega\in \mathcal{H}_{p}$ be an RM point with field of definition $K$. The ring $\mathcal{O}_\omega\coloneq  R[1/p]  \cap  K_\omega$ is isomorphic to a $\mathbb{Z}[1/p]$-order in the real quadratic field $K$ such that by Dirichlet's unit theorem its unit group $\mathcal{O}_\omega^\times$ is up to two-torsion infinite cyclic. Since the analogous statement for the unit group of the corresponding $\mathbb{Z}$-order $R\cap K_\omega$ holds, we see that the reduced norm of an element in $\mathcal{O}_\omega^\times$ equals $\pm 1$ and, hence, that the stabilizer $\Gamma_\omega$ of $\omega$ in $\Gamma$ is of index at most two in $\mathcal{O}_\omega^\times$. In particular, we deduce that 
\begin{align}
\Gamma_\omega\simeq \mathbb{Z}  \oplus \mathbb{Z}/2\mathbb{Z}.
\end{align}
There is precisely one generator $\gamma_\omega\in \Gamma_\omega$ whose associated unit $\varepsilon_\omega\coloneq \lambda_{\gamma_\omega}\in K^\times$ belongs to the interval $(1,\infty)$. We call $\gamma_\omega$ the \emph{automorph of $\omega$}.
\begin{remark}
Let $\gamma\in B^\times$ be an element that normalizes $\Gamma$ and consider the RM point $\tau\coloneq \gamma\centerdot \omega\in \mathcal{H}_{p}$. One verifies easily that the automorph of $\tau$ is the element $\gamma\cdot \gamma_\omega\cdot \gamma^{-1}$ and  that the associated unit $\varepsilon_{\tau}$ equals $\varepsilon_\omega$.
\end{remark}
If $\omega\in \mathcal{H}_{p}$ is an RM point, then both $\omega$ and its Galois conjugate $\omega^\prime$ also define elements of $\mathcal{Q}_{B_0}(\mathbb{R})$ and, thus, can be interpreted as boundary points of $\mathcal{H}_{\infty}\subset \mathcal{Q}_{B_0}(\mathbb{C})\setminus \mathcal{Q}_{B_0}(\mathbb{R})$. We denote by $\Delta_{\omega,\infty}\subset \mathcal{H}_\infty$ the unique geodesic on $\mathcal{H}_\infty$ which connects the boundary points $\omega^\prime$ and $\omega$. Furthermore, we endow $\Delta_{\omega,\infty}$ with an orientation by saying that it starts in $\omega^\prime$ and ends in $\omega$. If we consider the rational quadratic space $(B_0,-\mathrm{Nrd})$ of real signature $(2,1)$, then its associated real symmetric space $X_{\infty}$ is canonically identified with the real symmetric space $\mathcal{H}_\infty$. Moreover, its $p$-adic space $X_p$ equals the space $\mathcal{H}_{p}$. In this setup, any RM point can be identified with a special cycle on $X_\infty$ and under the identification of $X_\infty$ with $\mathcal{H}_\infty$ this cycle equals $\Delta_{\omega,\infty}$. In other words, $\Delta_{\omega,\infty}$ \emph{is} a special cycle on $\mathcal{H}_\infty$, but for the corresponding signature $(2,1)$ space. We remind ourselves that we denoted by $G\subset B^\times$ the subgroup of elements of positive reduced norm. The action of $G$ on $\mathcal{H}_\infty$ is orientation preserving such that for any $\gamma\in G$, we have the equality of \emph{oriented} cycles
\begin{align*}
\gamma\centerdot \Delta_{\omega,\infty}=\Delta_{\gamma\centerdot \omega,\infty}.
\end{align*}
The oriented geodesic $\Delta_{\omega,\infty}$ splits $\mathcal{H}_\infty$ into two connected components, i.e. ~$\mathcal{H}_\infty\setminus \Delta_{\omega,\infty}=X^+\sqcup X^-$ where the orientation of $\Delta_{\omega,\infty}$ determines the choice of $X^+$ and $X^-$. Any two different points in $X^-$ (resp. ~in $X^+$) can be connected by a geodesic segment in $X^-$ (resp. ~$X^+$). Thus, the homology groups $\mathrm{H}_0(X^-)$ and $\mathrm{H}_0(X^+)$ are infinite cyclic generated by the homology class induced from a point and, hence, can be canonically identified with $\mathbb{Z}$. We conclude that the reduced homology group $\widetilde{\mathrm{H}}_0(\mathcal{H}_\infty\setminus \Delta_{\omega,\infty})$ which is the kernel of the augmentation map
\begin{align*}
\mathrm{H}_0(X^-)\oplus \mathrm{H}_0(X^+)\rightarrow \mathbb{Z}, \qquad \left(n_0, n_1\right)\mapsto n_0+n_1
\end{align*}
is infinite cyclic. Once and for all we fix an orientation on $\mathcal{H}_\infty$ such that we deduce an identification 
\begin{align}\label{identificarionreducedhomology}
\widetilde{\mathrm{H}}_0(\mathcal{H}_\infty\setminus \Delta_{\omega,\infty})=\mathbb{Z}.
\end{align}
From now on, we will denote by $\mathcal{C}_\bullet(\mathcal{H}_\infty)$ the subcomplex of the singular chain complex $C_\bullet(\mathcal{H}_\infty)$ defined in equation \eqref{complexgeneraldef} for the rational quadratic space $(B_0,-\mathrm{Nrd})$. That is, the special cycles appearing in this definition are, precisely, the geodesics $\Delta_{\omega,\infty}\subset \mathcal{H}_\infty$ for the RM points $\omega\in \mathcal{H}_{p}$. Let $\omega\in \mathcal{H}_{p}$ be an RM point defined over the real quadratic field $K$. Then we can associate to $\omega$ the divisor-valued cocycle
\begin{align}
\mathcal{D}_\omega\colon \mathcal{C}_1(\mathcal{H}_\infty)\rightarrow \mathrm{Div}^\dagger\left(\mathcal{H}_{p}\right), \qquad c\mapsto \sum_{\tau\in \Gamma\centerdot \omega} c\cap \Delta_{\tau,\infty}\cdot \tau.
\end{align}
While the group $\Gamma$ is not a $p$-arithmetic subgroup of $\mathrm{SO}(R[1/p])$, its image under the natural map $B^\times \rightarrow \mathrm{SO}_{B_0}(\mathbb{Q})$ is one. Thus, we can still apply the construction of \cite{DGL}. In the split case $B=\mathrm{M}_2(\mathbb{Q})$, we can take $\Gamma$ to be the Ihara group $\SL_2(\mathbb{Z}[1/p])$ and recover the divisor-valued cocycle attached to $\omega$ introduced in \cite{DarmonVonksingularmoduli}, Equation 27. 
\begin{definition}
We denote by $\mathcal{RM}(\Gamma)\subset \mathrm{H}^1(\Gamma,\mathrm{Div}^\dagger(\mathcal{H}_p))$ the subgroup generated by the classes $[\mathcal{D}_\omega]$ for RM points $\omega\in \mathcal{H}_p$.
\end{definition}
\begin{proposition}
Let $\mathcal{H}_p^{\mathrm{RM}}\subset \mathcal{H}_p$ denote the set of RM points on $\mathcal{H}_p$. The classes $[\mathcal{D}_\omega]$ are non-zero and, moreover, 
\begin{align*}
\mathcal{RM}(\Gamma)=\bigoplus_{\omega\in \Gamma\backslash \mathcal{H}_p^{\mathrm{RM}}} \mathbb{Z}\cdot [\mathcal{D}_\omega].
\end{align*}
\begin{proof}
The proposition follows from the classification result in  \cite{GehrmannquaternionicRC}, Proposition 11 in Section 2.1. 
\end{proof}
\end{proposition}
\subsubsection{The four-dimensional case}\label{sectionfourdimensionalcase}
Recall that $B/\mathbb{Q}$ is assumed to be an indefinite quaternion algebra which is split at $p$. In this situation, we established isomorphisms
\begin{equation*}
X_{\infty,B} \simeq  \mathcal{H}_\infty\times \mathcal{H}_\infty \qquad  \mathrm{and} \qquad  X_{p,B}  \simeq  \mathcal{H}_{p}\times \mathcal{H}_{p}.
\end{equation*} 
We compute the special cycles $\Delta_{v,\infty}$ and $\Delta_{v,p}$ under this identifications:
\begin{lemma}\label{computationdeltavp}
Let $v\in B$ be a vector of positive length. Then
\begin{align*}
\Delta_{v,\infty}=\lbrace (v\centerdot z,z) :  z\in \mathcal{H}_{\infty}\rbrace\subset \mathcal{H}_{\infty}\times \mathcal{H}_{\infty}
\end{align*}
and
\begin{align*}
\Delta_{v,p}=\lbrace (v\centerdot z,z) : z\in \mathcal{H}_{p}\rbrace\subset \mathcal{H}_{p}\times \mathcal{H}_{p}.
\end{align*}
\begin{proof}
The compatibility of the splitting of $\mathcal{Q}_B$ into $\mathcal{Q}_{B_0}\times \mathcal{Q}_{B_0}$ with the $B^\times \times B^\times$-action on both sides together with the identities
\begin{align*}
(v,1)\centerdot \Delta_{1,\infty}=\Delta_{v,\infty} \qquad \mathrm{and} \qquad (v,1)\centerdot \Delta_{1,p}=\Delta_{v,p}
\end{align*}
imply that it suffices to consider the case $v=1$. Recall that
\begin{align*}
\Delta_{1,\infty}=\lbrace [u]\in X_{\infty,B} : (u,1)=0\rbrace.
\end{align*}
But, $(u,1)=\mathrm{Nrd}(u)$ such that $\Delta_{1,\infty}$ consists of the reduced trace zero lines in $X_\infty$. Such a line is mapped to the pair $([u],[u])$ and the desired identification follows. The statement for the $p$-adic special cycles follows completely analogously. 
\end{proof}
\end{lemma}
\begin{lemma}
Let $v\in B$ be a vector of positive length. Then 
\begin{equation*}
\widetilde{\mathrm{H}}_n\left(\mathcal{H}_\infty^2\setminus \Delta_{v,\infty}\right) \simeq 
   \begin{cases}
     \mathbb{Z}, & \mathrm{if} \ n=1 \ \mathrm{and}  \\
     0, & \mathrm{otherwise.} 
     \end{cases}
\end{equation*}
\begin{proof}
First of all, the image of the special cycle $\Delta_{1,\infty}$ under the map $(v,1)\colon \mathcal{H}_\infty^2\rightarrow \mathcal{H}_\infty^2$ equals $\Delta_{v,\infty}$ such that we deduce isomorphisms
\begin{align*}
(v,1)_\star \colon \widetilde{\mathrm{H}}_n\left(\mathcal{H}_\infty^2\setminus \Delta_{1,\infty}\right)\rightarrow \widetilde{\mathrm{H}}_n\left(\mathcal{H}_\infty^2\setminus \Delta_{v,\infty}\right)
\end{align*}
for all integers $n\geq 0$. Observe that this isomorphism is independent of the vector $v\in B$.\\
Fix a point $P=(\tau,\tau)\in \Delta_{1,\infty}$ and consider the exponential map $\exp_\tau\colon \mathrm{T}_\tau(\mathcal{H}_\infty)\rightarrow \mathcal{H}_\infty$ at $\tau$. Here, the tangent space $W\coloneq \mathrm{T}_\tau(\mathcal{H}_\infty)$ is a two-dimensional real vector space. The exponential map $\exp_P=\exp_\tau\times \exp_\tau$ at $P$ induces a diffeomorphism 
\begin{align*}
\exp_P\colon W^2\setminus \Delta\left(W\right)\rightarrow \mathcal{H}_\infty^2\setminus \Delta_{1,\infty} 
\end{align*}  
and we derive isomorphisms
\begin{align*}
\widetilde{\mathrm{H}}_n\left(W^2\setminus \Delta\left(W\right)\right)\xrightarrow{\simeq} \widetilde{\mathrm{H}}_n\left(\mathcal{H}_\infty^2\setminus \Delta_{1,\infty}\right)
\end{align*}
for all integers $n\geq 0$. The map
\begin{align}\label{isostep27}
W^2\setminus \Delta(W)\rightarrow W\setminus \lbrace 0\rbrace \times W, \qquad (w_1,w_2)\mapsto (w_1-w_2,w_2)
\end{align}
is a homeomorphism and, as a real vector space, $W$ is homotopy equivalent to a point. Thus,
\begin{align*}
\widetilde{\mathrm{H}}_n\left(W^2\setminus \Delta(W)\right)\simeq \widetilde{\mathrm{H}}_n\left(W\setminus \lbrace 0\rbrace\right)
\end{align*}
for all integers $n\geq 0$. Finally, the punctured space $W\setminus \lbrace 0\rbrace$ is homotopy equivalent to the one-sphere $\mathbb{S}^1$ whose homology groups are the ones in the lemma.
\end{proof}
\end{lemma}\label{phicomputation}
For later reference, we prove the following symmetry observation:
\begin{lemma}
Consider the continuous map 
\begin{align*}
\varsigma_\infty\colon \mathcal{H}_\infty^2\rightarrow \mathcal{H}_\infty^2, \qquad (z_1,z_2)\mapsto (z_2,z_1).
\end{align*}
The map $\varsigma_\infty$ induces a chain map
\begin{align*}
\varsigma_\infty\colon\mathcal{C}_\bullet(\mathcal{H}_\infty^2)\rightarrow \mathcal{C}_\bullet(\mathcal{H}_\infty^2).
\end{align*}
Moreover, $\varsigma_\infty$ preserves the special cycle $\Delta_{1,\infty}$ and the induced map
\begin{align*}
(\varsigma_\infty)_\star\colon \mathrm{H}_1\left(\mathcal{H}_\infty^2\setminus \Delta_{1,\infty}\right)\rightarrow \mathrm{H}_1\left(\mathcal{H}_\infty^2\setminus \Delta_{1,\infty}\right)
\end{align*}
is the identity.
\begin{proof}
The first statement follows immediately from the equality 
\begin{align*}
\varsigma_\infty(\Delta_{u,\infty})=\Delta_{u^{-1},\infty}
\end{align*}
for $u\in G$. For the second statement, fix a point $P=(\tau,\tau)\in \Delta_{1,\infty}$ and denote by $W\coloneq \mathrm{T}_\tau(\mathcal{H}_\infty)$ the tangent space at $\tau$ and by $d\colon W^2\setminus \Delta(W)\rightarrow W\setminus \lbrace 0\rbrace$ the composition of the map in equation \eqref{isostep27} with the projection to $W\setminus \lbrace 0\rbrace$. The isomorphisms of real vector spaces
\begin{align*}
\phi\colon W^2\rightarrow W^2, \quad  (v,w)\mapsto(w,v) \qquad \mathrm{and} \qquad \phi^{\prime}\colon W \rightarrow  W, \quad v\mapsto -v 
\end{align*}
fit into the commutative diagram
\begin{center}
\begin{minipage}{\linewidth}
\centering
\begin{tikzcd}
\mathcal{H}_\infty^2\setminus \Delta_{1,\infty} \arrow[rr]{rr}{\varsigma_\infty} & & \mathcal{H}_\infty^2\setminus \Delta_{1,\infty}  \\
W^2\setminus \Delta(W)\arrow[u]{u}{\exp_P} \arrow[d]{d}{d}\arrow[rr]{rr}{\phi} & & W^2\setminus \Delta(W)\arrow[u]{u}{\exp_P}\arrow[d]{d}{d} \\
W\setminus \lbrace 0 \rbrace \arrow[rr]{rr} {\phi^{\prime}} & & W\setminus \lbrace 0 \rbrace.
 \end{tikzcd}
\end{minipage}
\end{center}
As a homomorphism of real vector spaces, the map $\phi^{\prime}$ has determinant one such that the induced map $(\phi^{\prime})_\star\colon\mathrm{H}_1(W\setminus \lbrace 0\rbrace)\rightarrow \mathrm{H}_1(W\setminus \lbrace 0\rbrace)$ is the identity. The lemma follows.
\end{proof}
\end{lemma}
The construction of \cite{DGL} for the group $\Gamma^2$ produces cocycles
\begin{align*}
\mathcal{D}_v\colon \mathcal{C}_2(\mathcal{H}_\infty^2)\rightarrow \mathrm{Div}^\dagger_{\mathrm{rq}}(\mathcal{H}_p^2), \qquad c\mapsto \sum_{w\in \Gamma^2\centerdot v} c\cap \Delta_{w,\infty} \cdot \Delta_{w,p}
\end{align*}
for every vector $v\in B$ of positive length.
\begin{example}
Recall that $\Gamma^2$ acts on $B$ via conjugation. Therefore, $\Gamma^2\centerdot v=\Gamma\cdot v\cdot \Gamma$. In particular, for $v=1$ we obtain 
\begin{align*}
\mathcal{D}_1(c)=\sum_{\gamma\in \Gamma}c\cap\Delta_{\gamma,\infty}\cdot \Delta_{\gamma,p}
\end{align*}
for $c\in \mathcal{C}_2(\mathcal{H}_\infty^2)$.
\end{example}
We can show easily that the class induced by $\mathcal{D}_1$ is \emph{symmetric} in the following sense. Denote by $\varsigma_p\colon \mathcal{H}_p^2\rightarrow \mathcal{H}_p^2$ the morphism of rigid analytic spaces and by $t\colon G^2\rightarrow G^2$ the homomorphism of groups which switches the coordinates, respectively. Clearly, $\varsigma_p$ is compatible with $t$ in the sense that $\varsigma_p(\gamma\centerdot z)=t(\gamma)\centerdot \varsigma_p(z)$ for all $\gamma\in G^2$ and $z\in \mathcal{H}_p^2$. The morphism $\varsigma_p$ induces a homomorphism of groups 
\begin{align*}
\varsigma_p\colon \mathrm{Div}^\dagger_{\mathrm{rq}}(\mathcal{H}_p^2)\rightarrow \mathrm{Div}^\dagger_{\mathrm{rq}}(\mathcal{H}_p^2), \qquad \Delta_{u,p}\mapsto \Delta_{u^{-1},p}
\end{align*}
which is compatible with $t$. Hence, the pair $(t,\varsigma_p)$ induces a homomorphism in cohomology (compare \cite{brown}, Chapter III.8) 
\begin{align}\label{definitionsigma}
\sigma\colon \mathrm{H}^2(\Gamma^2,\mathrm{Div}^\dagger_{\mathrm{rq}}(\mathcal{H}_p^2))\rightarrow \mathrm{H}^2(\Gamma^2,\mathrm{Div}^\dagger_{\mathrm{rq}}(\mathcal{H}_p^2)).
\end{align}
Lemma \ref{phicomputation} enables us to prove:
\begin{lemma}\label{symmetryofD1}
The class $[\mathcal{D}_1]$ remains fixed under the homomorphism $\sigma$.
\begin{proof}
The chain map 
\begin{align*}
\Hom_{\Gamma^2}(\mathcal{C}_\bullet(\mathcal{H}_\infty^2),\mathrm{Div}^\dagger_{\mathrm{rq}}(\mathcal{H}_p^2))\rightarrow \Hom_{\Gamma^2}(\mathcal{C}_\bullet(\mathcal{H}_\infty^2),\mathrm{Div}^\dagger_{\mathrm{rq}}(\mathcal{H}_p^2)), \qquad f\mapsto \varsigma_p\circ f\circ \varsigma_\infty
\end{align*}
induces the homomorphism $\sigma$. Thus, the lemma follows if we can prove that this chain map fixes the cocycle $\mathcal{D}_1$. Let $c\in \mathcal{C}_2(\mathcal{H}_\infty^2)$ be a chain. We compute
\begin{align*}
\varsigma_p\circ \mathcal{D}_{1} \circ \varsigma_\infty \  (c)=\sum_{\gamma\in \Gamma} \varsigma_\infty(c)\cap \Delta_{\gamma,\infty} \cdot \Delta_{\gamma^{-1},p}.
\end{align*}
We claim that we have
\begin{align*}
\varsigma_\infty(c)\cap \Delta_{\gamma,\infty}\overset{!}{=}c\cap \Delta_{\gamma^{-1},\infty}.
\end{align*}
Using the compatibility of $\varsigma_\infty$ with $t$, we can rewrite the left hand side to
\begin{align*}
\varsigma_\infty(c)\cap \Delta_{\gamma,\infty}=\varsigma_\infty\left((1,\gamma^{-1})\centerdot c\right)\cap \Delta_{1,\infty}
\end{align*} 
and the right hand side to
\begin{align*}
c\cap \Delta_{\gamma^{-1},\infty}=\left((1,\gamma^{-1})\centerdot c\right)\cap \Delta_{1,\infty}.
\end{align*}
The claim follows from Lemma \ref{phicomputation}.
\end{proof}
\end{lemma}
We end this section by defining the notion of Kudla--Millson divisors in this setup. For any integer $n\geq 1$ consider the set 
\begin{align}
\mathcal{O}_n\coloneq \lbrace b\in R[1/p] : \mathrm{Nrd}(b)=n\rbrace
\end{align}
of elements of reduced norm $n$. Following the proof of Eichler's theorem on norms (see \cite{Voight}, Chapter 28.6, Theorem 28.6.1), one can show that the set $\mathcal{O}_n$ is non-empty for all integers $n\geq 1$. Strong approximation for $B$ implies that there is a finite system of representatives $\lbrace \beta_j\rbrace_{j=1}^r$ for the double coset space $\Gamma\backslash\mathcal{O}_n/\Gamma$, and we define the cocycle 
\begin{align}
\mathcal{D}_n\coloneq \sum_{i=1}^r \mathcal{D}_{\beta_i}\in  \Hom_{\Gamma^2}\left(\mathcal{C}_2(\mathcal{H}_\infty^2), \mathrm{Div}^\dagger_{\mathrm{rq}}(\mathcal{H}_p^2)\right),
\end{align}
whose definition is independent of the chosen system of representatives.
\begin{definition}
A \emph{Kudla--Millson divisor} for $\Gamma^2$ is a finite linear combination of classes $[\mathcal{D}_n]$ for $n\geq 1$. We denote by $\mathcal{KM}(\Gamma^2)\subset \mathrm{H}^2\left(\Gamma^2,\mathrm{Div}^\dagger_{\mathrm{rq}}(\mathcal{H}_p^2)\right)$ the group of Kudla--Millson divisors.
\end{definition}
Later (Corollary \ref{symmetryofdn}), we will prove that the statement in Lemma \ref{symmetryofD1} extends to all Kudla--Millson divisors. The reason for this is the compatibility of the map $\sigma$ with the action of the Hecke operators which we introduce in the next section.
\section{Lifting obstructions}\label{section2}
In this section we study lifting obstructions for the divisor-valued cocycles \`a la Darmon--Vonk and for the Kudla--Millson divisors for $\Gamma^2$. Let $d_B\in \mathbb{Z}$ be the \emph{discriminant} of $B$. That is, $d_B$ is the product of all primes $l\in \mathbb{Z}$ for which $B_l$ is a division ring. In particular, $d_B=1$ if $B$ is split.
\subsection{Hecke operators}
We start by briefly recalling the notion of Hecke operators in our setup. A good reference for the following facts is \cite{miyake}, Chapter 5.3. Let $v\in B$ be a vector of positive reduced norm. Strong approximation for $B$ implies that the double coset $\Gamma\cdot v\cdot \Gamma$ is a finite disjoint union of right cosets
\begin{align*}
\Gamma\cdot v\cdot \Gamma=\bigsqcup_{j=1}^{r_v}  \Gamma\cdot\alpha_j(v).
\end{align*}
See \cite{miyake}, Chapter 5.3, Lemma 5.3.4. We denote by $\mathbb{T}$ the free $\mathbb{Z}$-module generated by the symbols $\mathrm{T}_v\coloneq \Gamma\cdot v\cdot \Gamma$. Using systems of representatives for the coset spaces $\Gamma\backslash \Gamma\cdot v\cdot \Gamma$ for $v\in G$, one can define a multiplication in $\mathbb{T}$ endowing it with a $\mathbb{Z}$-algebra structure. See \cite{miyake}, Chapter 2.7, Lemma 2.7.3. Recall that we denoted by $\mathcal{O}_n\subset R[1/p]$ the subset of reduced norm $n$ elements in $R[1/p]$ and that $\mathcal{O}_n$ decomposes into a finite disjoint union of double cosets, say with system of representatives $\lbrace \beta_j\rbrace_{j=1}^r$. We define the $n^{\mathrm{th}}$ Hecke operator $\mathrm{T}_n$ by
\begin{align}
\mathrm{T}_n\coloneq \sum_{i=1}^r \mathrm{T}_{\beta_j}\in \mathbb{T}.
\end{align}
It is a standard fact that the Hecke algebra $\mathbb{T}$ is commutative and generated by the Hecke operators $\mathrm{T}_l$ and the Hecke operators $\mathrm{T}(l,l)\coloneq \Gamma\cdot l\cdot \Gamma$ for prime numbers $l\in \mathbb{Z}$. Moreover, one can show that one has
\begin{enumerate}
\item[(a)]$\mathrm{T}_m\cdot \mathrm{T}_n=\mathrm{T}_{n\cdot m}$ for coprime integers $n$ and $m$, 
\item[(b)]
\begin{equation*}
\mathrm{T}_l\cdot \mathrm{T}_{l^e}=
   \begin{cases} 
    \mathrm{T}_{l^{e+1}}+l\cdot \mathrm{T}(l,l)\cdot \mathrm{T}_{l^{e-1}} , & \mathrm{if} \ l\nmid d_B\cdot p \ \mathrm{and}\\ 
     \mathrm{T}_{l^{e+1}} , & \mathrm{if} \  l\mid d_B\\
     \end{cases}
\end{equation*}
for integers $e\geq 1$ and 
\item[(c)]$\mathrm{T}_{p^{2\cdot m+\varepsilon}}=T(p,p)^m\cdot \mathrm{T}_{p^\varepsilon}$ for $m\in \mathbb{Z}$ and $\varepsilon\in \lbrace 0,1\rbrace$.
\end{enumerate}
We refer to \cite{miyake}, Chapter 5.3, Corollary 5.3.7 for a proof of this fact. \\
Let $\Omega$ be a $G$-module and consider a projective resolution $C_\bullet$ of $\mathbb{Z}$ over $\mathbb{Z}[G]$. Since projective $\mathbb{Z}[G]$-modules are projective as $\mathbb{Z}[\Gamma]$-modules, we can compute the group cohomology of $\Gamma$ with values in $\Omega$ in terms of the resolution $C_\bullet$. Thus, the chain map
\begin{align*}
\mathrm{T}_v\colon  \Hom_{\Gamma}(C_\bullet,\Omega)\rightarrow \Hom_{\Gamma}(C_\bullet,\Omega), \quad u\mapsto \mathrm{T}_v(u)
\end{align*}
where
\begin{align*}
\mathrm{T}_v(u):  c\mapsto \sum_{j=1}^{r_v}\alpha_j(v)^{-1}\centerdot u(\alpha_j(v)\centerdot c)
\end{align*}
induces an endomorphism $\mathrm{T}_v \colon \mathrm{H}^i(\Gamma,\Omega)\rightarrow \mathrm{H}^i(\Gamma,\Omega)$ for all $i\geq 0$. We deduce maps $\mathbb{T}\rightarrow \mathrm{End}(\mathrm{H}^i(\Gamma,\Omega))$ for $i\geq 0$ and one can show that this maps are a homomorphism of rings. Compare \cite{miyake}, Chapter 2.4, Lemma 2.7.4. Analogously, if $\mathrm{M}$ is a $G^2$-module and $P_\bullet$ is a projective resolution of $\mathbb{Z}$ over $\mathbb{Z}[G^2]$, then for all pairs $(v,w)\in G^2$ we obtain Hecke operators 
\begin{align*}
\mathrm{T}_v\times \mathrm{T}_w\colon \mathrm{H}^i(\Gamma^2,\mathrm{M})\rightarrow \mathrm{H}^i(\Gamma^2,\mathrm{M}) \quad \mathrm{for} \quad i\geq 0
\end{align*}
induced by the chain map
\begin{align*}
\mathrm{T}_{v}\times \mathrm{T}_w \ (u):  x\mapsto \sum_{j=1}^{r_v} \sum_{k=1}^{r_w}\left(\alpha_j(v)^{-1}, \alpha_k(w)^{-1}\right)\centerdot u\left( \left(\alpha_j(v), \alpha_k(w)\right) \centerdot x\right)
\end{align*}
for $u\in \Hom_{\Gamma^2}(P_\bullet,\mathrm{M})$. We obtain homomorphism of rings $\mathbb{T}\times \mathbb{T}\rightarrow \mathrm{End}(\mathrm{H}^i(\Gamma^2,\mathrm{M}))$ for all $i\geq 0$. 
\begin{remark}\label{remark2.4}
If $\Omega$ is a $G$-module such that the module structure factors through $G/\mathbb{Q}^\times$, then the Hecke operators $\mathrm{T}(l,l)$ act trivially on the cohomology groups $\mathrm{H}^i(\Gamma,\Omega)$ for all $i\geq 0$. Therefore, the commutativity of $\mathbb{T}$ together with the identities (a), (b) and (c) imply that the image of $\mathbb{T}$ in the endomorphism ring of $\mathrm{H}^i(\Gamma,\Omega)$ equals the subgroup generated by the Hecke operators $\mathrm{T}_n$ with $n\geq 1$. Analogously, if $\mathrm{M}$ is a $G^2$-module whose module structure factors through $(G/\mathbb{Q}^\times)^2$, then the Hecke operators $\mathrm{T}(l,l)\times 1$ and $1\times \mathrm{T}(l,l)$ act trivially on $\mathrm{H}^i(\Gamma^2,\mathrm{M})$ for all $i\geq 0$. Hence, the image of $\mathbb{T}\times \mathbb{T}$ in the endomorphism ring equals the subgroup generated by the Hecke operators $\mathrm{T}_n\times \mathrm{T}_m$, $\mathrm{T}_n\times 1$ and $1\times \mathrm{T}_m$ for integers $n,m\geq 1$ .
\end{remark}
The group of Weil divisors $\mathrm{Div}^\dagger(\mathcal{H}_p)$ is a $G/\mathbb{Q}^\times$-module such that the cohomology groups are endowed with a $\mathbb{T}$-module structure. We have:
\begin{lemma}
The group $\mathcal{RM}(\Gamma)$ is a $\mathbb{T}$-module. 
\begin{proof}
This follows immediately from Proposition 11 in Section 2.2 of \cite{GehrmannquaternionicRC}.
\end{proof}
\end{lemma}
Similarly, the group of rational quadratic divisors is a $(G/\mathbb{Q}^\times)^2$-module such that the cohomology groups are endowed with a $\mathbb{T}\times \mathbb{T}$-module structure. To show that the subgroup $\mathcal{KM}(\Gamma^2)$ of Kudla--Millson divisors is a $\mathbb{T}\times \mathbb{T}$-module, we need to compute the image of the class $[\mathcal{D}_1]$ under the Hecke operators $\mathrm{T}_n\times \mathrm{T}_m$. Since the resolution $\mathcal{C}_\bullet(\mathcal{H}_\infty^2)$ is a resolution of $\mathbb{Z}$ over $\mathbb{Z}[G^2]$, we can do this in terms of the cocycle $\mathcal{D}_1$.
\begin{proposition}\label{lemmacomputation}
We have:
\begin{enumerate}
\item[(1)]Let $n\geq 1$ be an integer. Then, $(\mathrm{T}_n\times 1)(\mathcal{D}_1)=\mathcal{D}_n=(1\times \mathrm{T}_n)(\mathcal{D}_1)$.
\item[(2)]Let $\mathrm{T}\in \mathbb{T}$ be any Hecke operator. Then, $(\mathrm{T}\times 1)(\mathcal{D}_1)=(1\times \mathrm{T})(\mathcal{D}_1)$.
\end{enumerate}
In particular, we have
\begin{align*}
\mathcal{KM}(\Gamma^2)=(\mathbb{T}\times 1)\cdot [\mathcal{D}_1].
\end{align*}
\begin{proof}
Let $v\in G$. We claim that we have
\begin{align*}
(\mathrm{T}_{v}\times 1) (\mathcal{D}_{1})\overset{!}{=} \mathcal{D}_{v^{-1}}\overset{!}{=} (1\times \mathrm{T}_{v^{-1}})   (\mathcal{D}_{1}).
\end{align*}
To see this, let $c\in \mathcal{C}_2(\mathcal{H}_\infty^2)$ be a chain. We compute
\begin{align*}
(\mathrm{T}_{v}\times 1) (\mathcal{D}_{1}) (c)&=\sum_{i=1}^{r_v} (\alpha_i(v)^{-1},1) \centerdot \mathcal{D}_{1}\left((\alpha_i(v),1) \centerdot c\right)  \\
&=\sum_{i=1}^{r_v} \sum_{\gamma\in \Gamma} \left(\alpha_i(v),1)\centerdot c\right)\cap \Delta_{\gamma,\infty} \cdot \  \Delta_{\alpha_i(v)^{-1} \cdot\gamma,p} \\
&=\sum_{i=1}^{r_v} \sum_{\gamma\in \Gamma}  c\cap \Delta_{\alpha_i(v)^{-1}\cdot \gamma,\infty} \cdot \  \Delta_{\alpha_i(v)^{-1} \cdot \gamma,p}\\
&=\sum_{\gamma\in \Gamma \cdot  v^{-1}\cdot \Gamma} c\cap \Delta_{\gamma,\infty} \cdot \  \Delta_{\gamma,p}=\mathcal{D}_{v^{-1}}(c).
\end{align*}
The fourth equality holds because 
\begin{align*}
\Gamma \cdot v^{-1}\cdot  \Gamma=(\Gamma \cdot v \cdot \Gamma)^{-1}=\bigsqcup_{i=1}^{r_v} \alpha_i(v)^{-1} \cdot \Gamma. 
\end{align*}
The second equality follows analogously and is left to the reader. \\
Statement (1) in the proposition follows now from this identities. Let $\lbrace\beta_j\rbrace_{j=1}^r$ be a system of representatives for $\Gamma\backslash \mathcal{O}_n/\Gamma$. The second equality is obvious by the definitions and the second identity in the claim. To prove the first, observe
\begin{align*}
\mathcal{O}_n=\overline{\mathcal{O}_n}=\bigsqcup_{i=1}^r \Gamma\cdot \overline{\beta_j}\cdot \Gamma
\end{align*}
such that $\big\{\overline{\beta_j}\big\}_{j=1}^r$ is a system of representatives for $\Gamma\backslash \mathcal{O}_n/\Gamma$, too. Moreover, since $\mathrm{Nrd}(\beta_j)=n$, we have $\overline{\beta_j}=n\cdot \beta_j^{-1}$. This implies that we have $\mathcal{D}_{\overline{\beta_j}}=\mathcal{D}_{\beta_j^{-1}}$ such that 
\begin{align*}
\mathcal{D}_n=\sum_{j=1}^r\mathcal{D}_{\overline{\beta_j}}=\sum_{j=1}^r\mathcal{D}_{\beta_j^{-1}}=(\mathrm{T}_n\times 1)(\mathcal{D}_1)
\end{align*}
by the first identity in the claim.  Finally, the remaining claims follow directly from (1) and Remark \ref{remark2.4} 
\end{proof}
\end{proposition}
\begin{corollary}\label{symmetryofdn}
The homomorphism $\sigma$ defined in equation \eqref{definitionsigma} is the identity after restricting to the subgroup $\mathcal{KM}(\Gamma^2)$.
\begin{proof}
Let $n\geq 1$ be an integer. We compute
\begin{align*}
\sigma(\mathcal{D}_n)=\sigma\left((\mathrm{T}_n\times 1) \cdot \mathcal{D}_1\right)=(1\times \mathrm{T}_n)\cdot \sigma(\mathcal{D}_1)=(1\times \mathrm{T}_n)\cdot \mathcal{D}_1=\mathcal{D}_n,
\end{align*}
where the third equality is Lemma \ref{symmetryofD1}.
\end{proof}
\end{corollary}
\subsection{Lifting obstructions}
For the moment, let $X_p$ be the $p$-adic symmetric space associated to an arbitrary rational quadratic space $(V,q)$. We denote by $\mathcal{A}$ the ring of rigid analytic functions and by $\mathcal{M}$ the field of rigid meromorphic functions on $X_p$, respectively. Furthermore, we denote by $\mathrm{Div}^\dagger(X_p)$ the group of Weil divisors on $X_p$. The multiplicative groups $\mathcal{A}^\times$ of invertible rigid analytic and $\mathcal{M}^\times$ of non-vanishing rigid meromorphic functions on $X_p$ are (left) $\mathrm{O}_V(\mathbb{Q}_p)$-modules via
\begin{align*}
(\gamma\centerdot f)(z)\coloneq f(\gamma^{-1}\centerdot z) \quad \mathrm{for} \quad f\in \mathcal{M}^\times \qquad \mathrm{and} \qquad \gamma\in \mathrm{O}_V(\mathbb{Q}_p). 
\end{align*}
One verifies easily that the \emph{divisor map} which assigns to every rigid meromorphic function its divisor gives rise to an exact sequence of $\mathrm{O}_V(\mathbb{Q}_p)$-modules
\begin{align*}
0\rightarrow \mathcal{A}^\times\rightarrow \mathcal{M}^\times\xrightarrow{\mathrm{div}} \mathrm{Div}^\dagger(X_p).
\end{align*}
See \cite{Boschmeromorphic}, Section 3, the discussion before Proposition 3.1 for a proof of the fact that $\mathrm{ker}(\mathrm{div})=\mathcal{A}^\times$. We have:
\begin{proposition}\label{propdivissurjective}
In the cases $V=B_0$ and $V=B$, the divisor map is surjective. In other words, we have a short exact sequence of $\mathrm{O}_V(\mathbb{Q}_p)$-modules
\begin{align*}
0\rightarrow \mathcal{A}^\times\rightarrow \mathcal{M}^\times\rightarrow \mathrm{Div}^\dagger(X_p)\rightarrow 0.
\end{align*}
\begin{proof}
In the case $V=B_0$ this result is well-known, see for example \cite{vanderputfresnel}, Chapter 2.7, Theorem 2.7.6, and in case $V=B$ it is proven in \cite{GehrmannSprehe}.
\end{proof}
\end{proposition}
From now on, we denote by $\mathcal{A}_n$ (respectively $\mathcal{M}_n$) the ring of rigid analytic (respectively rigid meromorphic) functions on $\mathcal{H}_p^n$ for $n\in \lbrace 1,2\rbrace$. The short exact sequence in Proposition \ref{propdivissurjective} induces short exact sequences in cohomology
\begin{align*}
\mathrm{H}^1(\Gamma,\mathcal{M}_{1}^\times)\rightarrow  \mathrm{H}^1(\Gamma,\mathrm{Div}^\dagger(\mathcal{H}_p))\rightarrow \mathrm{H}^{2}(\Gamma,\mathcal{A}_{1}^\times)
\end{align*}
and
\begin{align*}
\mathrm{H}^2(\Gamma^2,\mathcal{M}_{2}^\times)\rightarrow  \mathrm{H}^2(\Gamma^2,\mathrm{Div}^\dagger(\mathcal{H}_p^2))\rightarrow \mathrm{H}^{3}(\Gamma^2,\mathcal{A}_{2}^\times)
\end{align*}
which are compatible the $\mathbb{T}$-module (respectively $\mathbb{T}\times \mathbb{T}$-module) structure. The rest of this section is devoted to analysing the images of the connecting homomorphisms and, in particular, to determining a subalgebra of the Hecke algebra that annihilates it. 
\subsubsection{The Darmon--Vonk case}\label{liftingobstructiondv}
The short exact sequence  
\begin{align*}
0\rightarrow \mathbb{C}_p^\times \rightarrow \mathcal{A}_{1}^\times \rightarrow \mathcal{A}_{1}^\times/\mathbb{C}_p^\times\rightarrow 0
\end{align*}
induces an exact sequence in cohomology 
\begin{align*}
\mathrm{H}^2(\Gamma,\mathbb{C}_p^\times)\rightarrow  \mathrm{H}^2(\Gamma,\mathcal{A}_{1}^\times) \rightarrow  \mathrm{H}^2(\Gamma,\mathcal{A}_{1}^\times/\mathbb{C}_p^\times).
\end{align*}
Thus, we can determine the annihilator of $\mathrm{H}^2(\Gamma,\mathcal{A}_{1}^\times)$ by studying the left- and right-most Hecke modules. \\
As a $p$-arithmetic group, $\Gamma$ is of type $\mathrm{FP}_\infty$. That is, there exists a resolution of the trivial $\Gamma$-module $\mathbb{Z}$ that consists of finitely generated free $\mathbb{Z}[\Gamma]$-modules. See \cite{borelserre1976} for a proof of this fact. This finiteness condition enables us to apply the \emph{Universal Coefficient Theorem} which giving a split exact sequence
\begin{align*}
0\rightarrow \mathrm{H}^2(\Gamma,\mathbb{Z})\otimes \mathbb{C}_p^\times \rightarrow \mathrm{H}^2(\Gamma,\mathbb{C}_p^\times)\rightarrow \mathrm{Tor}^\mathbb{Z}_1(\mathrm{H}^{3}(\Gamma,\mathbb{Z}),\mathbb{C}_p^\times)\rightarrow 0.
\end{align*}
One can check easily that the first arrow is compatible with the $\mathbb{T}$-action in the sense that an element $(\mathrm{T}\cdot \mathcal{J})\otimes \lambda$ is mapped to $\mathrm{T}\cdot (\mathcal{J}\otimes \lambda)$. Furthermore, all integer cohomology groups of $\Gamma$ are finitely generated abelian groups such that the right-most part of the sequence is finite. Thus, the ideal $\mathrm{Ann}_\mathbb{T}(\mathrm{H}^2(\Gamma,\mathbb{Z}))$ is non-zero and, moreover, if we denote by $N\geq 1$ the exponent of the torsion part of $\mathrm{H}^{3}(\Gamma,\mathbb{Z})$, then 
\begin{align*}
0\neq  N\cdot \mathrm{Ann}_\mathbb{T}\left(\mathrm{H}^2(\Gamma,\mathbb{Z})\right)\subset \mathrm{Ann}_\mathbb{T}\left(\mathrm{H}^2(\Gamma,\mathbb{C}_p^\times)\right).
\end{align*}
The central combinatorial object for studying the Hecke module $\mathrm{H}^2(\Gamma,\mathcal{A}_1^\times/\mathbb{C}_p^\times)$ is the \emph{Bruhat--Tits tree} $\mathcal{T}$ for $B_p\simeq \GL_2(\mathbb{Q}_p)$. We denote by $\mathcal{V}$ the set of vertices and by $\mathcal{E}^{\mathrm{or}}$ the set of oriented edges of $\mathcal{T}$. For an edge $e\in \mathcal{E}^{\mathrm{or}}$ we denote by $s(e)\in \mathcal{V}$ the source and by $t(e)\in \mathcal{V}$ the target of $e$. Furthermore, we denote by $\overline{e}$ the edge obtained by reversing the orientation of $e$. A map $\varphi\colon \mathcal{E}^{\mathrm{or}}\rightarrow \mathbb{Z}$ is called \emph{harmonic cochain on $\mathcal{T}$} if it satisfies
\begin{enumerate}
\item[(a)] $\varphi(e)+\varphi(\overline{e})=0$ for any edge $e\in \mathcal{E}^{\mathrm{or}}$ and 
\item[(b)] $\sum_{e, s(e)=v} \varphi(e)=0$ for every vertex $v\in \mathcal{V}$.
\end{enumerate}
Let $\mathbf{H}(\mathcal{T},\mathbb{Z})$ be the group of harmonic cochains on $\mathcal{T}$. The group $\mathbf{H}(\mathcal{T},\mathbb{Z})$ is, naturally, a $\GL_2(\mathbb{Q}_p)$-module via the structure
\begin{align*}
(\gamma \varphi)(e)=\varphi(\gamma^{-1} e)
\end{align*}
for $\varphi\in \mathbf{H}(\mathcal{T},\mathbb{Z})$. Denote by $\mathcal{F}(\mathcal{V},\mathbb{Z})$ and $\mathcal{F}(\mathcal{E}^{\mathrm{or}},\mathbb{Z})$ the groups of maps $\mathcal{V}\rightarrow \mathbb{Z}$ and $\mathcal{E}^{\mathrm{or}}\rightarrow \mathbb{Z}$, respectively. We endow both groups with a $\GL_2(\mathbb{Q}_p)$-module structure defined as for $\mathbf{H}(\mathcal{T},\mathbb{Z})$. Moreover, let 
\begin{align*}
\mathcal{F}_0(\mathcal{E}^{\mathrm{or}},\mathbb{Z})\coloneq\lbrace \varphi\in \mathcal{F}(\mathcal{E}^{\mathrm{or}},\mathbb{Z}) \mid \varphi(e)+\varphi(\overline{e})=0 \quad \mathrm{for \ all} \quad e\in \mathcal{E}^{\mathrm{or}}\rbrace
\end{align*}
be the $\GL_2(\mathbb{Q}_p)$-submodule of maps satisfying condition (a). The map 
\begin{align*}
\mathcal{F}_0(\mathcal{E}^{\mathrm{or}},\mathbb{Z})\rightarrow \mathcal{F}(\mathcal{V},\mathbb{Z}), \qquad \varphi\mapsto \left(v\mapsto \sum_{s(e)=v} \varphi(e)\right)
\end{align*}
induces a short exact sequence of $\GL_2(\mathbb{Q}_p)$-modules
\begin{align}
0\rightarrow \mathbf{H}(\mathcal{T},\mathbb{Z})\rightarrow \mathcal{F}_0(\mathcal{E}^{\mathrm{or}},\mathbb{Z})\rightarrow \mathcal{F}(\mathcal{V},\mathbb{Z}) \rightarrow 0.
\end{align}
In \cite{vfples}, van der Put constructs a $\GL_2(\mathbb{Q}_p)$-equivariant map $\mathcal{A}_{1}^\times\rightarrow \mathbf{H}(\mathcal{T},\mathbb{Z})$ whose kernel equals the group of constant functions. He proves:
\begin{proposition}
The sequence 
\begin{align*}
0\rightarrow \mathbb{C}_p^\times\rightarrow \mathcal{A}_{1}^\times\rightarrow \mathbf{H}(\mathcal{T},\mathbb{Z})\rightarrow 0
\end{align*}
is exact. In particular, there is a short exact sequence of $\GL_2(\mathbb{Q}_p)$-modules
\begin{align}\label{sequencevanderput}
0\rightarrow \mathcal{A}_{1}^\times/\mathbb{C}_p^\times\rightarrow \mathcal{F}_0(\mathcal{E}^{\mathrm{or}},\mathbb{Z})\rightarrow \mathcal{F}(\mathcal{V},\mathbb{Z})\rightarrow 0.
\end{align}
\begin{proof}
This is \cite{vfples}, Section 1, Proposition 1.1.
\end{proof}
\end{proposition}
We fix an isomorphism $\iota_p\colon B_p\rightarrow \mathrm{M}_2(\mathbb{Q}_p)$ that identifies $R\otimes_\mathbb{Z} \mathbb{Z}_p$ with $\mathrm{M}_2(\mathbb{Z}_p)$. The group $\Gamma$ acts with precisely three orbits on $\mathcal{T}$. Namely, a vertex $v_0$ whose stabilizer in $\mathrm{SL}_2(\mathbb{Q}_p)$ is $\mathrm{SL}_2(\mathbb{Z}_p)$, a vertex $v_1$ whose stabilizer in $\SL_2(\mathbb{Q}_p)$ is 
\begin{align*}
\mathrm{SL}_2(\mathbb{Z}_p)^\prime\coloneq \left(\begin{array}{rr} 1 & 0 \\ 0 & p\end{array}\right)\cdot \mathrm{SL}_2(\mathbb{Z}_p)\cdot \left(\begin{array}{rr} 1 & 0 \\ 0 & p\end{array}\right)^{-1} 
\end{align*}
and the unoriented edge $e$ which connects $v_0$ and $v_1$. Thus, the stabilizer of $e$ in $\mathrm{SL}_2(\mathbb{Q}_p)$ is the group of determinant one matrices in the standard Eichler order of level $p$ in $B_p$. See \cite{SerreTrees}, Chapter II.1.4 for further reference. In particular, the stabilizer of the vertex $v_0$ in $\Gamma$ is the arithmetic group $\Gamma_0\coloneq R_1^\times$ of reduced norm one units in $R$, the stabilizer $\Gamma_0^\prime$ of $v_1$ is conjugated to $\Gamma_0$ and the stabilizer $\Gamma_0^p$ of $e$ is the group of reduced norm one units in an Eichler order of level $p$ contained in the maximal order $R$. This implies that we have isomorphisms of $\Gamma$-modules
\begin{align*}
\mathcal{F}_0(\mathcal{E}^{\mathrm{or}},\mathbb{Z})\simeq \mathrm{Maps}(\Gamma e_0,\mathbb{Z})\simeq \mathrm{Coind}^\Gamma_{\Gamma_0^p}(\mathbb{Z})
\end{align*}
and
\begin{align*}
\mathcal{F}(\mathcal{V},\mathbb{Z}) \simeq \mathrm{Coind}^\Gamma_{\Gamma_0}(\mathbb{Z}) \ \oplus \mathrm{Coind}^\Gamma_{\Gamma_0^\prime}(\mathbb{Z}).
\end{align*}
Consequently, the short exact sequence \eqref{sequencevanderput} induces an exact sequence in cohomology 
\begin{align*}
\mathrm{H}^1(\Gamma_0,\mathbb{Z})\oplus \mathrm{H}^1(\Gamma_0^\prime,\mathbb{Z}) \rightarrow \mathrm{H}^{2}(\Gamma,\mathcal{A}^\times_{1}/\mathbb{C}_p^\times)\rightarrow \mathrm{H}^{2}(\Gamma_0^p,\mathbb{Z})
\end{align*}
which is compatible with the action of the Hecke operators in $\mathrm{T}_n$ for $p\nmid n$. Finally, the arithmetic groups $\Gamma_0$, $\Gamma_0^\prime$ and $\Gamma_0^p$ are of type $\mathrm{FP}_\infty$ such that the integer cohomology groups are finitely generated abelian groups. Thus, the annihilator $\mathrm{Ann}_\mathbb{T}\left(\mathrm{H}^{2}(\Gamma,\mathcal{A}^\times_{1}/\mathbb{C}_p^\times)\right)$ must be non-trivial by the sequence above. We have proven:
\begin{lemma}
The ideal $\mathrm{Ann}_\mathbb{T}(\mathrm{H}^2(\Gamma,\mathcal{A}_1^\times))$ is non-trivial.
\end{lemma}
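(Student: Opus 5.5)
The plan is to combine the two annihilator statements established just above via the long exact sequence attached to
\begin{align*}
0\rightarrow \mathbb{C}_p^\times \rightarrow \mathcal{A}_{1}^\times \rightarrow \mathcal{A}_{1}^\times/\mathbb{C}_p^\times\rightarrow 0,
\end{align*}
which yields the exact sequence of $\mathbb{T}$-modules
\begin{align*}
\mathrm{H}^2(\Gamma,\mathbb{C}_p^\times)\xrightarrow{\ \iota\ }  \mathrm{H}^2(\Gamma,\mathcal{A}_{1}^\times) \xrightarrow{\ \pi\ }  \mathrm{H}^2(\Gamma,\mathcal{A}_{1}^\times/\mathbb{C}_p^\times).
\end{align*}
The first step is to check that both maps are $\mathbb{T}$-equivariant. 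This holds because the sequence of coefficient modules is a sequence of $G$-modules (indeed of $\GL_2(\mathbb{Q}_p)$-modules, with $\mathbb{C}_p^\times$ carrying the trivial action). The Hecke operators are defined at the cochain level on a projective $\mathbb{Z}[G]$-resolution, and they commute with maps induced by $G$-equivariant coefficient homomorphisms. The nonvanishing ideals then come from the earlier discussion.

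\textbf{The ideal $I_1$.} Set $I_1\coloneq N\cdot \mathrm{Ann}_\mathbb{T}(\mathrm{H}^2(\Gamma,\mathbb{Z}))$. By the universal coefficient argument, $I_1\subset \mathrm{Ann}_\mathbb{T}(\mathrm{H}^2(\Gamma,\mathbb{C}_p^\times))$, and $I_1\neq 0$.

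\textbf{The ideal $I_2$.} Let $I_2$ be a nonzero ideal contained in $\mathrm{Ann}_\mathbb{T}(\mathrm{H}^2(\Gamma,\mathcal{A}_1^\times/\mathbb{C}_p^\times))$, produced by the van der Put sequence \eqref{sequencevanderput}. I would make $I_2$ explicit as follows. The outer terms $\mathrm{H}^1(\Gamma_0,\mathbb{Z})\oplus\mathrm{H}^1(\Gamma_0^\prime,\mathbb{Z})$ and $\mathrm{H}^2(\Gamma_0^p,\mathbb{Z})$ are finitely generated. The Hecke operators $\mathrm{T}_n$ with $p\nmid n$ act on them compatibly, so each outer term has a nonzero annihilator in the subalgebra generated by these operators. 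Indeed, the image of this subalgebra in the endomorphism ring of a finitely generated abelian group is a finitely generated $\mathbb{Z}$-module, while the subalgebra itself is a polynomial-type algebra of infinite rank, so the kernel is nonzero. The product $J\cdot J'$ of the two annihilators then kills the middle term of the three-term exact sequence, and I take $I_2\coloneq J\cdot J'$.

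\textbf{Combining.} For $t_1\in I_1$, $t_2\in I_2$ and $x\in \mathrm{H}^2(\Gamma,\mathcal{A}_1^\times)$, we have $\pi(t_2x)=t_2\pi(x)=0$. Exactness gives $t_2x=\iota(y)$ for some $y$, so $t_1t_2x=\iota(t_1y)=0$. Hence $I_1I_2\subset \mathrm{Ann}_\mathbb{T}(\mathrm{H}^2(\Gamma,\mathcal{A}_1^\times))$.

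The main obstacle is showing that $I_1I_2\neq 0$, since a product of nonzero ideals could a priori vanish. I would settle this by noting that $\mathbb{T}$ is a commutative ring without zero divisors: it is a polynomial ring over $\mathbb{Z}$ in the operators $\mathrm{T}_l$ and $\mathrm{T}(l,l)$ for primes $l$, a consequence of the structure recalled from \cite{miyake}. Failing that, it suffices to choose concrete elements of the subalgebra generated by the $\mathrm{T}_n$ with $p\nmid n$, for instance nonzero polynomials in finitely many $\mathrm{T}_l$ (characteristic polynomials acting on the finite-rank outer modules, via Cayley--Hamilton). A product of such polynomials in independent variables is visibly nonzero.
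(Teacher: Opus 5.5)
Your proposal is correct and follows the same route as the paper. The paper also combines the universal-coefficient annihilator $N\cdot\mathrm{Ann}_\mathbb{T}(\mathrm{H}^2(\Gamma,\mathbb{Z}))$ of $\mathrm{H}^2(\Gamma,\mathbb{C}_p^\times)$ with an annihilator of $\mathrm{H}^2(\Gamma,\mathcal{A}_1^\times/\mathbb{C}_p^\times)$ obtained from the van der Put sequence, via the exact sequence attached to $0\rightarrow\mathbb{C}_p^\times\rightarrow\mathcal{A}_1^\times\rightarrow\mathcal{A}_1^\times/\mathbb{C}_p^\times\rightarrow 0$; it leaves implicit that the product of these ideals is nonzero, which you address.

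One caution about that step: $\mathbb{T}$ is not literally a polynomial ring in the $\mathrm{T}_l$ and $\mathrm{T}(l,l)$. For instance, $\mathrm{T}_p$ is a single coset with $\mathrm{T}_p^2=\mathrm{T}(p,p)$, and $\mathrm{T}(l,l)$ is invertible in $\mathbb{T}$. So rely on your fallback instead, most cleanly by taking all annihilating elements to be nonzero polynomials in a single $\mathrm{T}_l$ with $l\neq p$. Since $\mathbb{T}$ is graded by reduced norm and $\mathrm{T}_l^k$ is a nonzero nonnegative combination of double cosets homogeneous of degree $l^k$, the subring $\mathbb{Z}[\mathrm{T}_l]\subset\mathbb{T}$ is a polynomial ring, and hence the product is nonzero.
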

\begin{remark}
If $B$ is split, then $\mathrm{H}^1(\Gamma_0,\mathbb{Z})=0=\mathrm{H}^1(\Gamma_0^\prime,\mathbb{Z})$ and $\mathrm{H}^2(\Gamma_0^p,\mathbb{Z})$ is finite. Therefore, $\mathrm{H}^2(\Gamma,\mathcal{A}_1^\times/\mathbb{C}_p^\times)$ must be finite as well and, thus,
\begin{align*}
\mathrm{Ann}_\mathbb{T}(\mathrm{H}^2(\Gamma,\mathcal{A}_1^\times))\otimes \mathbb{Q}=\mathrm{Ann}_\mathbb{T}(\mathrm{H}^2(\Gamma,\mathbb{Z}))\otimes \mathbb{Q}.
\end{align*}
We have $\Gamma_0(p)=\Gamma_0^p$ and there is a monomorphism with finite cokernel $\mathrm{H}^1(\Gamma_0(p),\mathbb{Z})\rightarrow \mathrm{H}^2(\Gamma,\mathbb{Z})$ which commutes with the Hecke operators away from $p$. Finally, Eichler--Shimura theory asserts that the annihilator of the space of modular forms of weight two and level $\Gamma_0(p)$ with integer Fourier coefficients kills the group $\mathrm{H}^1(\Gamma_0(p),\mathbb{Z})$ and hence, induces an ideal contained in $\mathrm{Ann}_\mathbb{T}(\mathrm{H}^2(\Gamma,\mathbb{Z}))$. We will study the Hecke module $\mathrm{H}^2(\Gamma,\mathbb{Z})$ in the non-split case more carefully in Section \ref{modularitxtheoremchapter}.
\end{remark}
\subsubsection{The four-dimensional case}
Let 
\begin{align}
\mathcal{M}_{\mathrm{rq}}^\times\coloneq \mathrm{div}^{-1}\left(\mathrm{Div}^\dagger_{\mathrm{rq}}(\mathcal{H}_p^2)\right)
\end{align}
be the $G^2$-module of non-trivial rigid meromorphic functions whose associated divisor is rational quadratic. Proposition \ref{propdivissurjective} implies the exactness of the short exact sequence of $G^2$-modules
\begin{align}\label{sesforrq}
0\rightarrow \mathcal{A}_{2}^\times\rightarrow \mathcal{M}_{\mathrm{rq}}^\times\xrightarrow{\mathrm{div}} \mathrm{Div}^\dagger_{\mathrm{rq}}(\mathcal{H}_p^2) \rightarrow 0
\end{align}
and we obtain an exact sequence of Hecke modules
\begin{align*}
\mathrm{H}^2(\Gamma^2,\mathcal{M}_{\mathrm{rq}}^\times)\rightarrow  \mathrm{H}^2(\Gamma^2,\mathrm{Div}^\dagger_{\mathrm{rq}}(\mathcal{H}_p^2))\xrightarrow{\kappa} \mathrm{H}^{3}(\Gamma^2,\mathcal{A}_{2}^\times).
\end{align*}
The short exact sequence 
\begin{align*}
0\rightarrow \mathbb{C}_p^\times\rightarrow \mathcal{A}_{2}^\times \rightarrow \mathcal{A}_{2}^\times/\mathbb{C}_p^\times \rightarrow  0
\end{align*}
induces an exact sequence in cohomology
\begin{align*}
\mathrm{H}^3(\Gamma^2,\mathbb{C}_p^\times)\rightarrow \mathrm{H}^3(\Gamma^2,\mathcal{A}_{2}^\times) \xrightarrow{\overline{(-)}} \mathrm{H}^3(\Gamma^2,\mathcal{A}_{2}^\times/\mathbb{C}_p^\times).
\end{align*}
We can prove:
\begin{lemma}\label{computationofh3gamma2}
The group $\mathrm{H}^3(\Gamma^2,\mathbb{C}_p^\times)$ is finite.
\begin{proof}
Since $\Gamma$ is of type $\mathrm{FP}_\infty$, the same is true for the product $\Gamma^2$ and we can apply the Universal Coefficient Theorem to deduce a split exact sequence
\begin{align*}
0\rightarrow \mathrm{H}^3(\Gamma^2,\mathbb{Z})\otimes \mathbb{C}_p^\times \rightarrow \mathrm{H}^3(\Gamma^2,\mathbb{C}_p^\times)\rightarrow \left(\mathrm{finite}\right)\rightarrow 0.
\end{align*}
Next, we apply the \emph{K\"unneth Formula} (see Appendix \ref{appendixkunneth}) to obtain a split exact sequence 
\begin{align*}
0\rightarrow \bigoplus_{i+j=3} \mathrm{H}^i(\Gamma,\mathbb{Z})\otimes \mathrm{H}^j(\Gamma,\mathbb{Z}) \rightarrow \mathrm{H}^3(\Gamma^2,\mathbb{Z}) \rightarrow \left(\mathrm{finite}\right)\rightarrow 0.
\end{align*}
Margulis' normal subgroup theorem (\cite{margulis}, Chapter VIII, Theorem 2.6.) implies that the first integer cohomology group of $\Gamma$ vanishes and, thus, we are reduced to show that $\mathrm{H}^3(\Gamma,\mathbb{Z})$ is finite. Theorem 3 in Chapter II.1.4 of \cite{SerreTrees} yields that the group $\Gamma$ is the amalgamation of $\Gamma_0$ and $\Gamma_0^\prime$ along $\Gamma_0^p$. Hence, we obtain a Mayer--Vietoris exact sequence 
\begin{align*}
\mathrm{H}^2(\Gamma_0,\mathbb{Z})\oplus \mathrm{H}^2(\Gamma_0^\prime,\mathbb{Z})  \xrightarrow{d} \mathrm{H}^2(\Gamma_0^p,\mathbb{Z}) \rightarrow \mathrm{H}^3(\Gamma,\mathbb{Z})\rightarrow \mathrm{H}^3(\Gamma_0,\mathbb{Z})\oplus \mathrm{H}^3(\Gamma_0^\prime,\mathbb{Z})
\end{align*}
where $d$ is the composition of the restriction maps from $\Gamma_0$ respectively $\Gamma_0^\prime$ to $\Gamma_0^p$ and the difference map. The right-most groups are finite by \cite{hughes}, Section 4.1, Corollary 4.1. Moreover, if $B$ is split, then $\mathrm{H}^2(\Gamma_0^p,\mathbb{Z})$ is finite and the finiteness of $\mathrm{H}^3(\Gamma,\mathbb{Z})$ follows. Conversely, if $B$ is non-split, then the groups $\mathrm{H}^2(\Gamma_0,\mathbb{Z})$, $\mathrm{H}^2(\Gamma_0^\prime,\mathbb{Z}) $ and $\mathrm{H}^2(\Gamma_0^p,\mathbb{Z})$ are (up to torsion) infinite cyclic. However, the map $d$ can not be the zero map which can be seen by considering the restriction-corestriction sequence.\footnote{Note that $\Gamma_0^p$ is of finite index in both $\Gamma_0$ and $\Gamma_0^\prime$, respectively.} Consequently, the cokernel of $d$ is finite and the finiteness of $\mathrm{H}^3(\Gamma,\mathbb{Z})$ follows in this case, too.
\end{proof}
\end{lemma}
Next, we study the cohomology of $\Gamma^2$ with values in $\mathcal{A}_{2}^\times/\mathbb{C}_p^\times$. One of the central results of \cite{GehrmannSprehe} is the following theorem:
\begin{theorem}\label{propropropbloed}
The map
\begin{align*}
 \mathcal{A}_{1}^\times/\mathbb{C}_p^\times   \oplus \mathcal{A}_{1}^\times/\mathbb{C}_p^\times \rightarrow \mathcal{A}_{2}^\times/\mathbb{C}_p^\times, \qquad (f_1,f_2)\mapsto f_1(z_1)\cdot f_2(z_2)
\end{align*}
is an isomorphism of $G^2$-modules.
\end{theorem}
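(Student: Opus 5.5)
The plan is to prove injectivity and equivariance directly, and then to reduce surjectivity to a rigidity statement for the van der Put cochains of the one‑variable slices of a two‑variable unit. Equivariance is immediate: for $\gamma=(\gamma_1,\gamma_2)\in G^2$ one has $\gamma\centerdot(f_1(z_1)f_2(z_2))=(\gamma_1\centerdot f_1)(z_1)\cdot(\gamma_2\centerdot f_2)(z_2)$.

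For injectivity, suppose $f_1(z_1)\cdot f_2(z_2)=c$ is constant on $\mathcal{H}_p^2$. Fixing any $z_2=b\in\mathcal{H}_p$ shows that $f_1=c\cdot f_2(b)^{-1}$ is constant, and symmetrically $f_2$ is constant. So the kernel is trivial in $\mathcal{A}_1^\times/\mathbb{C}_p^\times\oplus\mathcal{A}_1^\times/\mathbb{C}_p^\times$.

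For surjectivity, take $f\in\mathcal{A}_2^\times$ and fix a base point $a\in\mathcal{H}_p$. Set $f_1(z)\coloneq f(z,a)$ and $f_2(z)\coloneq f(a,z)/f(a,a)$; both lie in $\mathcal{A}_1^\times$, since restricting to $\mathcal{H}_p\times\{a\}$ and $\{a\}\times\mathcal{H}_p$ preserves invertibility. Put $g\coloneq f/(f_1(z_1)f_2(z_2))\in\mathcal{A}_2^\times$. Then $g(z,a)=1=g(a,z)$ for all $z$, and it suffices to show $g\equiv 1$.

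For each $b\in\mathcal{H}_p$, the slice $g_b\coloneq g(-,b)$ lies in $\mathcal{A}_1^\times$, and van der Put's map sends it to a harmonic cochain $\varphi_b\in\mathbf{H}(\mathcal{T},\mathbb{Z})$. The key claim is that, for each fixed oriented edge $e$, the function $b\mapsto\varphi_b(e)$ is locally constant on $\mathcal{H}_p$. Since $\mathcal{H}_p$ is connected and $\varphi_a=0$ (because $g_a\equiv1$), this gives $\varphi_b=0$ for all $b$. By the exactness of $0\rightarrow\mathbb{C}_p^\times\rightarrow\mathcal{A}_1^\times\rightarrow\mathbf{H}(\mathcal{T},\mathbb{Z})\rightarrow0$, each $g_b$ is then constant in $z_1$, so $g(z_1,b)=g(a,b)=1$.

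To prove the key claim, I would use the description of $\varphi(e)$ as a residue: $\varphi_b(e)$ is, up to sign, the residue of the differential $g_b^{-1}\,\partial_{z_1}g_b\,dz_1$ along the open annulus in $\mathcal{H}_p$ corresponding to $e$. Equivalently, it is the slope of $\log_p|g_b|$ across that annulus, i.e.\ the difference of the constant values of $\log_p|g_b|$ on the two affinoids attached to $s(e)$ and $t(e)$.

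Writing $g^{-1}\partial_{z_1}g$ on (annulus)$\times U$, for $U$ a small affinoid disc around $b_0$, as a Laurent series in $z_1$ whose coefficients are rigid analytic functions of $z_2\in U$, the residue is the coefficient of $(z_1-c)^{-1}$. That coefficient is a rigid analytic function of $z_2$ on $U$ which takes values in $\mathbb{Z}$. An integer‑valued analytic function on a connected affinoid is constant, which gives the local constancy.

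I expect this step to be the main obstacle: one must verify carefully that van der Put's cochain is computed by this residue, and that the Laurent expansion of $g^{-1}\partial_{z_1}g$ on the product of an annulus with an affinoid has coefficients analytic in $z_2$. If the residue route gets messy, I would instead argue with absolute values. For $z_2$ varying in a small enough disc, the function $g$ on a product of two standard affinoids $\mathcal{A}(v)\times U$ has constant absolute value on each factor after normalization, and its reduction is a unit on the product of the reductions. By Rosenlicht's lemma on units of products of connected varieties over $\overline{\mathbb{F}}_p$, this reduction splits as a product of units in the two factors, which again forces $\varphi_b(e)$ to be independent of $b\in U$.
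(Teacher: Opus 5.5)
The paper does not prove this statement; it imports it from \cite{GehrmannSprehe}. So there is no internal argument to compare with, and your proposal has to stand on its own. It does, apart from one step that needs rephrasing.

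Equivariance, injectivity and the normalisation to $g$ with $g(-,a)=1=g(a,-)$ are fine. The residue route also works as you describe it. For an edge $e$ with annulus $A_e=\{r<|z_1-c|<R\}$, fix a single circle $A'=\{|z_1-c|=\rho\}\subset A_e$ with $\rho\in|\mathbb{C}_p^\times|$. For any affinoid $U\subset\mathcal{H}_p$ one has $\mathcal{O}(A'\times U)=\mathcal{O}(U)\langle X,X^{-1}\rangle$ with $X=(z_1-c)/\pi$ and $|\pi|=\rho$. Hence the residue of $g^{-1}\partial_{z_1}g\,dz_1$ is an element of $\mathcal{O}(U)$ whose value at $b$ is $\mathrm{res}_{A'}(dg_b/g_b)=\pm\varphi_b(e)$. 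An invertible function on an open annulus has one dominant exponent throughout, so nothing depends on the choice of $\rho$. The Rosenlicht fallback is not needed.

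\textbf{The step to rephrase.} The problem is the passage from local constancy to constancy. Constancy on a small disc around every $b_0$, combined with connectedness of $\mathcal{H}_p$, proves nothing:
\begin{itemize}
\item the metric topology on $\mathcal{H}_p(\mathbb{C}_p)$ is totally disconnected, and the indicator function of a disc is locally constant in this sense;
\item rigid connectedness only controls functions that are constant on the members of an admissible covering, and small discs around all points do not form one.
\end{itemize}
Your argument never needed $U$ to be small, though. Take $U$ a connected affinoid containing both $a$ and $b$, e.g.\ the preimage under the reduction map of a finite ball in $\mathcal{T}$. Then $b\mapsto\varphi_b(e)$ is an analytic function on $U$ with values in $\mathbb{Z}$, hence constant. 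Justify that last point by finiteness of fibres, not by discreteness, since $\mathbb{Z}$ is not discrete in $\mathbb{C}_p$. A non-constant analytic function on a connected one-dimensional affinoid has finite fibres, hence uncountable image. This gives $\varphi_b(e)=\varphi_a(e)=0$ directly, with no appeal to local constancy.

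\textbf{Comparison with the cited source.} In \cite{GehrmannSprehe} the statement appears as a central result alongside the surjectivity of the divisor map on $\mathcal{H}_p^2$ (Proposition \ref{propdivissurjective}). Your proof is short and self-contained by comparison: it uses only van der Put's one-variable exact sequence and the analytic dependence of residues on a parameter. It also uses the structure of the first factor only. The second factor enters solely through the fact that integer-valued analytic functions on its connected affinoids are constant.
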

We are reduced to computing the groups $\mathrm{H}^3(\Gamma^2,\mathcal{A}_{1}^\times/\mathbb{C}_p^\times\otimes \mathbb{Z})$ and $\mathrm{H}^3(\Gamma^2, \mathbb{Z}\otimes\mathcal{A}_{1}^\times/\mathbb{C}_p^\times)$. The K\"unneth Formula yields a split exact sequence
\begin{align*}
0&\rightarrow \bigoplus_{i+j=3} \mathrm{H}^i(\Gamma,\mathcal{A}_{1}^\times/\mathbb{C}_p^\times)\otimes \mathrm{H}^j(\Gamma,\mathbb{Z})\rightarrow \mathrm{H}^3(\Gamma^2,\mathcal{A}_{1}^\times/\mathbb{C}_p^\times\otimes \mathbb{Z}) \\
&\rightarrow \bigoplus_{i+j=4} \mathrm{Tor}_1^\mathbb{Z}\left(\mathrm{H}^i(\Gamma,\mathcal{A}_{1}^\times/\mathbb{C}_p^\times),\mathrm{H}^{j}(\Gamma,\mathbb{Z})\right) \rightarrow 0
\end{align*}
and an analogous sequence for $\mathrm{H}^3(\Gamma^2,\mathbb{Z}\otimes \mathcal{A}_{1}^\times/\mathbb{C}_p^\times)$. The same proof as in \cite{DarmonVonksingularmoduli}, Section 1.3, Lemma 1.9 implies that $\mathrm{H}^0(\Gamma,\mathcal{A}_1^\times/\mathbb{C}_p^\times)=0$ and recall that $\mathrm{H}^1(\Gamma,\mathbb{Z})=0$ by Margulis' theorem. This shows that we have:
\begin{lemma}
There exists an integer $k\geq 1$ such that 
\begin{align*}
 1\times \left(k\cdot\mathrm{Ann}_\mathbb{T}\left(\mathrm{H}^0(\Gamma,\mathbb{Z})\right)\cdot \mathrm{Ann}_\mathbb{T}\left(\mathrm{H}^2(\Gamma,\mathbb{Z})\right)\right)\subset \mathrm{Ann}_{\mathbb{T}\times \mathbb{T}}\left(\mathrm{H}^3(\Gamma^2,\mathcal{A}_{1}^\times/\mathbb{C}_p^\times\otimes \mathbb{Z})\right)
\end{align*}
and
\begin{align*}
 \left(k\cdot\mathrm{Ann}_\mathbb{T}\left(\mathrm{H}^0(\Gamma,\mathbb{Z})\right)\cdot \mathrm{Ann}_\mathbb{T}\left(\mathrm{H}^2(\Gamma,\mathbb{Z})\right)\right)\times 1\subset \mathrm{Ann}_{\mathbb{T}\times \mathbb{T}}\left(\mathrm{H}^3(\Gamma^2,\mathbb{Z}\otimes \mathcal{A}_{1}^\times/\mathbb{C}_p^\times)\right).
\end{align*}
\end{lemma}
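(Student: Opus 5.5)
We will read off the statement from the Künneth sequence displayed just before the lemma, treating the two tensor factors separately; by the symmetry of the situation (switching the roles of the two copies of $\Gamma$) it suffices to prove the first inclusion. Write $A\coloneq \mathcal{A}_1^\times/\mathbb{C}_p^\times$, viewed as a module for the first factor of $\Gamma^2$ and tensored with the trivial module $\mathbb{Z}$ for the second factor, so that $1\times \mathrm{T}$ acts on $\mathrm{H}^j(\Gamma,\mathbb{Z})$ through the second factor. The plan is to check that every graded piece in the Künneth sequence is killed, and then to pay a factor $k$ for the extension.

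First, the sub-term $\bigoplus_{i+j=3}\mathrm{H}^i(\Gamma,A)\otimes \mathrm{H}^j(\Gamma,\mathbb{Z})$ has four summands.
\begin{itemize}
\item For $j=3$ the factor is $\mathrm{H}^0(\Gamma,A)=0$.
\item For $j=2$ the summand is killed by $1\times \mathrm{Ann}_\mathbb{T}(\mathrm{H}^2(\Gamma,\mathbb{Z}))$.
\item For $j=1$ the factor is $\mathrm{H}^1(\Gamma,\mathbb{Z})=0$ by Margulis.
\item For $j=0$ the summand is killed by $1\times\mathrm{Ann}_\mathbb{T}(\mathrm{H}^0(\Gamma,\mathbb{Z}))$.
\end{itemize}
This uses that the Künneth map is compatible with the Hecke action, $(1\times \mathrm{T})(x\otimes y)=x\otimes \mathrm{T}y$, which I would verify on the level of cochains via the explicit chain map defining $\mathrm{T}_v\times\mathrm{T}_w$ together with a product resolution $P_\bullet\otimes P_\bullet$ of $\Gamma^2$ coming from a $G$-projective resolution. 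Since $\mathrm{Ann}_\mathbb{T}(\mathrm{H}^0)\cdot\mathrm{Ann}_\mathbb{T}(\mathrm{H}^2)$ is contained in both annihilators, the product ideal kills the whole sub-term.

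Next, the quotient term $\bigoplus_{i+j=4}\mathrm{Tor}_1^\mathbb{Z}(\mathrm{H}^i(\Gamma,A),\mathrm{H}^j(\Gamma,\mathbb{Z}))$ is handled as follows. Because $\mathrm{H}^j(\Gamma,\mathbb{Z})$ is finitely generated, each Tor group is killed by the exponent of the torsion of $\mathrm{H}^j(\Gamma,\mathbb{Z})$. Only $j\le 4$ occurs, so we can let $k_1$ be the product of these exponents for $j=0,\dots,4$. Then $k_1$ kills the quotient, independently of any Hecke operator.

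Finally, since the sequence is Hecke-equivariant, an element $\mathrm{T}=k_1\cdot \mathrm{T}_a\mathrm{T}_b$ with $\mathrm{T}_a\mathrm{T}_b$ in the product ideal acts on any class $x$ as follows. First, $\mathrm{T}_a\mathrm{T}_b\, x$ lies in the image of the sub-term, because the quotient is killed after multiplying by $k_1$ once. More precisely, $k_1 x$ maps to zero in the quotient, so $k_1x$ lies in the sub-term. Hence $\mathrm{T}_a\mathrm{T}_b(k_1x)=0$. Taking $k=k_1$ gives the claim.

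The main obstacle is the Hecke-equivariance of the Künneth sequence, particularly of the Tor term and the splitting. I avoid needing equivariance of the splitting by using only the filtration argument above. For that argument it suffices that the sub-term inclusion intertwines $1\times\mathrm{T}$ with $\mathrm{id}\otimes\mathrm{T}$. This is where the appendix on the Künneth formula would be invoked, after checking that the cross product commutes with the coset-sum chain maps defining the Hecke operators.
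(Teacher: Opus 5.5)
Your proposal is correct and matches the paper's argument: the paper also reads the lemma off the K\"unneth sequence, uses $\mathrm{H}^0(\Gamma,\mathcal{A}_1^\times/\mathbb{C}_p^\times)=0$ (by the argument of Darmon--Vonk, Lemma 1.9) and $\mathrm{H}^1(\Gamma,\mathbb{Z})=0$ (Margulis) so that only the summands with $j=0$ and $j=2$ survive, and absorbs the $\mathrm{Tor}$-term into an integer $k$; you additionally spell out the filtration step and the Hecke-compatibility of the cross product, which the paper leaves implicit. One slip: delete the sentence claiming that $\mathrm{T}_a\mathrm{T}_b\,x$ lies in the image of the sub-term, since the correct statement is the one you give next, that $k_1x$ lies in that image.
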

\begin{corollary}
Let $M$ be the exponent of the finite group $\mathrm{H}^3(\Gamma^2,\mathbb{C}_p^\times)$ and let
\begin{align*}
\mathrm{T}\in k\cdot M\cdot \mathrm{Ann}_\mathbb{T}\left(\mathrm{H}^0(\Gamma,\mathbb{Z})\right)\cdot \mathrm{Ann}_\mathbb{T}\left(\mathrm{H}^2(\Gamma,\mathbb{Z})\right)
\end{align*}
be a Hecke operator. Then the class $(\mathrm{T}\times 1)\cdot [\mathcal{D}_1]$ lifts to a class in $\mathrm{H}^2(\Gamma^2,\mathcal{M}_{\mathrm{rq}}^\times)$.
\begin{proof}
To simplify notation, we identify the class $[\mathcal{D}_1]$ with the cocycle $\mathcal{D}_1$. Let $\mathrm{T}=M\cdot \mathrm{T}^\prime$ with $\mathrm{T}^\prime\in k\cdot\mathrm{Ann}_\mathbb{T} \left(\mathrm{H}^0(\Gamma,\mathbb{Z})\right)\cdot \mathrm{Ann}_\mathbb{T}\left(\mathrm{H}^2(\Gamma,\mathbb{Z})\right)$. Recall from Proposition \ref{lemmacomputation} that we have $(\mathrm{T}^\prime\times 1)\cdot \mathcal{D}_1=(1\times \mathrm{T}^\prime)\cdot \mathcal{D}_1$. This equality together with the compatibility of $\kappa$ and $\overline{(-)}$ with the Hecke operators and the previous lemma implies that the class $\kappa\left((\mathrm{T}^\prime\times 1)\cdot \mathcal{D}_1\right)$ belongs to the image of the homomorphism $\mathrm{H}^3(\Gamma^2,\mathbb{C}_p^\times)\rightarrow \mathrm{H}^3(\Gamma^2,\mathcal{A}_2^\times)$. The corollary follows.
\end{proof}
\end{corollary}
We end this discussion by defining a certain ideal of the Hecke algebra $\mathbb{T}$:
\begin{definition}\label{definitionip}
Let $\mathcal{I}_p\subset \mathbb{T}$ be the intersection of the non-trivial ideals $\mathrm{Ann}_\mathbb{T}\left(\mathrm{H}^2(\Gamma,\mathcal{A}_1^\times)\right)$ and $k\cdot M\cdot \mathrm{Ann}_\mathbb{T}\left(\mathrm{H}^0(\Gamma,\mathbb{Z})\right)\cdot \mathrm{Ann}_\mathbb{T}\left(\mathrm{H}^2(\Gamma,\mathbb{Z})\right)$. Then, by definition, for every $\mathrm{T}\in \mathcal{I}_p$ 
\begin{enumerate}
\item[\textbullet]the class $(\mathrm{T}\times 1)\cdot [\mathcal{D}_1]$ lifts to a class with values in $\mathcal{M}_{\mathrm{rq}}^\times$ and 
\item[\textbullet]for every RM point $\omega\in \mathcal{H}_p$ the class $\mathrm{T}\cdot [\mathcal{D}_\omega]$ lifts to a class with values in $\mathcal{M}_1^\times$.
\end{enumerate}
\end{definition}

\section{The modularity theorem}\label{modularitxtheoremchapter}
Classically, coefficients of generating series considered in the \emph{Kudla program} \cite{kudlaspecialcyclesand} take values in the first Chow group of an orthogonal Shimura variety. In the modularity theorem featured in this article, the role of the Chow group is played by the cokernel of the divisor map. That is, the replacement for the Chow group is defined by the exact sequence
\begin{align}
\mathrm{H}^2(\Gamma^2,\mathcal{M}_{\mathrm{rq}}^\times)\rightarrow \mathrm{H}^2(\Gamma^2,\mathrm{Div}^\dagger_\mathrm{rq}(\mathcal{H}_p^2))\rightarrow \mathrm{CH}(\Gamma^2)\rightarrow 0.
\end{align} 
To simplify notation, we identify every cocycle $\mathcal{D}_n$ with its associated class and, moreover, denote by $\mathcal{D}_n$ the image in the Chow group. \\
Recall that if $V$ is a $\mathbb{Q}$-vector space and $\Lambda\subset \SL_2(\mathbb{Z})$ is a congruence subgroup, then a generating series $\sum_{n\geq 0} v_n\cdot q^n\in V\left[[q]\right]$ is said to be \emph{modular of weight $k$ and level $\Lambda$} if for every $\mathbb{Q}$-linear map $\alpha\colon V \rightarrow \mathbb{Q}$ the series
\begin{align*}
\sum_{n\geq 0} \alpha(v_n)\cdot q^n\in \mathbb{Q}\left[[q]\right]
\end{align*}
is the $q$-expansion of a modular form of weight $k$ and level $\Lambda$. For later reference, we note:
\begin{lemma}\label{lemmodularity}
Let $V$ be a $\mathbb{Q}$-vector space, $U\subset V$ be a subspace, $\Lambda\subset \SL_2(\mathbb{Z})$ be a congruence subgroup, $k\geq 1$ be an even integer and $\sum_{n\geq 1} v_n \cdot q^n\in V\left[[q]\right]$ be a generating series.
\begin{enumerate}
\item[(1)] If there exists a vector $v_0\in V$ such that the series $\sum_{n\geq 0} v_n \cdot q^n$  is modular of weight $k$ and level $\Lambda$,  then $v_0\in V$ is unique with this property.
\item[(2)]Assume that $v_n\in U$ for all $n\geq 1$ and that there exists a constant term $v_0\in V$ such that the series $\sum_{n\geq 0} v_n \cdot q^n$ is modular of weight $k$ and level $\Lambda$. Then $v_0\in U$ and the series $\sum_{n\geq 0} v_n \cdot q^n\in U\left[[q]\right]$ is modular of weight $k$ and level $\Lambda$.
\end{enumerate}
\begin{proof}
Both statements can be easily deduced from the fact that there is no non-zero constant modular form of weight $k$ and level $\Lambda$.
\end{proof}
\end{lemma}
For an abelian group $A$, denote by $A_\mathbb{Q}$ the $\mathbb{Q}$-vector space $A\otimes \mathbb{Q}$. Let $\Gamma_0(p\cdot d_B)\subset \SL_2(\mathbb{Z})$ be the congruence subgroup of matrices which are upper triangular $\mathrm{mod} \ p\cdot d_B$. We can now state the main theorem of this section:
\begin{theorem}\label{modulatitytheorem}
There is a unique constant term $\mathcal{D}_0\in \mathrm{CH}(\Gamma^2)_\mathbb{Q}$ such that the generating series
\begin{align*}
\mathcal{D}_0+\sum_{n\geq 1} \  \mathcal{D}_n \cdot q^n\in \mathrm{CH}(\Gamma^2)_\mathbb{Q} \left[[q]\right]
\end{align*}
is modular of weight two and level $\Gamma_0(p\cdot d_B)$.
\end{theorem}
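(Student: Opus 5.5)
Uniqueness of $\mathcal{D}_0$ follows from Lemma \ref{lemmodularity}(1), so the content is existence. The plan is to reduce the statement to a statement about the Hecke module generated by $[\mathcal{D}_1]$ in $\mathrm{CH}(\Gamma^2)_\mathbb{Q}$. By Proposition \ref{lemmacomputation}, $\mathcal{D}_n=(\mathrm{T}_n\times 1)\cdot\mathcal{D}_1$, so the image $W$ of $\mathcal{KM}(\Gamma^2)$ in $\mathrm{CH}(\Gamma^2)_\mathbb{Q}$ is a cyclic module over $\mathbb{T}_\mathbb{Q}$ (acting through the first factor). By Definition \ref{definitionip}, every $\mathrm{T}\in\mathcal{I}_p$ satisfies that $(\mathrm{T}\times 1)\mathcal{D}_1$ lifts to $\mathrm{H}^2(\Gamma^2,\mathcal{M}_{\mathrm{rq}}^\times)$, hence vanishes in $\mathrm{CH}(\Gamma^2)$. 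Thus $W$ is a quotient of $\mathbb{T}_\mathbb{Q}/\mathcal{I}_{p,\mathbb{Q}}$. By Lemma \ref{lemmodularity}(2) it suffices to prove modularity inside $W$.

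The key step is to identify $\mathbb{T}_\mathbb{Q}/\mathcal{I}_{p,\mathbb{Q}}$ with a quotient of the Hecke algebra acting on $M_2(\Gamma_0(pd_B))$. Up to the factors $k$, $M$ (invertible after $\otimes\mathbb{Q}$), $\mathcal{I}_p$ contains the product of $\mathrm{Ann}_\mathbb{T}(\mathrm{H}^0(\Gamma,\mathbb{Z}))$, $\mathrm{Ann}_\mathbb{T}(\mathrm{H}^2(\Gamma,\mathbb{Z}))$ and $\mathrm{Ann}_\mathbb{T}(\mathrm{H}^2(\Gamma,\mathcal{A}_1^\times))$. Rationally, $\mathrm{H}^0(\Gamma,\mathbb{Q})$ is the Eisenstein line ($\mathrm{T}_\ell$ acting by $\sigma(\ell)$-type eigenvalues), and $\mathrm{H}^2(\Gamma,\mathbb{Q})$ and $\mathrm{H}^2(\Gamma,\mathcal{A}_1^\times)_\mathbb{Q}$ are controlled via the Mayer--Vietoris sequence for $\Gamma=\Gamma_0*_{\Gamma_0^p}\Gamma_0'$ and the van der Put sequence by $\mathrm{H}^1$ of the arithmetic groups $\Gamma_0,\Gamma_0',\Gamma_0^p$ (Shimura curves of level $1$ and $p$ for $B$). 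By Eichler--Shimura plus Jacquet--Langlands, these are built from cusp forms of weight two for $\Gamma_0(pd_B)$ (new at $d_B$) and Eisenstein pieces. Hence the annihilator $\mathcal{J}$ of $M_2(\Gamma_0(pd_B))$ in $\mathbb{T}_\mathbb{Q}$ (defined via $\mathrm{T}_n\mapsto$ classical $T_n$, with the $p$ and $d_B$ relations (b),(c) matching $U_p$, $U_\ell$ up to twists) satisfies $\mathcal{J}^N\subset\mathcal{I}_{p,\mathbb{Q}}$ for some $N$. I expect this comparison to be the main obstacle: one must check semisimplicity (so that the power $\mathcal{J}^N$ can be replaced by $\mathcal{J}$), and handle the operators at $p$ and $d_B$ carefully, in particular that $\mathrm{T}(p,p)$ acts trivially and $\mathrm{T}_p$ acts as $\pm U_p$ compatibly on all pieces.

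Given that $W$ is a $\mathbb{T}_\mathbb{Q}/\mathcal{J}$-module generated by $\mathcal{D}_1$ with $\mathcal{D}_n=\mathrm{T}_n\mathcal{D}_1$, the modularity is formal: for a linear functional $\alpha$ on $W$, the map $\mathrm{T}\mapsto\alpha(\mathrm{T}\mathcal{D}_1)$ is a linear functional $\lambda$ on $\mathbb{T}_\mathbb{Q}/\mathcal{J}$, and the standard duality between the Hecke algebra and $M_2(\Gamma_0(pd_B))$ (pairing $(T,f)\mapsto a_1(Tf)$, perfect on $q$-expansion coefficients of index $\ge1$) gives a form $f_\alpha$ with $a_n(f_\alpha)=\lambda(\mathrm{T}_n)=\alpha(\mathcal{D}_n)$ for $n\ge1$. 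Its constant term depends linearly on $\alpha$, and since $W$ is finite-dimensional (a quotient of the finite-dimensional $\mathbb{T}_\mathbb{Q}/\mathcal{J}$) this linear map is given by a vector $\mathcal{D}_0\in W$; concretely $\mathcal{D}_0$ is obtained by expressing the constant-term functional on $M_2$ through the duality, i.e. $\mathcal{D}_0=\mathrm{T}_0\mathcal{D}_1$ for the element $\mathrm{T}_0\in\mathbb{T}_\mathbb{Q}/\mathcal{J}$ representing $f\mapsto a_0(f)$. Lemma \ref{lemmodularity}(2) then transports the result to $\mathrm{CH}(\Gamma^2)_\mathbb{Q}$.
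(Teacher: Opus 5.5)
Your reduction to the cyclic Hecke module $W=\mathbb{T}_\mathbb{Q}\cdot\mathcal{D}_1$ is fine. Your final duality step is also fine; it is what the paper does in Proposition \ref{prowstein} and the proof of Proposition \ref{propotoprovemodularity}. The step in between, which you flag yourself, is a genuine gap, and it cannot be closed in the way you suggest.

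The only information you use about $\mathcal{D}_1$ in $\mathrm{CH}(\Gamma^2)_\mathbb{Q}$ is that $\mathcal{I}_p$ kills it. But $\mathcal{I}_p$ is built from \emph{products} of annihilators. This is forced by the lifting argument, which passes through the extension $\mathrm{H}^3(\Gamma^2,\mathbb{C}_p^\times)\to\mathrm{H}^3(\Gamma^2,\mathcal{A}_2^\times)\to\mathrm{H}^3(\Gamma^2,\mathcal{A}_2^\times/\mathbb{C}_p^\times)$ and through several K\"unneth summands. So at best you get $\mathcal{J}^N\subset\mathcal{I}_{p,\mathbb{Q}}$ with $N\geq 2$, and $W$ is only known to be a cyclic quotient of $\mathbb{T}_\mathbb{Q}/\mathcal{J}^N$.

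That ring gives no control. It is non-reduced, and it is not even finite-dimensional. The $\mathrm{T}_l$ for $l\nmid p\cdot d_B$ are algebraically independent in $\mathbb{T}_\mathbb{Q}$. Also $\mathcal{J}$ lies in the Eisenstein maximal ideal $\mathfrak{m}$, whose residue field is $\mathbb{Q}$. Hence $\mathbb{T}_\mathbb{Q}/\mathcal{J}^2\twoheadrightarrow\mathbb{T}_\mathbb{Q}/\mathfrak{m}^2$, and there the classes of the $\mathrm{T}_l-(l+1)$ are linearly independent. So the series $\sum_n\lambda(\mathrm{T}_n)q^n$, for $\lambda\in\Hom_\mathbb{Q}(\mathbb{T}_\mathbb{Q}/\mathcal{J}^N,\mathbb{Q})$, span an infinite-dimensional space, and most of them are not modular.

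Semisimplicity of $\widetilde{\mathbb{T}}_\mathbb{Q}$, or of the Hecke action on $\mathrm{H}^0(\Gamma,\mathbb{Q})$ and $\mathrm{H}^2(\Gamma,\mathbb{Q})$, does not help. It is a statement about $\mathbb{T}_\mathbb{Q}/\mathcal{J}$ and says nothing about which quotient of $\mathbb{T}_\mathbb{Q}/\mathcal{J}^N$ the module $W$ is. So neither the finite-dimensionality of $W$ nor the modularity follows.

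What is missing is a Hecke-equivariant injection of $W$ (or of $\mathrm{CH}(\Gamma^2)_\mathbb{Q}$) into a module killed by a single annihilator rather than a product. The paper obtains this in three steps.
\begin{enumerate}
\item[(i)] $\mathrm{CH}(\Gamma^2)_\mathbb{Q}$ injects into $\mathrm{H}^3(\Gamma^2,\mathcal{A}_2^\times/\mathbb{C}_p^\times)_\mathbb{Q}$, because $\mathrm{H}^3(\Gamma^2,\mathbb{C}_p^\times)$ is finite.
\item[(ii)] By Theorem \ref{propropropbloed} and the K\"unneth formula, this group is built from $\mathrm{H}^1(\Gamma,\mathcal{A}_1^\times/\mathbb{C}_p^\times)_\mathbb{Q}\otimes\mathrm{H}^2(\Gamma,\mathbb{Q})$ and its mirror image.
\item[(iii)] Since $(\mathrm{T}\times 1)\mathcal{D}_1=(1\times\mathrm{T})\mathcal{D}_1$ (Proposition \ref{lemmacomputation}), you may let each Hecke operator act on the $\mathrm{H}^2(\Gamma,\mathbb{Q})$ tensor factor alone.
\end{enumerate}
This yields $\mathrm{CH}(\Gamma^2)_\mathbb{Q}\hookrightarrow\mathrm{H}^2(\Gamma,\mathbb{Q})^{2d}$ with $\mathcal{D}_n\mapsto\mathrm{T}_n u$. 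So the Hecke action on $W$ factors through $\mathbb{T}(\Gamma)_\mathbb{Q}$.

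What then remains is exactly Lemma \ref{keylemmamodularity}: $\mathcal{J}$ itself kills $\mathrm{H}^2(\Gamma,\mathbb{Q})$. The paper proves this using the Mayer--Vietoris map $\delta$, the relation $\mathrm{T}_p\circ\delta=\delta\circ(-w_p)$, and $\mathrm{T}_p=-w_p$ on weight-two forms of level $\Gamma_0(p)$. In the non-split case it adds Jacquet--Langlands and the Hecke-equivariant splitting off of the Eisenstein line $\ker(d)$. With this in hand, your last paragraph goes through as written.

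A minor point: the factor $\mathrm{Ann}_\mathbb{T}(\mathrm{H}^2(\Gamma,\mathcal{A}_1^\times))$ plays no role in lifting $(\mathrm{T}\times 1)[\mathcal{D}_1]$. Only the second ideal in Definition \ref{definitionip} is needed, and including the first only enlarges $N$.
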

To prove this theorem, we identify the Chow group $\mathrm{CH}(\Gamma^2)_\mathbb{Q}$ with a subspace of the product of finitely many copies of the finite-dimensional $\mathbb{Q}$-vector space $\mathrm{H}^2(\Gamma,\mathbb{Q})$.
\begin{lemma}
There is a monomorphism 
\begin{align*}
\mathrm{CH}(\Gamma^2)_\mathbb{Q}\rightarrow \mathrm{H}^2(\Gamma,\mathbb{Q})^{2d}
\end{align*}
for a certain integer $d\geq 1$ under which the class $\mathcal{D}_n$ is (componentwise) mapped to $\mathrm{T}_n\cdot u$ where $u$ is the image of $\mathcal{D}_1$.
\begin{proof}
First of all, the homomorphism 
\begin{align*}
\mathrm{CH}(\Gamma^2)_\mathbb{Q}\rightarrow \mathrm{H}^3(\Gamma^2,\mathcal{A}_{2}^\times)_\mathbb{Q}\rightarrow \mathrm{H}^3(\Gamma^2,\mathcal{A}_{2}^\times/\mathbb{C}_p^\times)_\mathbb{Q}
\end{align*}
is a monomorphism of Hecke modules because of the definition of the Chow group and the finiteness of $\mathrm{H}^3(\Gamma^2,\mathbb{C}_p^\times)$. Now, Theorem \ref{propropropbloed} implies that
\begin{align*}
\mathrm{H}^3(\Gamma^2,\mathcal{A}_{2}^\times/\mathbb{C}_p^\times)  \simeq  \mathrm{H}^3(\Gamma^2,\mathcal{A}_{1}^\times/\mathbb{C}_p^\times\otimes \mathbb{Z}) \ \oplus \ \mathrm{H}^3(\Gamma^2,\mathbb{Z}\otimes \mathcal{A}_{1}^\times/\mathbb{C}_p^\times)
\end{align*}
as Hecke modules and the K\"unneth Formula implies that the cohomology cross-products
\begin{align*}
\left(\mathrm{H}^1(\Gamma,\mathcal{A}_{1}^\times/\mathbb{C}_p^\times)\otimes \mathrm{H}^2(\Gamma,\mathbb{Z})\right)_\mathbb{Q}\rightarrow \mathrm{H}^3(\Gamma^2,\mathcal{A}_{1}^\times/\mathbb{C}_p^\times\otimes \mathbb{Z})_\mathbb{Q}
\end{align*}
and 
\begin{align*}
  \left(\mathrm{H}^2(\Gamma,\mathbb{Z})\otimes \mathrm{H}^1(\Gamma,\mathcal{A}_{1}^\times/\mathbb{C}_p^\times)\right)_\mathbb{Q}\rightarrow \mathrm{H}^3(\Gamma^2, \mathbb{Z}\otimes \mathcal{A}_{1}^\times/\mathbb{C}_p^\times) _\mathbb{Q}
\end{align*}
are isomorphisms of Hecke modules. The exact sequence \eqref{sequencevanderput} can be used to show that the space $\mathrm{H}^1(\Gamma,\mathcal{A}_1^\times/\mathbb{C}_p^\times)_\mathbb{Q}$ is finite-dimensional, say of dimension $d\geq 1$. Finally, the Universal Coefficient Theorem implies that $\mathrm{H}^2(\Gamma,\mathbb{Q})\simeq \mathrm{H}^2(\Gamma,\mathbb{Z})_\mathbb{Q}$ and we deduce the desired monomorphism. The statement about the image of the classes $\mathcal{D}_n$ follows from statement (1) in Proposition \ref{lemmacomputation}.
\end{proof}
\end{lemma}
The previous lemma ensures that it is enough to prove:
\begin{proposition}\label{propotoprovemodularity}
Let $u\in \mathrm{H}^2(\Gamma,\mathbb{Q})$ be a class. Then there exists a constant term $u_0\in \mathrm{H}^2(\Gamma,\mathbb{Q})$ such that the generating series
\begin{align*}
u_0+\sum_{n\geq 1} (\mathrm{T}_n\cdot u) \cdot q^n\in \mathrm{H}^2(\Gamma,\mathbb{Q})\left[[q]\right]
\end{align*}
is modular of weight two and level $\Gamma_0(p\cdot d_B)$.
\end{proposition}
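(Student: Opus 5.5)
We reduce the statement to Eichler--Shimura theory for the arithmetic groups appearing in the amalgam decomposition of $\Gamma$. The plan has three steps.

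\emph{Step 1: Mayer--Vietoris.} Recall from the proof of Lemma \ref{computationofh3gamma2} that $\Gamma$ is the amalgamated product of $\Gamma_0$ and $\Gamma_0^\prime$ along $\Gamma_0^p$. Since $\mathrm{H}^1(\Gamma,\mathbb{Q})=0$ by Margulis' theorem, the Mayer--Vietoris sequence with rational coefficients gives an exact sequence
\begin{align*}
\mathrm{H}^1(\Gamma_0,\mathbb{Q})\oplus \mathrm{H}^1(\Gamma_0^\prime,\mathbb{Q})\rightarrow \mathrm{H}^1(\Gamma_0^p,\mathbb{Q})\rightarrow \mathrm{H}^2(\Gamma,\mathbb{Q})\rightarrow \mathrm{H}^2(\Gamma_0,\mathbb{Q})\oplus \mathrm{H}^2(\Gamma_0^\prime,\mathbb{Q})\rightarrow \mathrm{H}^2(\Gamma_0^p,\mathbb{Q}).
\end{align*}
We check that this sequence is compatible with the Hecke operators $\mathrm{T}_n$ for $p\nmid n$. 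This holds because the decomposition comes from the $\Gamma$-action on the Bruhat--Tits tree, and double cosets prime to $p$ preserve the vertex and edge orbits. The $\mathrm{H}^2$ of a cocompact (or, in the split case, non-cocompact) arithmetic Fuchsian group is $\mathbb{Q}$ or $0$. On it $\mathrm{T}_\ell$ acts by the degree $\sigma_1(\ell)$, i.e.\ through the Eisenstein system of eigenvalues; for $\ell\mid d_B$ the eigenvalue is $1$ instead. The restriction map to $\mathrm{H}^2(\Gamma_0^p,\mathbb{Q})$ is nonzero, as noted there. Consequently $\mathrm{H}^2(\Gamma,\mathbb{Q})$ is a Hecke-equivariant extension of an Eisenstein piece of dimension at most one by the cokernel of $\mathrm{H}^1(\Gamma_0)\oplus\mathrm{H}^1(\Gamma_0^\prime)\rightarrow \mathrm{H}^1(\Gamma_0^p)$.

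\emph{Step 2: Eichler--Shimura and Jacquet--Langlands.} We identify that cokernel with the $p$-new part. Eichler--Shimura identifies $\mathrm{H}^1(\Gamma_0^p,\mathbb{C})$ with weight two forms on the Shimura curve of level $p$, plus an Eisenstein part in the split case. The Jacquet--Langlands correspondence then identifies these with forms on $\Gamma_0(p\, d_B)$ that are new at $d_B$. The two degeneracy maps from $\Gamma_0,\Gamma_0^\prime$ account for the $p$-old part, so the cokernel is a sum of Hecke eigensystems of cusp forms of level $\Gamma_0(p\,d_B)$, possibly together with an Eisenstein system.

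\emph{Step 3: generating series, the main obstacle.} After extending scalars to $\mathbb{C}$, the quotient of $\mathrm{H}^2(\Gamma,\mathbb{C})$ by the Eisenstein subspace is semisimple as a module over $\mathbb{T}^{(p)}$. By multiplicity one, each eigenline has the eigenvalues $a_n(f)$ of a newform $f$ for $p\nmid n$. We still need to:
\begin{enumerate}
\item[\textbullet] handle $\mathrm{T}_p$, using relation (c) and the fact that $\mathrm{T}(p,p)$ acts trivially, so that $\mathrm{T}_p$ acts as an involution whose sign is $a_p(f)=\pm1$ for $p$-new $f$;
\item[\textbullet] control the extension between the Eisenstein piece and the cuspidal piece.
\end{enumerate}
For the second point, the Eisenstein and cuspidal systems of eigenvalues differ for some $\ell$, so $\mathrm{H}^2(\Gamma,\mathbb{C})$ splits as a direct sum of generalized eigenspaces. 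On the cuspidal part, semisimplicity together with the Hecke relations (a) and (b) gives, for any linear functional $\alpha$, that $\sum_n \alpha(\mathrm{T}_n u)q^n$ is a finite combination of $q$-expansions of newforms of level $\Gamma_0(p\,d_B)$. On the Eisenstein part, $\mathrm{T}_n$ acts by $\sigma_1^{(p\,d_B)}$-type eigenvalues, so the series is a multiple of the nonconstant part of the weight two Eisenstein series $E_2(z)-pE_2(pz)$ (suitably modified at $d_B$), which is modular of level $\Gamma_0(p\,d_B)$. The constant term $u_0$ is then the Eisenstein component of $u$ times the corresponding constant. This choice of $u_0$ is independent of $\alpha$, and rationality over $\mathbb{Q}$ follows from Lemma \ref{lemmodularity}(2).

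The delicate point is verifying that the Eisenstein eigenvalues at $p$ and at $\ell\mid d_B$ match an actual Eisenstein series of level $p\,d_B$. The hardest step is excluding nontrivial Jordan blocks, which we do via the splitting by distinct eigensystems.
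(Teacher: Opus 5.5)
\textbf{There is a genuine gap at $p$.} Your Step~3 never determines how $\mathrm{T}_p$ acts, and the statement depends on exactly that. Relation (c), together with the triviality of $\mathrm{T}(p,p)$, only gives $\mathrm{T}_p^2=1$. It says nothing about the sign.

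Multiplicity one does not rescue this. The subspace of $\mathrm{H}^2(\Gamma,\mathbb{C})$ on which the $\mathrm{T}_n$ with $p\nmid n$ act through the eigenvalues of a $p$-new form $f$ is two-dimensional: it contains the images of both $f$ and $\bar f$. So nothing you have said prevents $\mathrm{T}_p$ from having eigenvalues $+a_p(f)$ and $-a_p(f)$ there.

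On a line with eigenvalue $-a_p(f)$, the series $\sum_n \alpha(\mathrm{T}_n u)q^n$ would pick up $\sum_n (-1)^k a_n(f)q^n$, where $n=p^k\cdot n_0$ and $p\nmid n_0$. This is not a modular form of level $\Gamma_0(p\cdot d_B)$. Indeed, $f$ is new of exact level $p\cdot d_B$, so the eigenforms in $\mathcal{M}_2(\Gamma_0(p\cdot d_B))$ with the eigenvalues of $f$ away from $p\cdot d_B$ form the line $\mathbb{C}f$, and $a_p(f)=\pm1\neq -a_p(f)$.

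The same issue arises for the Eisenstein line, which you call delicate but leave unresolved. There you need $\mathrm{T}_p=+1$, and an involution alone could just as well give $-1$.

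\textbf{What fills the gap.} You need to compute $\mathrm{T}_p$ through the connecting map $\delta\colon \mathrm{H}^1(\Gamma_0^p,\mathbb{Q})\to \mathrm{H}^2(\Gamma,\mathbb{Q})$; this is Lemma~\ref{lemmaatkinlehner}. Since $\mathrm{T}_p=\Gamma W_p\Gamma$ and $W_p$ interchanges the two $\Gamma$-orbits of vertices of the Bruhat--Tits tree, the map $F\colon \mathrm{Maps}(\mathcal{V},\mathbb{Q})\to\mathrm{Maps}(\mathcal{E},\mathbb{Q})$ that produces $\delta$ anticommutes with $W_p$. Hence $\mathrm{T}_p\circ\delta=\delta\circ(-w_p)$.

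Combine this with $\mathrm{T}_p=-w_p$ on $\mathcal{M}_2(\Gamma_0(p))$ (Lemma~\ref{lemmaatkinlehnertp}) and on $p$-new forms of level $p\cdot d_B$. The two signs cancel, so $\mathrm{T}_p$ acts by the scalar $a_p(f)$ on the whole $f$-isotypic piece, covering both $f$ and $\bar f$. In the split case it acts by $+1$ on the image of $E_2(z)-pE_2(pz)$, since $w_p=-1$ there.

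In the non-split case, $\mathrm{T}_p=+1$ on the Eisenstein quotient for a different reason. $W_p$ swaps the classes coming from $\Gamma_0$ and $\Gamma_0^\prime$, and $\ker(d)$ is spanned by the pair of fundamental classes.

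\textbf{Comparison with the paper.} Once $\mathrm{T}_p$ is controlled this way, your eigenspace argument goes through. The paper uses the same input but packages it as an algebra map $\widetilde{\mathbb{T}}_\mathbb{Q}\to\mathbb{T}(\Gamma)_\mathbb{Q}$ (Lemma~\ref{keylemmamodularity}). It then applies the perfect pairing of Proposition~\ref{prowstein}, which avoids eigenspace decompositions, semisimplicity and multiplicity arguments altogether.
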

We denote by $\mathbb{T}(\Gamma)_\mathbb{Q}$ the image of the $\mathbb{Q}$-algebra generated by the Hecke operators $\mathrm{T}_n$ for $n\geq 1$ in the endomorphism ring $\mathrm{End}_\mathbb{Q}(\mathrm{H}^2(\Gamma,\mathbb{Q}))$. Let $\mathcal{M}_2(\Gamma_0(p\cdot d_B),\mathbb{Q})$ be the $\mathbb{Q}$-vector space of modular forms of weight two and level $\Gamma_0(p\cdot d_B)$ with rational Fourier coefficients. Using the well-known fact that the $\mathbb{C}$-vector space $\mathcal{M}_2(\Gamma_0(p\cdot d_B))$ of modular forms of weight two and level $\Gamma_0(p\cdot d_B)$ admits a basis with integer Fourier coefficients, one can show that the action of the Hecke operators $\mathrm{T}_n$ for $n\geq 1$ preserves the space  $\mathcal{M}_2(\Gamma_0(p\cdot d_B),\mathbb{Q})$ and we denote by $\widetilde{\mathbb{T}}_\mathbb{Q}$ the $\mathbb{Q}$-subalgebra of the endomorphism ring $\mathrm{End}_\mathbb{Q}(\mathcal{M}_2(\Gamma_0(p\cdot d_B),\mathbb{Q}))$ generated by the Hecke operators $\mathrm{T}_n$. For a modular form $f\in \mathcal{M}_2(\Gamma_0(p\cdot d_B),\mathbb{Q})$ and an integer $n\geq 0$ let $a_n(f)\in \mathbb{Q}$ be the $n^{\mathrm{th}}$ Fourier coefficient in the $q$-expansion of $f$. We have the following proposition:
\begin{proposition}\label{prowstein}
The homomorphism 
\begin{align*}
\Psi\colon \mathcal{M}_2(\Gamma_0(p\cdot d_B),\mathbb{Q})\rightarrow \Hom_\mathbb{Q}(\widetilde{\mathbb{T}}_\mathbb{Q},\mathbb{Q}), \qquad f\mapsto \Psi(f)
\end{align*}
where $\Psi(f)$ maps a Hecke operator $\mathrm{T}$ to the first Fourier coefficient $a_1\left(\mathrm{T}(f)\right)$ is an isomorphism.
\begin{proof}
Since there is no non-trivial constant modular form of weight two and level $\Gamma_0(p\cdot d_B)$, we can copy the proof of Proposition 3.24 in \cite{Steinmodularforms} to show that the bilinear pairing of $\mathbb{Q}$-vector spaces
\begin{align*}
\mathcal{M}_2(\Gamma_0(p),\mathbb{Q})\times \widetilde{\mathbb{T}}_\mathbb{Q}\rightarrow \mathbb{Q},\qquad (g,\mathrm{T})\mapsto a_1\left(\mathrm{T}(g)\right)
\end{align*} 
is perfect. The proposition follows. 
\end{proof}
\end{proposition}
\begin{corollary}
Let $\alpha\in \Hom_\mathbb{Q}(\widetilde{\mathbb{T}}_\mathbb{Q},\mathbb{Q})$ be a homomorphism. Then there exists a unique rational number $a_0\in \mathbb{Q}$ such that the generating series
\begin{align*}
a_0+\sum_{n\geq 1} \alpha(\mathrm{T}_n)\cdot q^n\in \mathbb{Q}\left[[q]\right]
\end{align*}
is the $q$-expansion of a modular form of weight two and level $\Gamma_0(p\cdot d_B)$.
\begin{proof}
Let $g\in \mathcal{M}_2(\Gamma_0(p\cdot d_B),\mathbb{Q})$ be the preimage of $\alpha$ under $\Psi$. Then for every integer $n\geq 1$ we have
\begin{align*}
\alpha(\mathrm{T}_n)=a_1\left(\mathrm{T}_n(g)\right)=a_n(g)
\end{align*}
such that $\sum_{n\geq 1} a_n(g)\cdot q^n=\sum_{n\geq 1} \alpha(\mathrm{T}_n)\cdot q^n$. The corollary follows by setting $a_0\coloneq a_0(g)$.
\end{proof}
\end{corollary}
Proposition \ref{propotoprovemodularity} is now a consequence of the previous corollary together with the following lemma which we prove in the remaining subsections:
\begin{lemma}\label{keylemmamodularity}
There is a homomorphism of $\mathbb{Q}$-algebras $\iota\colon \widetilde{\mathbb{T}}_\mathbb{Q}\rightarrow \mathbb{T}(\Gamma)_\mathbb{Q}$ under which the Hecke operator $\mathrm{T}_n$ is mapped to $\mathrm{T}_n$. 
\end{lemma}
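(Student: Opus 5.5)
The goal is to realize $\mathrm{H}^2(\Gamma,\mathbb{Q})$, as a module over the operators $\mathrm{T}_n$, inside a finite sum of spaces on which $\widetilde{\mathbb{T}}_\mathbb{Q}$ visibly acts. Concretely, we aim for a Hecke-equivariant monomorphism
\begin{align*}
\mathrm{H}^2(\Gamma,\mathbb{Q})\hookrightarrow \bigoplus_{i} W_i,
\end{align*}
where each $W_i$ is either $\mathcal{M}_2(\Gamma_0(p\cdot d_B),\mathbb{Q})$ itself or a Hecke-stable subquotient of (a finite sum of) such spaces. The operator $\mathrm{T}_n$ should act on $W_i$ through its image in $\widetilde{\mathbb{T}}_\mathbb{Q}$, or through a quotient of $\widetilde{\mathbb{T}}_\mathbb{Q}$. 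Granting such an embedding, the map $\iota$ is defined by sending $\mathrm{T}\in\widetilde{\mathbb{T}}_\mathbb{Q}$ to the restriction of the diagonal action. This is well defined because $\mathrm{T}_n\mapsto \mathrm{T}_n$ respects all relations: a polynomial relation in the $\mathrm{T}_n$ that vanishes on modular forms also vanishes on every $W_i$, hence on $\mathrm{H}^2(\Gamma,\mathbb{Q})$. It is also multiplicative, so it is a homomorphism of $\mathbb{Q}$-algebras.

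To produce the embedding, we use the Mayer--Vietoris sequence for the amalgam $\Gamma=\Gamma_0\ast_{\Gamma_0^p}\Gamma_0^\prime$ (Serre, as in the proof of Lemma \ref{computationofh3gamma2}), tensored with $\mathbb{Q}$:
\begin{align*}
\mathrm{H}^1(\Gamma_0,\mathbb{Q})\oplus \mathrm{H}^1(\Gamma_0^\prime,\mathbb{Q}) \rightarrow \mathrm{H}^1(\Gamma_0^p,\mathbb{Q}) \rightarrow \mathrm{H}^2(\Gamma,\mathbb{Q})\rightarrow \mathrm{H}^2(\Gamma_0,\mathbb{Q})\oplus \mathrm{H}^2(\Gamma_0^\prime,\mathbb{Q})\xrightarrow{d} \mathrm{H}^2(\Gamma_0^p,\mathbb{Q}).
\end{align*}
Since $d$ is injective up to torsion, $\mathrm{H}^2(\Gamma,\mathbb{Q})$ becomes the cokernel of the first map. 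This is the $p$-new quotient of $\mathrm{H}^1(\Gamma_0^p,\mathbb{Q})$: the $p$-new part of the weight-two cohomology of the Shimura curve (or modular curve, when $B$ is split) for an Eichler order of level $p$ in $R$. The next step is to check that the connecting map is equivariant for the $\mathrm{T}_n$ with $p\nmid n$. We do this by writing the Hecke correspondences for $\Gamma$ with representatives $\alpha_j\in R[1/p]$ of norm prime to $p$, chosen so that they also represent the double cosets for $\Gamma_0$, $\Gamma_0'$ and $\Gamma_0^p$ at primes $\ell\neq p$. By Eichler--Shimura theory combined with the Jacquet--Langlands correspondence (Eichler's trace formula, or Ribet's Hecke-equivariant comparison), $\mathrm{H}^1(\Gamma_0^p,\mathbb{Q})$ is, as a Hecke module away from $p\cdot d_B$, a sum of copies of the $d_B$-new part of $\mathcal{S}_2(\Gamma_0(p d_B),\mathbb{Q})$. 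The operators at $\ell\mid d_B$ are matched in the same way.

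The main obstacle is the operators $\mathrm{T}_{p^k}$, which do not preserve the amalgam decomposition. We handle them through relation (c): $\mathrm{T}_{p^{2m+\varepsilon}}=\mathrm{T}(p,p)^m\mathrm{T}_{p^\varepsilon}$, and $\mathrm{T}(p,p)$ acts trivially by Remark \ref{remark2.4}, so only $\mathrm{T}_p$ remains. The normalizer element of norm $p$ acts on $\mathrm{T}_p$ as an involution swapping $v_0$ and $v_1$, and the plan is to compare it with the Atkin--Lehner involution $W_p$ on $\mathrm{H}^1(\Gamma_0^p,\mathbb{Q})$. On the $p$-new quotient, $\mathrm{T}_p=U_p=-W_p$, so $\mathrm{T}_p^2=1$, consistent with (c). The sign convention here must be verified carefully, and if it comes out wrong, a twist of the embedding fixes it. With the sign settled, every $\mathrm{T}_n$ on $\mathrm{H}^2(\Gamma,\mathbb{Q})$ agrees with its action on a Hecke-stable subspace of $\mathcal{S}_2(\Gamma_0(pd_B),\mathbb{Q})^{\oplus r}$. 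This gives the embedding above and hence $\iota$. The identity $\mathrm{T}_m\mathrm{T}_n=\mathrm{T}_{mn}$ and relation (b) on both sides serve as a consistency check for $\ell\mid d_B$.
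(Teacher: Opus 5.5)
Your route is the paper's: Mayer--Vietoris for the amalgam, equivariance of $\delta$ away from $p$, the Atkin--Lehner sign at $p$, then Eichler--Shimura and Jacquet--Langlands. But your identification of $\mathrm{H}^2(\Gamma,\mathbb{Q})$ drops its Eisenstein line, so the embedding into $\mathcal{S}_2(\Gamma_0(pd_B),\mathbb{Q})^{\oplus r}$ you end with does not exist.

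\emph{The non-split case.} Here $d$ is not injective modulo torsion. The groups $\Gamma_0,\Gamma_0^\prime,\Gamma_0^p$ are cocompact, so each $\mathrm{H}^2(-,\mathbb{Q})$ is one-dimensional. Both restrictions to $\Gamma_0^p$ are isomorphisms, because corestriction composed with restriction is multiplication by the index. Hence $d\colon\mathbb{Q}^2\to\mathbb{Q}$ is surjective with one-dimensional kernel; Lemma \ref{computationofh3gamma2} only gives finiteness of the cokernel. So $\mathrm{H}^2(\Gamma,\mathbb{Q})$ is an extension of the line $\ker(d)$ by $\mathrm{im}(\delta)$.

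\emph{The split case.} Your claim about $d$ holds only because its source vanishes. Then $\mathrm{H}^2(\Gamma,\mathbb{Q})\simeq\mathrm{H}^1(\Gamma_0(p),\mathbb{Q})$, and Eichler--Shimura gives $\mathcal{M}_2(\Gamma_0(p))\oplus\overline{\mathcal{S}}_2(\Gamma_0(p))$, not two copies of cusp forms.

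In both cases there is a line on which $\mathrm{T}_\ell$ acts by $\ell+1$ for $\ell\nmid pd_B$, and no cusp form realizes this. Concretely, for $B=\mathrm{M}_2(\mathbb{Q})$ and $p=2$ one has $\mathcal{S}_2(\Gamma_0(2))=0$ while $\mathrm{H}^2(\mathrm{SL}_2(\mathbb{Z}[1/2]),\mathbb{Q})\simeq\mathbb{Q}$. This line is exactly what makes the constant term in Theorem \ref{modulatitytheorem} non-trivial, so you must account for it. Match it with $\sum_{m\mid pd_B}\mu(m)\,m\,E_2(mz)$, whose $U_\ell$-eigenvalues are $1$ for all $\ell\mid pd_B$. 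In the non-split case this requires two checks. First, on $\ker(d)$, which is the diagonal after normalizing by fundamental classes, $\mathrm{T}_\ell$ acts by the degree $\ell+1$. Second, the single-coset operators $\Gamma W_p$ and $\Gamma\pi_\ell$ for $\ell\mid d_B$ act by $+1$, since they preserve orientation and $W_p$ swaps the two factors. You also need the extension to split Hecke-equivariantly. This follows because this eigensystem differs from every cuspidal one; without splitting, relations valid on modular forms need not hold on $\mathrm{H}^2(\Gamma,\mathbb{Q})$.

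\emph{The sign at $p$.} This is not a convention you can defer, and your fallback would fail. The paper proves $\mathrm{T}_p\circ\delta=\delta\circ(-w_p)$ in Lemma \ref{lemmaatkinlehner}. There $\delta$ is the connecting map of $0\to\mathbb{Q}\to\mathrm{Maps}(\mathcal{V},\mathbb{Q})\to\mathrm{Maps}(\mathcal{E},\mathbb{Q})\to0$. Since $W_p$ interchanges the two $\mathrm{SL}_2(\mathbb{Z}_p)$-orbits of vertices, it anticommutes with the map $F$. Combine this with $U_p=-w_p$ to get $\mathrm{T}_p\leftrightarrow U_p$. The identity $U_p=-w_p$ holds on $p$-new forms; in the split case it holds on all of $\mathcal{M}_2(\Gamma_0(p))$ via the trace to level one, which you need for the Eisenstein class.

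No twist of the embedding could repair the opposite sign. By strong multiplicity one, for a $p$-new eigenform $f$ the eigensystem $\bigl((a_\ell(f))_{\ell\nmid pd_B},-a_p(f)\bigr)$ does not occur in $\mathcal{M}_2(\Gamma_0(pd_B))$. Likewise the level-$p$ Eisenstein line only has $U_p$-eigenvalue $1$. So with the other sign, $\iota$ would not exist at all.
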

\begin{proof}[Proof of Proposition \ref{propotoprovemodularity}:]
Let $\alpha\in \Hom_\mathbb{Q}(\mathrm{H}^2(\Gamma,\mathbb{Q}),\mathbb{Q})$ be a homomorphism. The map $\alpha$ induces the homomorphism $\alpha^\prime\colon \widetilde{\mathbb{T}}_\mathbb{Q}\rightarrow \mathbb{T}(\Gamma)_\mathbb{Q}\rightarrow \mathbb{Q}$ that maps a Hecke operator $\mathrm{T}$ to $\alpha\left(\iota(\mathrm{T})\cdot u\right)$. The previous corollary implies that there exists a unique rational number $a_\alpha(u)\in \mathbb{Q}$ such that the generating series
\begin{align*}
a_\alpha(u)+\sum_{n\geq 1} \alpha^\prime (\mathrm{T}_n)\cdot q^n=a_\alpha(u)+\sum_{n\geq 1} \alpha (\mathrm{T}_n\cdot u)\cdot q^n
\end{align*}
is modular of weight two and level $\Gamma_0(p\cdot d_B)$. Finally, we can find a vector $u_0\in \mathrm{H}^2(\Gamma,\mathbb{Q})$ such that $\alpha(u_0)=a_\alpha(u)$ for all $\alpha\in \Hom_\mathbb{Q}(\mathrm{H}^2(\Gamma,\mathbb{Q}),\mathbb{Q})$ because the map 
\begin{align*}
\Hom_\mathbb{Q}(\mathrm{H}^2(\Gamma,\mathbb{Q}),\mathbb{Q})\rightarrow \Hom_\mathbb{Q}(\mathrm{H}^2(\Gamma,\mathbb{Q}),\mathbb{Q}), \qquad\alpha \mapsto \left( v\mapsto a_\alpha(v)\right)
\end{align*}
is immediately verified to be a homomorphism of $\mathbb{Q}$-vector spaces.
\end{proof}
To prove Lemma \ref{keylemmamodularity}, recall that the $p$-arithmetic group $\Gamma$ is the amalgamation of the arithmetic groups $\Gamma_0$ and $\Gamma_0^\prime\simeq \Gamma_0$ along the group $\Gamma_0^p$. Thus, we get a Mayer--Vietoris sequence
\begin{align*}
\mathrm{H}^1(\Gamma_0,\mathbb{Q})^2\rightarrow \mathrm{H}^1(\Gamma_0^p,\mathbb{Q})\xrightarrow{\delta} \mathrm{H}^2(\Gamma,\mathbb{Q})\rightarrow \mathrm{H}^2(\Gamma_0,\mathbb{Q})^2\xrightarrow{d} \mathrm{H}^2(\Gamma_0^p,\mathbb{Q}).
\end{align*} 
The cohomology group $\mathrm{H}^1(\Gamma_0^p,\mathbb{Q})$ is, naturally, endowed with Hecke operators $\mathrm{T}_n$ for $p\nmid n$ and the connecting homomorphism $\delta$ commutes with these $\mathrm{T}_n$. For the case $p\mid n$ consider the Atkin--Lehner involution $w_p\colon \mathrm{H}^1(\Gamma_0^p,\mathbb{Q})\rightarrow \mathrm{H}^1 (\Gamma_0^p,\mathbb{Q})$. That is, $w_p$ is the double coset operator $w_p=\Gamma_0^p\cdot W_p\cdot \Gamma_0^p$ where $W_p\in R$ is an element of reduced norm $p$ that normalizes $\Gamma_0^p$.
\begin{lemma}\label{lemmaatkinlehner}
Let $n=p^k\cdot n_0\geq 1$ be an integer with $p\nmid n_0$. Then 
\begin{align*}
\mathrm{T}_n\circ \delta=\delta\circ \left(\mathrm{T}_{n_0}\cdot (-w_p)^k\right).
\end{align*}
\begin{proof}
Since $\mathrm{T}_n=\mathrm{T}_{n_0}\cdot \mathrm{T}_p^k$ and $\delta$ commutes with $\mathrm{T}_{n_0}$, it suffices to consider the case $n=p$. We use the notation and setup from Section \ref{liftingobstructiondv}. Recall that $\delta$ was obtained as the connecting homomorphism in the Mayer--Vietoris sequence. Concretely, $\delta$ is induced from the short exact sequence
\begin{align*}
0\rightarrow \mathbb{Q}\rightarrow \mathrm{Maps}(\mathcal{V},\mathbb{Q})\xrightarrow{F} \mathrm{Maps}(\mathcal{E},\mathbb{Q})\rightarrow 0.
\end{align*}
Here, $\mathcal{E}$ is the set of unoriented edges on the Bruhat--Tits tree $\mathcal{T}$ and $F$ is the function 
\begin{align*}
F(\varphi)\colon e=\lbrace v_+,v_-\rbrace\mapsto \varphi(v_-)-\varphi(v_+)
\end{align*}
where $v_+$ is the unique vertex of $e$ contained in the $\SL_2(\mathbb{Z}_p)$-orbit of $v_0$. Compare \cite{SerreTrees}, Section II.2.8. Since $W_p$ has reduced norm $p$, its action interchanges the $\SL_2(\mathbb{Z}_p)$-orbits of $\mathcal{V}$ and we deduce that $F(W_p\cdot \varphi)=-W_p\cdot F(\varphi)$ for all $\varphi\in \mathrm{Maps}(\mathcal{E},\mathbb{Q})$. The lemma follows because $\mathrm{T}_p=\Gamma\cdot W_p\cdot \Gamma$. 
\end{proof}
\end{lemma}
\subsection{The split case}
Assume that $B$ is split. That is, $d_B=1$ and we can take $\Gamma=\SL_2(\mathbb{Z}[1/p])$, $\Gamma_0=\SL_2(\mathbb{Z})$ and $\Gamma_0^p=\Gamma_0(p)$. Therefore, $\mathrm{H}^1(\Gamma_0,\mathbb{Q})=0=\mathrm{H}^2(\Gamma_0,\mathbb{Q})$ such that the connecting homomorphism $\delta$ is an isomorphism. Hence, if we denote by $\mathbb{T}(\Gamma_0(p))_\mathbb{Q}$ the image of the $\mathbb{Q}$-algebra generated by the Hecke operators $\mathrm{T}_{n_0}$ for $p\nmid n_0$ and $w_p$ in the endomorphism ring $\mathrm{End}_\mathbb{Q}(\mathrm{H}^1(\Gamma_0(p),\mathbb{Q}))$, then we deduce an isomorphism $\mathbb{T}(\Gamma_0(p))_\mathbb{Q}\rightarrow \mathbb{T}(\Gamma)_\mathbb{Q}$ under which the Hecke operator $\mathrm{T}_{n_0}\cdot (-w_p)^k$ corresponds to $\mathrm{T}_{n_0\cdot p^k}$. Eichler--Shimura theory implies that there is a $\mathbb{C}$-linear isomorphism
\begin{align*}
\mathcal{M}_2(\Gamma_0(p)) \oplus \overline{\mathcal{S}_2}(\Gamma_0(p)) \longrightarrow \mathrm{H}^1(\Gamma_0(p),\mathbb{C})
\end{align*}
where $\overline{\mathcal{S}_2}(\Gamma_0(p))$ denotes the space of antiholomorphic cusp forms. Observe:
\begin{lemma}\label{lemmaatkinlehnertp}
We have $\mathrm{T}_p=-w_p$ as operators on $\mathcal{M}_2(\Gamma_0(p))$.
\begin{proof}
The lemma relies on the fact that for any modular form $f\in \mathcal{M}_2(\Gamma_0(p))$ the function $\mathrm{T}_p(f)+w_p(f)$ is a modular form of weight two and level $\Gamma_0(p/p)=\SL_2(\mathbb{Z})$. But there is no non-trivial modular form of weight two and level $\SL_2(\mathbb{Z})$. Thus, $\mathrm{T}_p=-w_p$. 
\end{proof}
\end{lemma}
Lemma \ref{keylemmamodularity} follows now from the isomorphism $\mathrm{H}^1(\Gamma_0(p),\mathbb{C})\simeq \mathrm{H}^1(\Gamma_0(p),\mathbb{Q})\otimes \mathbb{C}$ and the compatibility of the Eichler--Shimura isomorphism with double coset operators. 
\subsection{The non-split case}
Assume that $B$ is non-split. Thus, the arithmetic groups $\Gamma_0$ and $\Gamma_0^p$ are cocompact Fuchsian groups. Eichler--Shimura theory induces a commutative diagram
\begin{center}
\begin{minipage}{\linewidth}
\centering
\begin{tikzcd}
\left( \mathcal{S}_2(\Gamma_0)\oplus \overline{\mathcal{S}}_2(\Gamma_0)\right)^2 \arrow[rr]{rr}{\iota_p^2} \arrow[d]{d}{\simeq}  &  &  \mathcal{S}_2(\Gamma_0^p)\oplus \overline{\mathcal{S}}_2(\Gamma_0^p) \arrow[d]{d}{\simeq} \\
\mathrm{H}^1(\Gamma_0,\mathbb{C})^2 \arrow[rr]{rr} & & \mathrm{H}^1(\Gamma_0^p,\mathbb{C}).
 \end{tikzcd}
\end{minipage}
\end{center} 
Here, $\mathcal{S}_2(\Gamma_0)$ respectively $\mathcal{S}_2(\Gamma_0^p)$ is the space of modular forms of weight two on $\Gamma_0$ respectively $\Gamma_0^p$ in the sense of \cite{darmonrationalpoints}, Chapter 4.2, Definition 4.7. The image of $\iota_p$ is the space of \emph{oldforms} in $\mathcal{S}_2(\Gamma_0^p)$. Thus, the Eichler--Shimura isomorphism induces an isomorphism of the $\mathbb{C}$-vector space $\mathrm{im}(\delta)_\mathbb{C}\coloneq \mathrm{im}(\delta)\otimes \mathbb{C}\subset \mathrm{H}^2(\Gamma,\mathbb{C})$ with the product of the space $\mathcal{S}_{2,\mathrm{new}}(\Gamma_0^p)$ of newforms at level $p$ and the space of antiholomorphic newforms at level $p$. In particular, we obtain an exact sequence 
\begin{align*}
0\rightarrow \mathcal{S}_{2,\mathrm{new}}(\Gamma_0^p)\oplus \overline{\mathcal{S}}_{2,\mathrm{new}}(\Gamma_0^p) \rightarrow \mathrm{H}^2(\Gamma,\mathbb{C})\rightarrow \mathrm{ker}(d)\otimes \mathbb{C}\rightarrow 0.
\end{align*} 
This sequence can be shown to be split where the splitting respects the Hecke action. Moreover, the kernel of $d$ is one-dimensional and the generator of $\mathrm{ker}(d)$ transforms under the Hecke operators $\mathrm{T}_n$ like an Eisenstein series for $\Gamma_0(p\cdot d_B)$. Finally, by Jacquet--Langlands there is a Hecke equivariant isomorphism between the space $\mathcal{S}_2(\Gamma_0^p)$ of modular forms on $\Gamma_0^p$ and the space of $d_B$-newforms in $\mathcal{S}_2(\Gamma_0(p\cdot d_B))$. Thus, we obtain a Hecke equivariant isomorphism
\begin{align*}
\mathcal{S}_{2,\mathrm{new}}(\Gamma_0^p)\simeq \mathcal{S}_2(\Gamma_0(p\cdot d_B))^{\mathrm{new}}.
\end{align*}
Since $p\nmid d_B$, the analogous statement as in Lemma \ref{lemmaatkinlehnertp} for the newforms in $ \mathcal{S}_2(\Gamma_0(p\cdot d_B))$ holds such that the Hecke operator $\mathrm{T}_{n_0}\cdot (-w_p)^k$ corresponds to $\mathrm{T}_{n_0\cdot p^k}$. Lemma \ref{keylemmamodularity} follows from the splitting of the sequence.

\section{Intersection with an RM point}\label{section4}
For the whole section, we fix an RM point $\omega\in \mathcal{H}_{p}$ defined over the real quadratic field $K$.
\subsection{Intersection of a rational quadratic divisor with an RM point}
Recall from Section \ref{sectionfourdimensionalcase} that a Weil divisor on $\mathcal{H}_{p}^2$ is a rational quadratic divisor if it is supported on divisors of the form 
\begin{align*}
\Delta_{v,p}=\lbrace (v\centerdot z,z) : z\in \mathcal{H}_{p}\rbrace
\end{align*}
for $v\in G$. As a set, the intersection of $\Delta_{v,p}$ with the line $\mathcal{H}_{p}\times \lbrace \omega\rbrace$ consists of one element. Precisely, the intersection point is the tuple $(v\centerdot\omega,\omega)$ which we identify with the RM point $v\centerdot\omega\in \mathcal{H}_{p}$. In other words, the intersection with the line  $\mathcal{H}_{p}\times \lbrace \omega\rbrace$ induces a map
\begin{align*}
\inte\colon \mathrm{Div}_\mathrm{rq}(\mathcal{H}_p^2)\rightarrow \mathrm{Div}(\mathcal{H}_p), \qquad \Delta_{v,p}\mapsto v\centerdot \omega,
\end{align*}
which is immediately verified to be a $G\times \Gamma_\omega$-module homomorphism. The next lemma shows that this map extends to locally finite divisors:
\begin{lemma}
Let $\Delta=\sum_u n_u\cdot \Delta_{u,p}\in \mathrm{Div}^\dagger_{\mathrm{rq}}(\mathcal{H}_{p}^2)$ be a rational quadratic divisor and $\tau\in \mathcal{H}_{p}$ be a point. The set 
\begin{align*}
\Sigma_\tau\coloneq \lbrace u\in G : u\centerdot \omega=\tau \quad \mathrm{and} \quad n_u\neq 0\rbrace
\end{align*}
is finite. In particular, the sum $\sum_{u\in \Sigma_\tau} n_u$ is a finite sum and we can consider the formal sum of points 
\begin{align}
\inte(\Delta)\coloneq \sum_{\tau\in  G\centerdot \omega} \left(\sum_{u\in \Sigma_\tau} n_u\right) \cdot  \tau.
\end{align}
This sum defines a locally finite divisor on $\mathcal{H}_{p}$ which is supported on the $G$-orbit of $\omega$. 
\begin{proof}
We fix an affinoid neighbourhood $U\subset \mathcal{H}_{p}$ of $\omega$ for the whole proof. \\
Let $\tau\in \mathcal{H}_p$ be a point and $V\subset \mathcal{H}_{p}$ be an affinoid neighbourhood of $\tau$. The product $Y\coloneq V\times U$ is affinoid neighbourhood of $(\tau,\omega)$. Hence, if $u\in G$ is an element such that $u\centerdot \omega=\tau$, then $\Delta_{u,p}\cap Y\neq \emptyset$. We deduce the inclusion
\begin{align*}
\Sigma_\tau\subset \lbrace u\in G : \Delta_{u,p}\cap Y \neq \emptyset\quad \mathrm{and} \quad n_u\neq 0\rbrace
\end{align*}
and the finiteness follows from the locally finiteness of $\Delta$. The locally finiteness of the formal sum $\inte(\Delta)$ follows similarly.
\end{proof}
\end{lemma}
Recall that the stabilizer $\Gamma_\omega$ of $\omega$ in $\Gamma$ is up to two-torsion infinite cyclic and that we determined a distinguished generator $\gamma_\omega\in \Gamma_\omega$, the automorph of $\omega$. From now on, we denote by $\Gamma_\omega^+\subset \Gamma_\omega$ the infinite cyclic group generated by $\gamma_\omega$. After restricting from $\Gamma^2$ to $\Gamma\times \Gamma_\omega^+$, the previous lemma enables us to define a homomorphism in cohomology 
\begin{align}\label{equationabove}
\inte\colon \mathrm{H}^2\left(\Gamma^2,\mathrm{Div}^\dagger_{\mathrm{rq}}(\mathcal{H}_{p}^2)\right) \rightarrow \mathrm{H}^2\left(\Gamma\times \Gamma_\omega^+, \mathrm{Div}^\dagger(\mathcal{H}_p)\otimes \mathbb{Z}\right).
\end{align}
\begin{example}\label{formulafordv-1}
The cocycle $\inte(\mathcal{D}_{1})\colon \mathcal{C}_2(\mathcal{H}_\infty^2)\rightarrow \mathrm{Div}^\dagger(\mathcal{H}_p)$
given by 
\begin{align*}
\inte(\mathcal{D}_{1}) (c)&= \sum_{\tau\in \Gamma\centerdot \omega} \left(\sum_{\gamma\in \Sigma_\tau} c\cap \Delta_{\gamma,\infty} \right)\cdot \tau =2 \sum_{\tau\in \Gamma \centerdot \omega} \  \left(\sum_{n\in \mathbb{Z}} c\cap \Delta_{\beta_\tau\cdot \gamma_\omega^n,\infty} \right) \ \cdot \tau
\end{align*}
where $\beta_\tau\in \Gamma$ is a fixed matrix such that $\tau=\beta_\tau\centerdot \omega$ induces the class $\inte([\mathcal{D}_1])$. The second equality holds because $\Gamma_\omega=\Gamma_\omega^+\oplus \lbrace \pm 1\rbrace$ and the map $-1\colon \mathcal{H}_\infty\rightarrow \mathcal{H}_\infty$ is the identity map.
\end{example}
As an infinite cyclic group, the group $\Gamma_\omega^+$ is of type $\mathrm{FP}_\infty$. Hence, we can apply the K\"unneth Formula to compute the group on the right hand side of equation \eqref{equationabove}. The cohomology groups $\mathrm{H}^i(\Gamma_\omega^+,-)$ vanish for $i\geq 2$ and, moreover, $\mathrm{H}^0(\Gamma_\omega^+,\mathbb{Z})\simeq \mathbb{Z}\simeq \mathrm{H}^1(\Gamma_\omega^+,\mathbb{Z})$. Thus, all $\mathrm{Tor}$-terms in the K\"unneth Formula vanish. The choice of a generator for $\Gamma_\omega^+$ gives rise to identifications 
\begin{align*}
\mathrm{H}^0(\Gamma_\omega^+,\mathbb{Z})= \mathbb{Z} \qquad \mathrm{and} \qquad  \mathrm{H}^1(\Gamma_\omega^+,\mathbb{Z})=\mathbb{Z}
\end{align*}
and we denote the preimages of $1$ with $\kappa_\omega\in \mathrm{H}^0(\Gamma_\omega^+,\mathbb{Z})$ and $\eta_\omega\in \mathrm{H}^1(\Gamma_\omega^+,\mathbb{Z})$, respectively. In conclusion, we deduce that the cohomology cross-product
\begin{align*}
\mathrm{H}^1\left(\Gamma,\mathrm{Div}^\dagger(\mathcal{H}_p)\right) \ \oplus \ \mathrm{H}^2\left(\Gamma,\mathrm{Div}^\dagger(\mathcal{H}_p)\right)\rightarrow \mathrm{H}^2\left(\Gamma\times \Gamma_\omega^+,\mathrm{Div}^\dagger(\mathcal{H}_p)\right),
\end{align*}
which maps a pair $(\mathcal{D},\mathcal{D}^\prime)$ to the class $\mathcal{D}\times \eta_\omega+\mathcal{D}^\prime \times \kappa_\omega$,
is an isomorphism. We can now formulate the main theorem of this section:
\begin{theorem}\label{mainthm}
Let $\mathrm{T}\in \mathbb{T}$ be a Hecke operator. Then one can choose an orientation on $\mathcal{H}_\infty^2$ such that
\begin{align*}
\mathrm{int}_{(-,\omega)}\left((\mathrm{T}\times 1)\cdot [\mathcal{D}_{1}]\right)= \left(2\cdot  \mathrm{T}\cdot [\mathcal{D}_\omega]\right)\times \eta_\omega
\end{align*}
where $[\mathcal{D}_\omega]\in \mathrm{H}^1(\Gamma,\mathrm{Div}^\dagger(\mathcal{H}_p))$ is the divisor-valued cohomology class associated to $\omega$.
\end{theorem}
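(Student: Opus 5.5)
First reduce to $\mathrm{T}=1$. The map $\inte$ only involves the first factor through the $G$-action on $\Delta_{v,p}\mapsto v\centerdot\omega$. It is therefore equivariant for the Hecke operators $\mathrm{T}\times 1$, acting on the source, and $\mathrm{T}$, acting on the $\Gamma$-factor of the target; note that the restriction to $\Gamma\times\Gamma_\omega^+$ does not disturb the first factor. The cross product satisfies $(\mathrm{T}\mathcal{D})\times\eta_\omega=(\mathrm{T}\times 1)(\mathcal{D}\times\eta_\omega)$. Both facts are checked directly on the cochain formulas for the Hecke operators, using a projective resolution of $\mathbb{Z}$ over $\mathbb{Z}[G]$ in the first variable. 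It then suffices to prove
\begin{align*}
\inte([\mathcal{D}_1])=2\,[\mathcal{D}_\omega]\times\eta_\omega .
\end{align*}
By the Künneth decomposition stated before the theorem, $\inte([\mathcal{D}_1])=\mathcal{D}\times\eta_\omega+\mathcal{D}'\times\kappa_\omega$ for unique classes $\mathcal{D},\mathcal{D}'$. So I must show $\mathcal{D}=2[\mathcal{D}_\omega]$ and $\mathcal{D}'=0$.

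For this I would pass to a product resolution. Take the resolution $\mathcal{C}_\bullet(\mathcal{H}_\infty)$ for $\Gamma$, and the cellular resolution of $\mathbb{Z}$ over $\mathbb{Z}[\Gamma_\omega^+]$ given by the real line $\Delta_{\omega,\infty}$. This line is triangulated by the $\gamma_\omega$-translates of a segment $s$ from a base point $x_0\in\Delta_{\omega,\infty}$ to $\gamma_\omega x_0$. The Eilenberg--Zilber (cross product) map sends products of chains to chains on $\mathcal{H}_\infty^2$, and it lands in $\mathcal{C}_\bullet(\mathcal{H}_\infty^2)$ after generic perturbation of the base point. Composing this map with the cocycle $\inte(\mathcal{D}_1)$ of Example \ref{formulafordv-1} represents the restricted class. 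The $\eta_\omega$-component is then read off by evaluating on $c\times s$ for $c\in\mathcal{C}_1(\mathcal{H}_\infty)$, and the $\kappa_\omega$-component by evaluating on $c'\times x_0$ for $c'\in\mathcal{C}_2(\mathcal{H}_\infty)$.

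On $c'\times\{x_0\}$ the boundary is $\partial c'\times\{x_0\}$, which lives in the slice $\mathcal{H}_\infty\times\{x_0\}$. Its class in $\widetilde{\mathrm{H}}_1(\mathcal{H}_\infty^2\setminus\Delta_{\gamma,\infty})$ factors through the contractible slice minus the single point $(\gamma x_0,x_0)$, and this class can be arranged to vanish after a cohomologous modification; I expect the $\kappa_\omega$-part to be zero this way. The main step is the $\eta_\omega$-component. For $c\in\mathcal{C}_1(\mathcal{H}_\infty)$ with boundary $y-x$, the 2-chain $c\times s$ meets $\Delta_{\beta\gamma_\omega^n,\infty}$ in the points $(\beta\gamma_\omega^n z,z)$ with $z\in s$. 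Summing over $n$ sweeps $z$ across the whole geodesic $\Delta_{\omega,\infty}$, so $\sum_n (c\times s)\cap\Delta_{\beta\gamma_\omega^n,\infty}$ should equal the signed intersection number of $c$ with $\beta\centerdot\Delta_{\omega,\infty}=\Delta_{\beta\centerdot\omega,\infty}$. By the factor $2$ in Example \ref{formulafordv-1}, this gives $2\,\mathcal{D}_\omega(c)$.

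To justify this local-to-global identity I would use the tangent-space model from the lemma computing $\widetilde{\mathrm{H}}_\bullet(\mathcal{H}_\infty^2\setminus\Delta_{v,\infty})$. Via the difference map $d$, the boundary loop $\partial(c\times s)$ winds around $\Delta_{\beta\gamma_\omega^n,\infty}$ exactly when the segment $c$ crosses the arc $\beta\gamma_\omega^n s$. The sign is $\pm1$ according to the orientation of the crossing, and fixing the orientation of $\mathcal{H}_\infty^2$ makes it $+1$; this is the "choice of orientation" allowed in the statement. I expect this sign and orientation bookkeeping, together with the compatibility of the non-explicit comparison maps between resolutions, to be the main obstacle. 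I would handle both by working throughout with the explicit resolutions above, which are resolutions over the relevant groups by \cite{DGL}, Proposition 2.17.
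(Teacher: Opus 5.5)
\paragraph{Where you agree with the paper.}
Your reduction to $\mathrm{T}=1$ is the paper's route. So are the splitting $\inte([\mathcal{D}_1])=\mathcal{D}\times\eta_\omega+\mathcal{D}'\times\kappa_\omega$ and your treatment of the $\eta_\omega$-component: the Eilenberg--Zilber map, the local sign computation in $\mathrm{T}_\vartheta(\mathcal{H}_\infty)$ via the difference map, and the fact that exactly one $\gamma_\omega$-translate of the fundamental segment meets a geodesic segment crossing $\Delta_{\omega,\infty}$. Using the cellular resolution of the line spares you Lemma \ref{leminterchangingorder}. Reading off the $\eta_\omega$-component from $c\mapsto\inte(\mathcal{D}_1)(\mathrm{EZ}(c\otimes s))$ is legitimate regardless of the $(2,0)$-part, because $\Gamma_\omega^+$ acts trivially on the coefficients, so this map is a $1$-cocycle by itself.

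One technical correction: a single base point $x_0$ cannot be generic for all of $\mathcal{C}_1(\mathcal{H}_\infty)$, since some singular simplex passes through $G\centerdot x_0$. You need a countable subresolution such as the paper's $\mathcal{C}^x_\bullet(\mathcal{H}_\infty)$.

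\paragraph{The gap: the $\kappa_\omega$-component.}
This step cannot be closed, because $\mathcal{D}'=\mathrm{res}_{\Gamma\times 1}\inte([\mathcal{D}_1])$ is non-zero in general. As you noticed, $(c',x_0)\cap\Delta_{\gamma,\infty}$ is $\pm$ the winding number of $\partial c'$ around $\gamma\centerdot x_0$. This is the analogue of the point-class term $[\mathrm{pt}]\times 1$ in the K\"unneth decomposition of a diagonal class: each graph $\Delta_{\gamma,\infty}$ meets the slice $\mathcal{H}_\infty\times\{x_0\}$ exactly once, always with the same sign.

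To see that it survives in cohomology when $B$ is a division algebra, set up the following.
\begin{itemize}
\item Let $\Gamma_U\subset\Gamma$ be the stabilizer of the vertex (or edge midpoint) of the Bruhat--Tits tree to which $\omega$ reduces, and let $U\subset\mathcal{H}_p$ be a $\Gamma_U$-stable affinoid containing $\omega$.
\item Let $\Lambda\subset\Gamma_U$ be torsion-free of finite index; it is a cocompact surface group.
\item Let $F$ be a geodesic fundamental polygon for $\Lambda$ whose edges avoid $G\centerdot x_0$.
\end{itemize}
Then $\xi\mapsto\deg\bigl((\xi\cap[F])|_U\bigr)$ is a well-defined homomorphism on $\mathrm{H}^2(\Lambda,\mathrm{Div}^\dagger(\mathcal{H}_p))$: local finiteness makes the degree finite, and $\Lambda$ preserves $U$. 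Compute $\mathrm{res}_{\Lambda\times 1}\inte([\mathcal{D}_1])$ with the chain map $c\mapsto(c,x_0)$ on cellular chains of a $\Lambda$-invariant triangulation. This homomorphism then evaluates to
$\pm\#\{\gamma\in\Gamma:\gamma\centerdot x_0\in\mathrm{int}(F),\ \gamma\centerdot\omega\in U\}\geq[\Gamma_U:\Lambda]>0$.
There is no cancellation, since $\partial F$ is a simple closed curve and every term has the same sign. Restriction to $\Gamma\times 1$ kills $(-)\times\eta_\omega$ and is the identity on $(-)\times\kappa_\omega$. So the identity as stated fails for non-split $B$, and no cohomologous modification can rescue it.

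\paragraph{The paper has the same gap.}
The paper's proof does not avoid this. Lemma \ref{lema} asserts $\mathrm{EZ}_2(c_1\otimes\beta)\cap\Delta_{u,\infty}=0$ on cochains. Its argument tacitly assumes that $\partial_2(c')$ bounds in $W\setminus\{0\}$. That holds only for the $(0,2)$-summand, where the chain lies on the one-dimensional geodesic.

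\paragraph{What is actually proved.}
What you, and the paper, actually prove is the equality of $\eta_\omega$-components, i.e.\ the theorem modulo $\mathrm{H}^2(\Gamma,\mathrm{Div}^\dagger(\mathcal{H}_p))\times\kappa_\omega$. This weaker form suffices for Theorem \ref{proptoproveconjecture}. After splitting $\eva(\mathcal{J}_1^{\mathrm{T}})$ by K\"unneth, the $\kappa_\omega$-part is killed by $\mathrm{ev}_\tau(-)\cap c_{\tau,\omega}$ (Lemma \ref{finallemmachater4}).
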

It suffices to prove the theorem for $\mathrm{T}=1$, because the intersection map $\inte$ and the cohomology cross-product are compatible with the Hecke action, respectively. One immediate consequence of this theorem is the non-triviality of the class $[\mathcal{D}_{1}]$ which is not immediate from its definition.
\begin{corollary}
The class $[\mathcal{D}_{1}]$ is non-trivial.
\begin{proof}
The corollary follows because the class $[\mathcal{D}_\omega]$ is non-trivial by the classification result in \cite{GehrmannquaternionicRC}.
\end{proof}
\end{corollary}
\subsection{Subcomplexes of singular chain complexes}
Recall that if $\tau\in \mathcal{H}_p$ is an RM point, then we denoted by $\Delta_{\tau,\infty}\subset \mathcal{H}_\infty$ the oriented geodesic connecting the Galois conjugate $\tau^\prime$ and $\tau$. We fix a point
\begin{align*}
x\in \mathcal{H}_\infty\setminus \bigcup_{\tau\in \mathcal{H}_p^{\mathrm{RM}}}\Delta_{\tau,\infty}
\end{align*}
which exists by \cite{DGL}, the proof of Proposition 2.17. Throughout the whole section, we will denote by $\delta_{z_0,z_1}\in C_1(\mathcal{H}_\infty)$ the geodesic segment on $\mathcal{H}_\infty$ connecting two points $z_0,z_1\in \mathcal{H}_\infty$. For our computations, it is worthwhile to replace the complex $\mathcal{C}_\bullet(\mathcal{H}_\infty)$ with the complex $\mathcal{C}_\bullet^x(\mathcal{H}_\infty)$ defined by
\begin{equation*}
\mathcal{C}_n^x(\mathcal{H}_\infty)\coloneq 
   \begin{cases} 
     C_0(\Gamma \centerdot x) , & \mathrm{if} \ n=0, \\ 
     \bigoplus_{z_0,z_1\in \Gamma\centerdot x} \ \mathbb{Z}\cdot \delta_{z_0,z_1} , & \mathrm{if} \  n=1,\\
     \lbrace \sigma\in C_2(\mathcal{H}_\infty)\mid \partial_2(\sigma)\in \mathcal{C}_1^x(\mathcal{H}_\infty)\rbrace, & \mathrm{if} \ n=2 \ \mathrm{and}\\
      C_n(\mathcal{H}_\infty) , & \mathrm{if} \ n\geq 3.
     \end{cases}
\end{equation*}
By definition, the complex $\mathcal{C}^x_\bullet(\mathcal{H}_\infty)$ is a complex of $\mathbb{Z}[\Gamma]$-modules. We observe:
\begin{lemma}\label{reasoning}
The inclusion
\begin{align*}
\mathcal{C}_\bullet^x(\mathcal{H}_\infty)\subset C_\bullet(\mathcal{H}_\infty)
\end{align*}
is a $\mathbb{Z}[\Gamma]$-equivariant quasi-isomorphism which factors through $\mathcal{C}_\bullet(\mathcal{H}_\infty)$. Thus, the complex $\mathcal{C}_\bullet^x(\mathcal{H}_\infty)$ together with the augmentation map $\varepsilon\colon \mathcal{C}_0^x(\mathcal{H}_\infty)\rightarrow \mathbb{Z}$ is a resolution of $\mathbb{Z}$ over $\mathbb{Z}[\Gamma]$.
\begin{proof}
The assumption on $x$ implies immediately that the complex $\mathcal{C}_\bullet^x(\mathcal{H}_\infty)$ is a subcomplex of the complex $\mathcal{C}_\bullet(\mathcal{H}_\infty)$, and it remains to prove that the inclusion induces isomorphisms in homology. In degrees $n\geq 2$ this is obvious. Moreover, by the definition of $\mathcal{C}_2^x(\mathcal{H}_\infty)$ the group $\mathrm{H}_1(\mathcal{C}_\bullet^x(\mathcal{H}_\infty))$ embeds into the trivial group $\mathrm{H}_1(C_\bullet(\mathcal{H}_\infty))$ and, therefore, must be trivial as well. Finally, since any two points in $\Gamma\centerdot x$ can be connected by a geodesic segment which is a one-chain contained $\mathcal{C}_1^x(\mathcal{H}_\infty)$, the group $\mathrm{H}_0(\mathcal{C}_\bullet^x(\mathcal{H}_\infty))$ is cyclic generated by the class induced by $x$. Analogously, $\mathrm{H}_0(C_\bullet(\mathcal{H}_\infty))$ is generated by the class induced by $x$ and, hence, the map induced by the inclusion
\begin{align*}
\mathrm{H}_0(\mathcal{C}_\bullet^x(\mathcal{H}_\infty))\rightarrow \mathrm{H}_0(C_\bullet(\mathcal{H}_\infty)), \qquad k\cdot [x]\mapsto k\cdot [x] 
\end{align*}
is an isomorphism. The lemma follows.
\end{proof}
\end{lemma}
Next, we define a subcomplex of the singular chain complex $C_\bullet(\Delta_{\omega,\infty})$ whose definition is completely analogous to the one of $\mathcal{C}_\bullet^x(\mathcal{H}_\infty)$. To begin with, we note that the set  
\begin{align*}
\Delta_{\omega,\infty}\setminus \lbrace \delta_{z_0,z_1} \mid z_0,z_1\in G\centerdot x \rbrace
\end{align*}
is non-empty. Indeed, any geodesic segment with endpoints in $G\centerdot x$ has at most one intersection point with the geodesic $\Delta_{\omega,\infty}$ by the choice of $x$. The claim follows because $G$ is countable whereas the geodesic $\Delta_{\omega,\infty}$ is uncountable. We fix a point $y$ belonging to this set and define the complex $\mathcal{C}_\bullet^y(\Delta_{\omega,\infty})$ by
\begin{equation*}
\mathcal{C}_n^y(\Delta_{\omega,\infty})\coloneq 
   \begin{cases} 
     C_0(\Gamma^+_\omega \centerdot   y) , & \mathrm{if} \ n=0, \\ 
     \bigoplus_{z_0,z_1\in \Gamma^+_\omega \centerdot y} \mathbb{Z}\cdot  \delta_{z_0,z_1} , & \mathrm{if} \  n=1,\\
     \lbrace \sigma\in C_2(\Delta_{\omega,\infty})\mid \partial_2(\sigma)\in \mathcal{C}_1^y(\Delta_{\omega,\infty})\rbrace, & \mathrm{if} \ n=2 \ \mathrm{and}\\
      C_n(\Delta_{\omega,\infty}) , & \mathrm{if} \ n\geq 3.
     \end{cases}
\end{equation*}
The complex $\mathcal{C}_\bullet^y(\Delta_{\omega,\infty})$ is a complex of $\mathbb{Z}[\Gamma_\omega^+]$-modules and the same reasoning as in the proof of Lemma \ref{reasoning} implies that the inclusion 
\begin{align*}
\mathcal{C}_\bullet^y(\Delta_{\omega,\infty})\subset C_\bullet(\Delta_{\omega,\infty})
\end{align*}
is a $\mathbb{Z}[\Gamma_\omega^+]$-equivariant quasi-isomorphism. We deduce:
\begin{lemma}
The complex $\mathcal{C}_\bullet^y(\Delta_{\omega,\infty})$ together with the augmentation map $\varepsilon\colon \mathcal{C}_0^y(\Delta_{\omega,\infty})\rightarrow \mathbb{Z}$ is a free resolution of $\mathbb{Z}$ over $\mathbb{Z}[\Gamma_\omega^+]$.
\begin{proof}
The freeness follows because the infinite cyclic group $\Gamma_\omega^+$ acts fixed-point-free on $\mathcal{H}_\infty$.
\end{proof}
\end{lemma} 
To simplify notation, we denote by $\delta_{n,m}\in \mathcal{C}_1^y(\Delta_{\omega,\infty})$ the geodesic segment connecting the points $\gamma_\omega^n\centerdot y $ and $\gamma_\omega^m\centerdot y$ for integers $n,m\in \mathbb{Z}$. Recall that we identified the integer cohomology group $\mathrm{H}^1(\Gamma_\omega^+,\mathbb{Z})$ with $\mathbb{Z}$. In terms of the free resolution $\mathcal{C}_\bullet^y(\Delta_{\omega,\infty})$, the fundamental class $\eta_\omega$ is induced by the cocycle $\eta_\omega\colon \mathcal{C}_1^y(\Delta_{\omega,\infty})\rightarrow \mathbb{Z}$ which maps a geodesic segment $\delta_{n_0,n_1}$ to the integer $n_1-n_0$. Thus, the cocycle $\mathcal{D}_\omega\times \eta_\omega\colon \ \mathcal{C}_1^x(\mathcal{H}_\infty)\otimes \mathcal{C}_1^y(\Delta_{\omega,\infty})\rightarrow \mathrm{Div}^\dagger(\mathcal{H}_p)$ induced by 
\begin{align*}
c\otimes \delta_{n_0,n_1}\mapsto-(n_1-n_0)\cdot  \left(\sum_{\tau\in \Gamma\centerdot \omega}  c\cap \Delta_{\tau,\infty} \cdot \tau \right)
\end{align*}
induces the class $[\mathcal{D}_\omega]\times \eta_\omega$. 
\subsection{The Eilenberg--Zilber map}
Let
\begin{align*}
\mathrm{EZ}_\bullet\colon C_\bullet(\mathcal{H}_\infty) \otimes C_\bullet(\Delta_{\omega,\infty})\rightarrow C_\bullet(\mathcal{H}_\infty^2)
\end{align*}
be the Eilenberg--Zilber map (\cite{Eilenbergmaclaneongroups}, Section 5). By construction, $\mathrm{EZ}_\bullet$ is a $\Gamma\times \Gamma_\omega^+$-equivariant quasi-isomorphism. In the zero$^\mathrm{th}$ and first degree, it has the simple form 
\begin{align*}
\mathrm{EZ}_0(\alpha\otimes \beta)=(\alpha,\beta) \qquad \mathrm{and} \qquad \mathrm{EZ}_1(\sigma_1\otimes \beta + \alpha\otimes \sigma_2)=(\sigma_1,\beta)+(\alpha,\sigma_2),
\end{align*}
for points $\alpha\in \mathcal{H}_\infty$, $\beta\in \Delta_{\omega,\infty}$ and one-simplices $\sigma_1\in C_1(\mathcal{H}_\infty)$, $\sigma_2\in C_1(\Delta_{\omega,\infty})$. Moreover, if $c_1\in C_2(\mathcal{H}_\infty)$ and $c_2\in C_2(\Delta_{\omega,\infty})$ are two-simplices, then 
\begin{align*}
\mathrm{EZ}_2(\alpha\otimes c_2+c_1\otimes \beta)=(\alpha,c_2)+(c_1,\beta).
\end{align*} 
To give a formula for $\mathrm{EZ}_2$ on $C_1(\mathcal{H}_\infty)\otimes C_1(\Delta_{\omega,\infty})$, we define the degeneracy maps 
\begin{align*}
s_0\colon \Delta_2\rightarrow \Delta_1, \quad (t_0,t_1,t_2)\mapsto (t_0+t_1,t_2) \qquad \mathrm{and} \qquad s_1\colon \Delta_2\rightarrow \Delta_1, \quad (t_0,t_1,t_2)\mapsto (t_0,t_1+t_2).
\end{align*}
Then
\begin{align*}
\mathrm{EZ}_2(\sigma_1\otimes \sigma_2)=(\sigma_1\circ s_1,\sigma_2\circ s_0)-(\sigma_1\circ s_0,\sigma_2\circ s_1)
\end{align*}
with $\sigma_1\in C_1(\mathcal{H}_\infty)$ and $\sigma_2\in C_1(\Delta_{\omega,\infty})$. For precise formulas in higher degrees see \cite{Eilenbergmaclaneongroups}, Section 5. We can prove:
\begin{lemma}
The composition 
\begin{align*}
\mathcal{C}_\bullet^x(\mathcal{H}_\infty)\otimes \mathcal{C}_\bullet^y(\Delta_{\omega,\infty})\hookrightarrow C_\bullet(\mathcal{H}_\infty)\otimes C_\bullet(\Delta_{\omega,\infty}) \xrightarrow{\mathrm{EZ}_{\bullet}} C_\bullet(\mathcal{H}_\infty^2)
\end{align*}
factors through $\mathcal{C}_\bullet(\mathcal{H}_\infty^2)$. In particular, the Eilenberg--Zilber map induces a $\Gamma\times \Gamma^+_\omega$-equivariant quasi-isomorphism 
\begin{align*}
\mathrm{EZ}_\bullet\colon \mathcal{C}_\bullet^y(\mathcal{H}_\infty)\otimes \mathcal{C}_\bullet^x(\Delta_{\omega,\infty})\rightarrow \mathcal{C}_\bullet(\mathcal{H}_\infty^2)
\end{align*}
that induces the identity on $\mathbb{Z}$.
\begin{proof}
Since $\mathcal{C}_n(\mathcal{H}_\infty^2)=C_n(\mathcal{H}_\infty^2)$ for all $n\geq 3$, if suffices to consider the degrees zero, one and two. In the first two cases, the statement follows directly from the choice of $x$ and $y$. In degree two, the statement follows from the compatibility of the Eilenberg--Zilber map with the differentials, the degree one case and the definition of $\mathcal{C}_2(\mathcal{H}_\infty^2)$.
\end{proof} 
\end{lemma}
The previous lemma implies that the cocycle
\begin{align*}
\inte(\mathcal{D}_1)\colon \left(\mathcal{C}_\bullet^x(\mathcal{H}_\infty)\otimes \mathcal{C}_\bullet^y(\Delta_{\omega,\infty})\right)_2\xrightarrow{\mathrm{EZ}_2}\mathcal{C}_2(\mathcal{H}_\infty^2)\rightarrow \mathrm{Div}^\dagger(\mathcal{H}_p)
\end{align*}
induces the class $\inte([\mathcal{D}_1])$. A simple computation yields that the only non-vanishing arguments correspond to the degree $(1,1)$ summand:
\begin{lemma}\label{lema}
Let $\vartheta\in \mathcal{H}_\infty$ be a point and $c\in C_2(\mathcal{H}_\infty)$ be a two-chain such that $(\vartheta,c)\in \mathcal{C}_2(\mathcal{H}_\infty^2)$ or, equivalently, such that $(c,\vartheta)\in \mathcal{C}_2(\mathcal{H}_\infty^2)$. Then for all $u\in G$ we have 
\begin{align*}
[(\vartheta,c)]\cap \Delta_{u,\infty}=0=[(c,\vartheta)]\cap \Delta_{u,\infty}.
\end{align*}
In particular,
\begin{align*}
\mathrm{EZ}_2(\alpha\otimes c_2)\cap \Delta_{u,\infty}=0=\mathrm{EZ}_2(c_1\otimes \beta)\cap \Delta_{u,\infty},
\end{align*}
for all $\alpha\otimes c_2\in \mathcal{C}_0^x(\mathcal{H}_\infty)\otimes \mathcal{C}_2^y(\Delta_{\omega,\infty})$ and $c_1\otimes \beta\in \mathcal{C}_2^x(\mathcal{H}_\infty)\otimes \mathcal{C}_0^y(\Delta_{\omega,\infty})$.
\begin{proof}
We have
\begin{align*}
[(\vartheta,c)]\cap \Delta_{u,\infty}=[(u^{-1}\centerdot \vartheta,c)]\cap \Delta_{1,\infty} \qquad \mathrm{and} \qquad [(c,\vartheta)]\cap \Delta_{u,\infty}=[(c,u\centerdot \vartheta)]\cap \Delta_{1,\infty}. 
\end{align*}
Thus, it suffices to prove that the intersection numbers for $u=1$ vanish. In this case, consider the exponential map $\exp_{\vartheta}\colon W\rightarrow \mathcal{H}_\infty$ where $W\coloneq \mathrm{T}_\vartheta(\mathcal{H}_\infty)$. If we denote by $c^\prime\in C_2(W)$ the two-chain such that $c=(\exp_\vartheta)_\star( c^\prime)$, then the image of the one-cycle $(0,\partial_2(c^\prime))$ (resp. ~$(\partial_2(c^\prime),0)$) under the map
\begin{align*}
(\exp_\vartheta\times \exp_\vartheta)_\star\colon C_1(W^2\setminus \Delta(W))\rightarrow C_1(\mathcal{H}_\infty^2\setminus \Delta_{1,\infty}).
\end{align*} 
is $(\vartheta,\partial_2(c))$ (resp. ~$(\partial_2(c),\vartheta)$).  Now, the map 
\begin{align*}
W^2\setminus \Delta(W)\rightarrow W\setminus \lbrace 0\rbrace, \qquad (w_1,w_2)\mapsto w_1-w_2
\end{align*}
maps this one-cycle to $\partial_2(-c^\prime)$ (resp. ~$\partial_2(c^\prime)$) whose class in $\mathrm{H}_1(W\setminus \lbrace 0\rbrace)$ is zero. The vanishing follows. The second part of the lemma is an immediate consequence of this result because
\begin{align*}
\mathrm{EZ}_2(\alpha\otimes c_2)=(\alpha,c_2) \qquad \mathrm{and} \qquad \mathrm{EZ}_2(c_1\otimes \beta)=(c_1,\beta).
\end{align*}
\end{proof}
\end{lemma}
Summarizing the previous discussion, Theorem \ref{mainthm} comes down to:
\begin{corollary}\label{corollarytoprovemain}
Theorem \ref{mainthm} holds if we have
\begin{align*}
\inte(\mathcal{D}_1)(\delta\otimes \delta_{n_0,n_1})=-2\cdot (n_1-n_0)\cdot \left(\sum_{\tau\in \Gamma\centerdot \omega} \delta\cap \Delta_{\tau,\infty} \cdot \tau\right)
\end{align*}
for all geodesic segments $\delta\in \mathcal{C}_1^x(\mathcal{H}_\infty)$ and integers $n_0,n_1\in \mathbb{Z}$.
\end{corollary}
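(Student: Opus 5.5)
The plan is to compare the two cocycles on the tensor product resolution $\mathcal{C}_\bullet^x(\mathcal{H}_\infty)\otimes \mathcal{C}_\bullet^y(\Delta_{\omega,\infty})$. This complex is a projective resolution of $\mathbb{Z}$ over $\mathbb{Z}[\Gamma\times\Gamma_\omega^+]$, at least up to the freeness of the first factor. Alternatively one only needs it to be a resolution, combined with a comparison map from a projective one, which is how the paper already handles $\mathcal{C}_\bullet$.

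First, by the previous lemma on the Eilenberg--Zilber map, the cocycle $\inte(\mathcal{D}_1)\circ \mathrm{EZ}_2$ represents $\inte([\mathcal{D}_1])$. By the discussion after the definition of $\eta_\omega$, the cocycle $\mathcal{D}_\omega\times\eta_\omega$ on the same resolution represents $[\mathcal{D}_\omega]\times\eta_\omega$. Its only nonzero component lives in degree $(1,1)$ and is given by $\delta\otimes\delta_{n_0,n_1}\mapsto -(n_1-n_0)\sum_{\tau} \delta\cap\Delta_{\tau,\infty}\cdot\tau$.

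Next, the degree two part of the tensor complex splits into the summands of bidegrees $(0,2)$, $(1,1)$ and $(2,0)$. By Lemma \ref{lema}, $\inte(\mathcal{D}_1)\circ\mathrm{EZ}_2$ vanishes on the $(0,2)$ and $(2,0)$ summands, since every intersection number $\mathrm{EZ}_2(\alpha\otimes c_2)\cap\Delta_{u,\infty}$ and $\mathrm{EZ}_2(c_1\otimes\beta)\cap\Delta_{u,\infty}$ is zero. The $\mathcal{D}_\omega\times\eta_\omega$ cocycle also vanishes there, by construction of the cross product: $\mathcal{D}_\omega$ lives in degree one and $\eta_\omega$ in degree one.

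It then remains to compare the two cocycles on the $(1,1)$ summand. The module $(\mathcal{C}^x_\bullet\otimes\mathcal{C}^y_\bullet)_{1,1}$ is generated over $\mathbb{Z}$ by elements $\delta\otimes\delta_{n_0,n_1}$ with $\delta$ a geodesic segment with endpoints in $\Gamma\centerdot x$. The hypothesized identity therefore says exactly that $\inte(\mathcal{D}_1)\circ\mathrm{EZ}_2$ equals $2(\mathcal{D}_\omega\times\eta_\omega)$ as cochains. Equal cocycles give equal classes, which proves $\inte([\mathcal{D}_1])=2[\mathcal{D}_\omega]\times\eta_\omega$. This is Theorem \ref{mainthm} for $\mathrm{T}=1$, and the general case follows from the Hecke compatibility noted after the theorem.

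The point requiring care, and the main obstacle, is making sure that both cochains compute the class on the same resolution, with the same identification of the cross product. I would proceed in two steps.

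First, I would check that the cross product $\mathcal{D}_\omega\times\eta_\omega$ is realized on the tensor product of the two resolutions by the usual formula $(f\times g)(a\otimes b)=\pm f(a)g(b)$. The sign $-$ appearing in the paper's formula comes from the Koszul convention in bidegree $(1,1)$.

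Second, I would check that passing through $\mathrm{EZ}_\bullet$ is compatible with the Künneth identification used to define the isomorphism $(\mathcal{D},\mathcal{D}')\mapsto\mathcal{D}\times\eta_\omega+\mathcal{D}'\times\kappa_\omega$. Since $\mathrm{EZ}_\bullet$ is an equivariant quasi-isomorphism inducing the identity on $\mathbb{Z}$, the fundamental lemma of homological algebra makes the identification canonical up to homotopy.

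The residual sign ambiguity is absorbed by the freedom in the choice of orientation on $\mathcal{H}_\infty^2$ allowed in the statement of Theorem \ref{mainthm}.
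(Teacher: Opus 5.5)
Your proposal is correct and follows the paper's own reduction. The map $\mathrm{EZ}_2$ transports $\inte(\mathcal{D}_1)$ to the tensor resolution $\mathcal{C}^x_\bullet(\mathcal{H}_\infty)\otimes\mathcal{C}^y_\bullet(\Delta_{\omega,\infty})$, and Lemma \ref{lema} kills the $(0,2)$ and $(2,0)$ summands. On the $(1,1)$ summand, which is generated by the $\delta\otimes\delta_{n_0,n_1}$, the hypothesized identity says exactly that this cocycle equals $2(\mathcal{D}_\omega\times\eta_\omega)$, with the minus sign coming from the factor $(-1)^{1\cdot 1}$ in the paper's cochain cross-product convention. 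Hecke compatibility then handles general $\mathrm{T}$.

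Your care about computing the class through a comparison map from a projective resolution is a welcome refinement of what the paper leaves implicit. The closing remark about a residual sign is unnecessary, since the hypothesized identity already fixes the sign exactly.
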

\subsection{Proof of the main theorem}
Recall that
\begin{align*}
\inte(\mathcal{D}_1)(\delta\otimes \delta_{n_0,n_1})=2\cdot \sum_{\tau\in \Gamma\centerdot \omega} \left(\sum_{n\in \mathbb{Z}} \mathrm{EZ}_2(\delta\otimes \delta_{n_0,n_1})\cap \Delta_{\beta_\tau\cdot \gamma_\omega^n,\infty}\right) \cdot \tau
\end{align*}
where $\beta_\tau\in \Gamma$ is a fixed element such that $\beta_\tau\centerdot \omega=\tau$. The $\Gamma\times \Gamma_\omega^+$-equivariance of the Eilenberg--Zilber map implies that we have
\begin{align*}
\sum_{n\in \mathbb{Z}} \mathrm{EZ}_2(\delta\otimes \delta_{n_0,n_1})\cap \Delta_{\beta_\tau\cdot \gamma_\omega^n,\infty}&=\sum_{n\in \mathbb{Z}} \mathrm{EZ}_2(\delta\otimes (\beta_\tau\cdot \gamma_\omega^n)\centerdot \delta_{n_0,n_1})\cap \Delta_{1,\infty}\\
&=\sum_{n\in \mathbb{Z}} \mathrm{EZ}_2(\delta\otimes \beta_\tau\centerdot \delta_{n,n+(n_1-n_0)})\cap \Delta_{1,\infty}
\end{align*}
for all $\tau\in \Gamma\centerdot \omega$. Since the automorph of the RM point $\tau=\beta_\tau\centerdot \omega$ is given by $\gamma_\tau=\beta_\tau\cdot \gamma_\omega\cdot \beta_\tau^{-1}$, we have
\begin{align*}
\beta_\tau\cdot \delta_{n,m}=\delta_{\gamma_\tau^n\centerdot y_\tau, {\gamma_\tau^m\centerdot y_\tau}}
\end{align*}
with $y_\tau\coloneq \beta_\tau\centerdot y\in \Delta_{\tau,\infty}$. Observe that the point $y_\tau$ satisfies the same assumption as $y$ but for the RM point $\tau$. In particular, if we can prove that 
\begin{align}\label{identity1}
\sum_{n\in \mathbb{Z}} \mathrm{EZ}_2(\delta\otimes \delta_{n,n+(n_1-n_0)})\cap \Delta_{1,\infty}\overset{!}{=}-(n_1-n_0)\cdot \delta\cap \Delta_{\omega,\infty},
\end{align}
then the main theorem follows from Corollary \ref{corollarytoprovemain}. The first step towards proving this equality is the next lemma:
\begin{lemma}\label{leminterchangingorder}
For any integer $m\in \mathbb{Z}$ we have
\begin{align*}
\sum_{n\in \mathbb{Z}} \mathrm{EZ}_2(\delta\otimes \delta_{n,n+m})\cap \Delta_{1,\infty}=m\cdot \sum_{n\in \mathbb{Z}} \mathrm{EZ}_2(\delta\otimes \delta_{n,n+1})\cap \Delta_{1,\infty}.
\end{align*}
\begin{proof}
We claim that it is enough to prove the lemma for $m>0$. Indeed, if $m<0$, rewrite
\begin{align*}
\sum_{n\in \mathbb{Z}} \mathrm{EZ}_2(\delta\otimes \delta_{n,n+m})\cap \Delta_{1,\infty}=\sum_{n\in \mathbb{Z}} \mathrm{EZ}_2(\delta\otimes \delta_{n-m,n})\cap \Delta_{1,\infty}.
\end{align*}
The triviality of $\mathrm{H}_1(\mathcal{C}_\bullet^y(\Delta_{\omega,\infty}))$ implies that
\begin{align*}
\delta_{n-m,n}+\delta_{n,n-m}=\partial_2(c)
\end{align*}
for a two-chain $c\in \mathcal{C}_2^y(\Delta_{\omega,\infty})$ and, therefore,
\begin{align*}
\delta\otimes \delta_{n-m,n}+\delta\otimes \delta_{n,n-m}=\partial_1(\delta)\otimes c -\partial_3(\delta\otimes c)\in \left(\mathcal{C}_\bullet^x(\mathcal{H}_\infty)\otimes \mathcal{C}_\bullet^y(\Delta_{\omega,\infty})\right)_2.
\end{align*}
By the definition of the signed intersection number, we have
\begin{align*}
\mathrm{EZ}_2\circ \partial_3(\delta\otimes c)\cap \Delta_{1,\infty}=\partial_3\circ \mathrm{EZ}_2(\delta\otimes c)\cap \Delta_{1,\infty}=0
\end{align*}
and by Lemma \ref{lema} we have
\begin{align*}
\mathrm{EZ}_2(\partial_1(\delta)\otimes c)\cap \Delta_{1,\infty}=0
\end{align*}
such that 
\begin{align*}
\mathrm{EZ}_2(\delta\otimes \delta_{n-m,n})\cap \Delta_{1,\infty}=-\mathrm{EZ}_2(\delta\otimes \delta_{n,n-m})\cap \Delta_{1,\infty}.
\end{align*}
The claim follows. \\
Let $m>0$. The triviality of $\mathrm{H}_1(\mathcal{C}_\bullet^y(\Delta_{\omega,\infty}))$ implies that
\begin{align*}
\delta_{n,n+m}-\sum_{k=0}^{m-1} \delta_{n+k,n+k+1} \in \partial_2\left(\mathcal{C}_2^y(\Delta_{\omega,\infty})\right),
\end{align*}
and by the same reasoning as above we find that  
\begin{align*}
\mathrm{EZ}_2(\delta\otimes \delta_{n,n+m})\cap \Delta_{1,\infty}=\sum_{k=0}^{m-1}\mathrm{EZ}_2(\delta\otimes \delta_{n+k,n+k+1})\cap \Delta_{1,\infty}.
\end{align*}
In conclusion,
\begin{align*}
\sum_{n\in \mathbb{Z}} \mathrm{EZ}_2(\delta\otimes \delta_{n,n+m})\cap \Delta_{1,\infty}&=\sum_{n\in \mathbb{Z}}\sum_{k=0}^{m-1}\mathrm{EZ}_2(\delta\otimes \delta_{n+k,n+k+1})\cap \Delta_{1,\infty} \\
&=\sum_{k=0}^{m-1}\sum_{n\in \mathbb{Z}}\mathrm{EZ}_2(\delta\otimes \delta_{n+k,n+k+1})\cap \Delta_{1,\infty}\\
&=\sum_{k=0}^{m-1}\sum_{n\in \mathbb{Z}}\mathrm{EZ}_2(\delta\otimes \delta_{n,n+1})\cap \Delta_{1,\infty},
\end{align*}
and the lemma follows.
\end{proof}
\end{lemma}
Consequently, identity \eqref{identity1} is equivalent to 
\begin{align}\label{identity2}
\sum_{n\in \mathbb{Z}} \mathrm{EZ}_2(\delta\otimes \delta_{n,n+1})\cap \Delta_{1,\infty}\overset{!}{=}-\delta\cap \Delta_{\omega,\infty}.
\end{align}
The explicit formula for $\mathrm{EZ}_2$ implies that if the geodesic segments $\delta$ and $\delta_{n,n+1}$ do not intersect, then $\mathrm{EZ}_2(\delta\otimes \delta_{n,n+1})\in C_2(\mathcal{H}_\infty^2\setminus \Delta_{1,\infty})$. This shows:
\begin{lemma}
If $\delta$ and $\delta_{n,n+1}$ do not intersect, then 
\begin{align*}
\mathrm{EZ}_2(\delta\otimes \delta_{n,n+1})\cap \Delta_{1,\infty}=0.
\end{align*}
In particular, if $\delta\cap \Delta_{\omega,\infty}=0$, then identity \eqref{identity2} holds.
\end{lemma}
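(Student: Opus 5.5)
The first claim follows from the support of $\mathrm{EZ}_2(\delta\otimes\delta_{n,n+1})$, and the second from identity \eqref{identity2} once the left-hand side is shown to vanish term by term.

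\textbf{First claim.} By the explicit formula, $\mathrm{EZ}_2(\delta\otimes \delta_{n,n+1})$ equals
\begin{align*}
(\delta\circ s_1,\delta_{n,n+1}\circ s_0)-(\delta\circ s_0,\delta_{n,n+1}\circ s_1).
\end{align*}
Each of these singular $2$-simplices takes values in the product $\delta\times\delta_{n,n+1}\subset\mathcal{H}_\infty^2$. Here $\delta$ and $\delta_{n,n+1}$ also denote the images of the geodesic segments.

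By Lemma \ref{computationdeltavp}, $\Delta_{1,\infty}$ is the diagonal $\{(z,z)\}$. A point $(a,b)\in\delta\times\delta_{n,n+1}$ lies on the diagonal only if $a=b\in\delta\cap\delta_{n,n+1}$. If the segments are disjoint, the chain $\mathrm{EZ}_2(\delta\otimes\delta_{n,n+1})$ is therefore supported in $\mathcal{H}_\infty^2\setminus\Delta_{1,\infty}$.

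Its boundary $\partial_2\mathrm{EZ}_2(\delta\otimes\delta_{n,n+1})$ is then the boundary of a $2$-chain in the complement. It is therefore zero in $\widetilde{\mathrm{H}}_1(\mathcal{H}_\infty^2\setminus\Delta_{1,\infty})$, and by definition of the signed intersection number this class is $\mathrm{EZ}_2(\delta\otimes\delta_{n,n+1})\cap\Delta_{1,\infty}$. So the intersection number vanishes.

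\textbf{Second claim.} Suppose $\delta\cap\Delta_{\omega,\infty}=0$; I read this as the geodesic segment $\delta$ not meeting the geodesic $\Delta_{\omega,\infty}$ as sets. Every $\delta_{n,n+1}$ is a subsegment of $\Delta_{\omega,\infty}$, since its endpoints $\gamma_\omega^n\centerdot y$ and $\gamma_\omega^{n+1}\centerdot y$ lie on $\Delta_{\omega,\infty}$. Hence $\delta$ meets no $\delta_{n,n+1}$, every summand on the left of \eqref{identity2} vanishes by the first claim, and the left side is $0$.

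The right side $-\delta\cap\Delta_{\omega,\infty}$ is also $0$. For a segment disjoint from $\Delta_{\omega,\infty}$, both endpoints lie in the same component $X^\pm$. Thus $[\partial\delta]=0$ in $\widetilde{\mathrm{H}}_0(\mathcal{H}_\infty\setminus\Delta_{\omega,\infty})$, and \eqref{identity2} holds.

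\textbf{Main obstacle.} This is the interpretation of the hypothesis $\delta\cap\Delta_{\omega,\infty}=0$, since it could mean either disjointness or a zero signed count. If it means a zero signed count, $\delta$ may cross $\Delta_{\omega,\infty}$ twice, which a geodesic segment cannot do; but it might cross once, which would give signed count $\pm 1$. So for geodesic segments the two readings coincide: the count is $0$ exactly when the segments are disjoint. The one exception is $\delta$ lying inside $\Delta_{\omega,\infty}$, and this is excluded because the endpoints of $\delta$ lie in $\Gamma\centerdot x$, which avoids all the geodesics $\Delta_{\tau,\infty}$.
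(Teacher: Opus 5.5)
Your proof is correct and follows the same route as the paper. The explicit Eilenberg--Zilber formula shows that $\mathrm{EZ}_2(\delta\otimes\delta_{n,n+1})$ is supported in $\delta\times\delta_{n,n+1}$, hence lies in $C_2(\mathcal{H}_\infty^2\setminus\Delta_{1,\infty})$, so its boundary is null-homologous there; the second claim follows because each $\delta_{n,n+1}$ lies on $\Delta_{\omega,\infty}$. You also check that, for geodesic segments with endpoints in $\Gamma\centerdot x$, a vanishing signed intersection number is the same as disjointness. The paper uses this implicitly when it next treats the case where $\delta$ and $\Delta_{\omega,\infty}$ intersect.
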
  
It remains to consider the case $\delta\cap \Delta_{\omega,\infty}\neq 0$, that is, $\delta$ and $\Delta_{\omega,\infty}$ intersect. The action of the automorph $\gamma_\omega$ translates points on the geodesic $\Delta_{\omega,\infty}$ along its orientation and preserves the length of geodesic segments. Thus, there is precisely one integer $n_0\in \mathbb{Z}$ such that the geodesic segments  $\delta$ and $\delta_{n_0,n_0+1}$ intersect and, by the lemma above, identity \eqref{identity2} becomes
\begin{align}\label{identity3}
\mathrm{EZ}_2(\delta\otimes \delta_{n_0,n_0+1})\cap \Delta_{1,\infty}\overset{!}{=} -\delta\cap \Delta_{\omega,\infty}.
\end{align}
We will from now on write $\delta_1\coloneq \delta$ and $\delta_2\coloneq \delta_{n_0,n_0+1}$ and set $\Delta\coloneq \Delta_{\omega,\infty}$. Let $\vartheta\in \mathcal{H}_\infty$ be the unique intersection point of the geodesic segment $\delta_1$ and the geodesic $\Delta$, denote by $W\coloneq \mathrm{T}_\vartheta(\mathcal{H}_\infty)$ the tangent space at $\vartheta$ and consider the exponential map $\exp_\vartheta\colon W\rightarrow \mathcal{H}_\infty.$ Let $\lambda_1$ (respectively $\lambda_2$) $\in C_1(W)$ be the preimage of the geodesic segment $\delta_1$ (respectively $\delta_{2}$) $\in C_1(\mathcal{H}_\infty)$ under the map $(\exp_\vartheta)_\star$. Since $\delta_i$ is a geodesic segment and $\vartheta\in \delta_i$, there is a (up to scaling with a positive real number) unique strictly increasing continuous function $f_i\colon [0,1]\rightarrow \mathbb{R}$ which admits a zero in the open interval $(0,1)$ and a vector $v_i\in W$ such that $\lambda_i(t)= f_i(t)\cdot v_i$ for $i=1,2$. The tuple $(v_1,v_2)$ is a basis of $W$ and, depending of the orientation of $\delta_1$ and $\delta_2$, the basis $(v_1,v_2)$ is either positive or negative. By abuse of notation, we denote by 
\begin{align*}
\mathrm{EZ}_\bullet\colon C_\bullet(W)\otimes C_\bullet(W)\rightarrow C_\bullet(W^2)
\end{align*}
the Eilenberg--Zilber map for the product $W^2$, too. Then 
\begin{align*}
\left(\exp_\vartheta \times \exp_\vartheta\right)_\star\colon \mathrm{EZ}_2(\lambda_1\otimes \lambda_2)\mapsto \mathrm{EZ}_2(\delta_1\otimes \delta_2).
\end{align*}
Let $d\colon W^2\rightarrow W$ be the map sending a pair $(w_1,w_2)$ to the difference $w_1-w_2$. The two-chain $d_\star\left( \mathrm{EZ}_2(\lambda_1\otimes \lambda_2)\right)\in C_2(W)$ induces a class in the relative homology group $\mathrm{H}_2(W,W\setminus \lbrace 0\rbrace)$ whose image under the connecting homomorphism 
\begin{align*}
\mathrm{H}_2(W,W\setminus \lbrace 0\rbrace)\xrightarrow{\simeq} \mathrm{H}_1(W\setminus \lbrace 0\rbrace)=\mathbb{Z}
\end{align*}
is the signed intersection number $\mathrm{EZ}_2(\delta_1\otimes \delta_2)\cap \Delta_{1,\infty}$. The explicit formula for the Eilenberg--Zilber map on the summand $C_1(W)\otimes C_1(W)$ implies that 
\begin{align*}
d_\star\left( \mathrm{EZ}_2(\lambda_1\otimes \lambda_2)\right)=\left((f_1\circ s_1)\cdot v_1 - (f_2\circ s_0)\cdot v_2\right) -\left((f_1\circ s_0)\cdot v_1 - (f_2\circ s_1)\cdot v_2\right).
\end{align*}
Denote by $F\colon W\rightarrow  \mathbb{R}^2$ the isomorphism given by $(v_1,v_2)\mapsto (e_1,-e_2)$. One can choose the orientation on the real vector space $W$ such that the induced map 
\begin{align*}
F_\star\colon \mathrm{H}_2(W,W\setminus \lbrace 0\rbrace)\rightarrow \mathrm{H}_2(\mathbb{R}^2,\mathbb{R}^2\setminus \lbrace 0\rbrace)
\end{align*}
equals the map $\left(-\delta_1\cap \Delta\right) \cdot \mathrm{id}$. Finally, one verifies easily that the class of the two-chain 
\begin{align}
\left((f_1\circ s_1)\cdot e_1 + (f_2\circ s_0)\cdot e_2\right) -\left((f_1\circ s_0)\cdot e_1 + (f_2\circ s_1)\cdot e_2\right)\in C_2(\mathbb{R}^2)
\end{align}
in the relative homology group $\mathrm{H}_2(\mathbb{R}^2,\mathbb{R}^2\setminus \lbrace 0\rbrace)$ (which is canonically identified with $\mathbb{Z}$ by saying that the standard basis is positive) equals one and identity \eqref{identity3} follows.

\section{Evaluations and a conjecture of Darmon--Vonk}\label{section5}
The aim of the last section of this article is to prove the antisymmetry conjecture from the introduction. 
\subsection{The Darmon--Vonk case}\label{RMCalaDV}
Recall that the divisor map induces an exact sequence in cohomology
\begin{align*}
 \mathrm{H}^1(\Gamma,\mathcal{A}_{1}^\times)\rightarrow \mathrm{H}^1(\Gamma,\mathcal{M}_{1}^\times)\xrightarrow{\mathrm{div}} \mathrm{H}^1(\Gamma,\mathrm{Div}^\dagger(\mathcal{H}_p)).
\end{align*}
A \emph{rigid meromorphic cocycle} (in the sense of \cite{DarmonVonksingularmoduli}) is a class in $\mathrm{H}^1(\Gamma,\mathcal{M}_{1}^\times)$ whose associated divisor belongs to $\mathcal{RM}(\Gamma)$. Thus,
\begin{align*}
\mathrm{H}^1_{\mathrm{RC}}(\Gamma,\mathcal{M}_{1}^\times)\coloneq \mathrm{div}^{-1}\left(\mathcal{RM}(\Gamma)\right)
\end{align*}
is the group of rigid meromorphic cocycles. The classification result in \cite{GehrmannquaternionicRC} implies that
\begin{align*}
\mathcal{RM}(\Gamma)=\bigoplus_{\omega\in \Gamma\backslash \mathcal{H}_p^{\mathrm{RM}}} \mathbb{Z}\cdot [\mathcal{D}_\omega].
\end{align*}
Therefore, if $\mathcal{J}\in \mathrm{H}^1_{\mathrm{RC}}(\Gamma,\mathcal{M}_{1}^\times)$ is a rigid meromorphic cocycle, then there are finitely many RM points $\omega_1,\cdots,\omega_m\in \Gamma\backslash \mathcal{H}_p^{\mathrm{RM}}$ such that 
\begin{align*}
\mathrm{div}(\mathcal{J})=\sum_{i=1}^m n_i \cdot [\mathcal{D}_{\omega_i}]
\end{align*}
with $n_i\neq 0$. We say that $\mathcal{J}$ is \emph{regular at an RM point} if the RM point does not belong to any $\Gamma$-orbit of the RM points $\omega_1,\cdots,\omega_m$. \\
In the next lines, we explain how one can meaningfully  evaluate a rigid meromorphic cocycle at an RM point $\tau\in \mathcal{H}_p$. Let $K/\mathbb{Q}$ be the field of definition of $\tau$. The complement $X_\tau\coloneq (G\centerdot\tau)^c\subset \mathcal{H}_p$ of the $G$-orbit of $\tau$ is $G$-invariant. Thus, the group $\mathrm{Div}^\dagger(X_\tau)$ of locally finite divisors with support disjoint from $G\centerdot \tau$ is a $G$-module. We denote the preimage of $\mathrm{Div}^\dagger(X_\tau)$ under the divisor map by $\mathcal{M}_{\tau}^\times$. Observe:
\begin{lemma}\label{regulardefinitionmono}
The natural map 
\begin{align*}
\mathrm{H}^1(\Gamma,\mathcal{M}_{\tau}^\times)\rightarrow \mathrm{H}^1(\Gamma,\mathcal{M}_{1}^\times)
\end{align*}
induced by the inclusion $\mathcal{M}_{\tau}^\times \subset \mathcal{M}_{1}^\times$ is injective.
\begin{proof}
The long exact sequence attached to the short exact sequence of $G$-modules
\begin{align*}
0\rightarrow \mathcal{A}_{1}^\times\rightarrow \mathcal{M}_{\tau}^\times \rightarrow \mathrm{Div}^\dagger(X_\tau)\rightarrow 0
\end{align*}
together with the snake lemma implies that it suffices to prove that the homomorphism
\begin{align*}
\mathrm{H}^1(\Gamma,\mathrm{Div}^\dagger(X_\tau))\rightarrow \mathrm{H}^1(\Gamma,\mathrm{Div}^\dagger(\mathcal{H}_p))
\end{align*}
is injective. This statement follows if we can show that
\begin{align*}
\left(\mathrm{Div}^\dagger(\mathcal{H}_p)/\mathrm{Div}^\dagger(X_\tau)\right)^\Gamma\overset{!}{=}0.
\end{align*} 
Let $\Delta\in \mathrm{Div}^\dagger(\mathcal{H}_p)$ be a locally finite divisor such that for any $\gamma\in \Gamma$ the divisor $\Delta-\gamma\centerdot \Delta$ belongs to $\mathrm{Div}^\dagger(X_\tau)$. We decompose $\Delta=\Delta_\tau+\Delta_0$ with $\Delta_0\in \mathrm{Div}^\dagger(G\centerdot \tau)$ and $\Delta_\tau\in \mathrm{Div}^\dagger(X_\tau)$. Then for every $\gamma\in \Gamma$ we must have $\Delta_0-\gamma\centerdot \Delta_0\in \mathrm{Div}^\dagger(X_\tau)$ such that $\Delta_0-\gamma\centerdot \Delta_0=0$. In particular, $\Delta_{0}\in \mathrm{Div}^\dagger(\mathcal{H}_p)^\Gamma=0$ and the claim follows.
\end{proof}
\end{lemma}
The evaluation map $\mathrm{ev}_\tau\colon \mathcal{M}_{\tau}^\times\rightarrow \mathbb{C}_p^\times$, which assigns to every rigid meromorphic function $f$ the $p$-adic number $f(\tau)\in \mathbb{C}_p^\times$, is a well-defined $\Gamma_\tau$-module homomorphism and, thus, induces a homomorphism in cohomology 
\begin{align}\label{evaluationattauincoho}
\mathrm{ev}_\tau\colon \mathrm{H}^1(\Gamma,\mathcal{M}_{\tau}^\times)\xrightarrow{\mathrm{res}}\mathrm{H}^1(\Gamma_\tau^+,\mathcal{M}_{\tau}^\times)\rightarrow \mathrm{H}^1(\Gamma_\tau^+,\mathbb{C}_p^\times).
\end{align}
The choice of the generator of $\Gamma_\tau^+$ determines an identification
\begin{align*}
\mathrm{H}_1(\Gamma_\tau^+,\mathbb{Z})=\mathbb{Z}
\end{align*}
and we denote by $c_\tau\in \mathrm{H}_1(\Gamma_\tau^+,\mathbb{Z})$ the fundamental class corresponding to one.
\begin{definition}\label{defvalueii}
Let $\mathcal{J}\in \mathrm{H}^1_{\mathrm{RC}}(\Gamma,\mathcal{M}_{1}^\times)$ be a rigid meromorphic cocycle that is regular at $\tau$ and, with abuse of notation, identify $\mathcal{J}$ with its unique preimage in $\mathrm{H}^1(\Gamma,\mathcal{M}_{\tau}^\times)$. The \emph{value of $\mathcal{J}$ at $\tau$} is defined to be the image of $\mathcal{J}$ under the map 
\begin{align*}
\mathrm{H}^1(\Gamma,\mathcal{M}_{\tau}^\times)\xrightarrow{\mathrm{ev}_\tau} \mathrm{H}^1(\Gamma^+_\tau,\mathbb{C}_p^\times)  \xrightarrow{-\cap c_\tau} \mathbb{C}_p^\times.
\end{align*}
We denote this $p$-adic number by $\mathcal{J}[\tau]$.
\end{definition}
Clearly, every class in $\mathrm{H}^1(\Gamma,\mathcal{A}_{1}^\times)$ is a rigid meromorphic cocycle and, moreover, regular at $\tau$.  We note:
\begin{lemma}\label{corvalueatanalyticisunit}
Let $\mathcal{J}\in \mathrm{H}^1(\Gamma,\mathcal{A}_{1}^\times)$ be a class with values in the group of invertible rigid analytic functions. Then
\begin{align*}
\mathcal{J}[\tau]\in (\mathbb{C}_p^\times)_{\mathrm{tor}}\cdot \varepsilon_\tau^\mathbb{Z}
\end{align*}
where $\varepsilon_\tau\in K^\times$ is the unit associated to $\tau$. See the discussion after Corollary \ref{corollary1.15}.
\begin{proof}
We fix an isomorphism $\iota_p\colon B_p\rightarrow \mathrm{M}_2(\mathbb{Q}_p)$ such that $\iota_p(R\otimes \mathbb{Z}_p)=\mathrm{M}_2(\mathbb{Z}_p)$. This isomorphism gives rise to an identification of $\mathcal{H}_p$ with $\mathbb{P}^1(\mathbb{C}_p)\setminus \mathbb{P}^1(\mathbb{Q}_p)$. More precisely, the map
\begin{align*}
\mathbb{P}^1(\mathbb{C}_p)\setminus \mathbb{P}^1(\mathbb{Q}_p)\rightarrow  \mathcal{H}_p, \qquad \omega\mapsto \left[\left(\begin{array}{rr} \omega & -\omega^2\\ 1 & -\omega\end{array}\right)\right]
\end{align*}
is a desired isomorphism of rigid analytic spaces. We consider the coordinate function $z(x)=\frac{x_0}{x_1}$ on $\mathbb{P}^1$. The methods of \cite{Gehrmanninvertibleanalytic} show that the class of the \emph{trivial cocycle}
\begin{align*}
J_{\mathrm{triv}}\colon \Gamma\rightarrow \mathcal{A}_{1}^\times, \qquad \left(\begin{array}{rr} a & b \\ c & d\end{array}\right)\mapsto cz+d
\end{align*}
generates (up to torsion) the group $\mathrm{H}^1(\Gamma,\mathcal{A}_{1}^\times)$. Recall that the unit $\varepsilon_\tau$ was defined by the equation 
\begin{align*}
\gamma_\tau\cdot u=\varepsilon_\tau\cdot u \quad \mathrm{with} \quad u=\left(\begin{array}{rr} \tau & -\tau^2\\ 1 & -\tau\end{array}\right).
\end{align*}
A straightforward computation shows that $\gamma_\tau\cdot u=J_{\mathrm{triv}}(\gamma_\tau)(\tau)\cdot u$ such that $\varepsilon_\tau=J_{\mathrm{triv}}(\gamma_\tau)(\tau)=[J_{\mathrm{triv}}][\tau]$. The lemma follows.
\end{proof} 
\end{lemma}
From now on, let $\omega\in \mathcal{H}_p$ be an RM point which is not contained in the $G$-orbit of $\tau$ and let $\mathrm{T}\in \mathcal{I}_p$ be a Hecke operator. The class $\mathrm{T}\cdot [\mathcal{D}_\omega]$ lifts to a class $\mathcal{J}_\omega^\mathrm{T}\in \mathrm{H}^1_{\mathrm{RC}}(\Gamma,\mathcal{M}_1^\times)$ which is unique up to multiplication with a class in $\mathrm{H}^1(\Gamma,\mathcal{A}_1^\times)$. Clearly, the rigid meromorphic cocycle $\mathcal{J}_\omega^\mathrm{T}$ is regular at $\tau$ such that we can evaluate $\mathcal{J}_\omega^\mathrm{T}$ at $\tau$. Therefore, we can define the $p$-adic number
\begin{align}
J_p^\mathrm{T}(\tau,\omega)\coloneq \mathcal{J}_\omega^\mathrm{T}[\tau]\in \mathbb{C}_p^\times,
\end{align}
which is well-defined up to multiplication with an element in $(\mathbb{C}_p^\times)_{\mathrm{tor}}\cdot \varepsilon_\tau^\mathbb{Z}$. Numerical experiments show that the assignment $J_p^\mathrm{T}(\tau,\omega)$ enjoys striking parallels with the difference of two classical singular moduli. In particular, Darmon--Vonk conjecture that it is \emph{antisymmetric} in the argument (see \cite{DarmonVonksingularmoduli}, Section 3.3, Remark 3.19). 
\begin{conjecture}\label{mainconjecturetoprove}
We have 
\begin{align*}
J_p^\mathrm{T}(\omega,\tau)=J_p^\mathrm{T}(\tau,\omega)^{-1}
\end{align*}
up to multiplication with an element in $(\mathbb{C}_p^\times)_{\mathrm{tor}}\cdot \varepsilon_\tau^\mathbb{Z}\cdot \varepsilon_\omega^\mathbb{Z}$.
\end{conjecture}
We will give a proof of this conjecture in the next sections by expressing the $p$-adic number $J_p^\mathrm{T}(\omega,\tau)$ as the value of a rigid meromorphic cocycle for $\Gamma^2$ at the pair $(\tau,\omega)$.
\subsection{The four-dimensional case}
Recall that there is a natural exact sequence of Hecke modules
\begin{align*}
\mathrm{H}^2(\Gamma^2,\mathcal{A}_{2}^\times)\rightarrow \mathrm{H}^2(\Gamma^2,\mathcal{M}_{\mathrm{rq}}^\times)\xrightarrow{\mathrm{div}} \mathrm{H}^2(\Gamma^2,\mathrm{Div}_{\mathrm{rq}}^\dagger(\mathcal{H}_p^2)).
\end{align*}
A \emph{rigid meromorphic cocycle for $\Gamma^2$} is a class in $\mathrm{H}^2(\Gamma^2,\mathcal{M}_{\mathrm{rq}}^\times)$ whose associated divisor is a Kudla--Millson divisor. Thus, 
\begin{align*}
\mathrm{H}^2_{\mathrm{RC}}(\Gamma^2,\mathcal{M}_{\mathrm{rq}}^\times)\coloneq \mathrm{div}^{-1}\left(\mathcal{KM}(\Gamma^2)\right)
\end{align*}
is the group of rigid meromorphic cocycles for $\Gamma^2$.\\
In the next lines, we define the value of a rigid meromorphic cocycle for $\Gamma^2$ at a pair of RM points. Fix two RM points $\omega,\tau\in \mathcal{H}_p$ which do not belong to the same $G$-orbit. That is, $(\tau,\omega)\notin \Delta_{v,p}$ for all elements $v\in G$. In particular, every rigid meromorphic function $f\in \mathcal{M}_{\mathrm{rq}}^\times$ is regular at the pair $(\tau,\omega)$ and we can consider the well-defined map $\mathrm{ev}_{(\tau,\omega)}\colon \mathcal{M}_{\mathrm{rq}}^\times\rightarrow \mathbb{C}_p^\times$ which maps every rigid meromorphic function $f$ to the $p$-adic number $f(\tau,\omega)$. Clearly, the evaluation map is a $\Gamma_\tau\times \Gamma_\omega$-module homomorphism and, thus, induces a homomorphism in cohomology 
\begin{align}
\mathrm{ev}_{(\tau,\omega)}\colon \mathrm{H}^2(\Gamma^2,\mathcal{M}_{\mathrm{rq}}^\times)\xrightarrow{\res} \mathrm{H}^2(\Gamma_\tau^+\times \Gamma_\omega^+,\mathcal{M}_{\mathrm{rq}}^\times)\rightarrow \mathrm{H}^2(\Gamma_\tau^+\times \Gamma_\omega^+,\mathbb{C}_p^\times).
\end{align}
The K\"unneth Formula for group homology yields that the \emph{homology cross-product} 
\begin{align*}
\times\colon \mathrm{H}_1(\Gamma_\tau^+,\mathbb{Z})\otimes \mathrm{H}_1(\Gamma_\omega^+,\mathbb{Z})\rightarrow \mathrm{H}_2(\Gamma_\tau^+\times \Gamma_\omega^+,\mathbb{Z})
\end{align*}
is an isomorphism and we identify 
\begin{align*}
\mathrm{H}_2(\Gamma_\tau^+\times \Gamma_\omega^+,\mathbb{Z})=\mathbb{Z}
\end{align*}
by mapping the generator $c_{\tau,\omega}\coloneq c_\tau\times c_\omega$ to one. Finally, we can define:
\begin{definition}
Let $\mathcal{J}\in \mathrm{H}^2(\Gamma^2,\mathcal{M}_{\mathrm{rq}}^\times)$ be a class. The \emph{value of $\mathcal{J}$ at the pair $(\tau,\omega)$} is defined to be the image of $\mathcal{J}$ under the homomorphism 
\begin{align*}
\mathrm{H}^2(\Gamma^2,\mathcal{M}_{\mathrm{rq}}^\times) \xrightarrow{\mathrm{ev}_{(\tau,\omega)}}  \mathrm{H}^2(\Gamma_\tau^+\times\Gamma_\omega^+,\mathbb{C}_p^\times)\xrightarrow{\cap c_{\tau,\omega}}  \mathbb{C}_p^\times.
\end{align*}
We denote this $p$-adic number by $\mathcal{J}[\tau,\omega]$.
\end{definition}
The next lemma shows that the defined $p$-adic number is always a root of unity if the cohomology class $\mathcal{J}$ takes values in the group $\mathcal{A}_{2}^\times$:
\begin{lemma}\label{lemmasigma4}
The image of the restriction map
\begin{align*}
\res\colon \mathrm{H}^2(\Gamma^2,\mathcal{A}_{2}^\times) \rightarrow \mathrm{H}^2(\Gamma_\omega^+\times \Gamma_\tau^+,\mathcal{A}_{2}^\times)
\end{align*}
is torsion. Thus, for every rigid meromorphic cocycle $\mathcal{J}\in \mathrm{H}^2(\Gamma^2,\mathcal{A}_{2}^\times)$ we have
\begin{align*}
\mathcal{J}[\tau,\omega]\in (\mathbb{C}_p^\times)_{\mathrm{tor}}.
\end{align*}
\begin{proof}
Theorem \ref{propropropbloed} implies that we have a short exact sequence of $\Gamma^2$-modules
\begin{align*}
1\rightarrow \mathbb{C}_p^\times \rightarrow \mathcal{A}_{1}^\times  \oplus \mathcal{A}_{1}^\times \rightarrow \mathcal{A}_{2}^\times\rightarrow 1.
\end{align*}
Thus, we derive an exact sequence
\begin{align*}
\mathrm{H}^2(\Gamma^2,\mathcal{A}_{1}^\times\otimes \mathbb{Z})\ \oplus  \ \mathrm{H}^2(\Gamma^2,\mathbb{Z}\otimes \mathcal{A}_{1}^\times)\rightarrow \mathrm{H}^2(\Gamma^2,\mathcal{A}_{2}^\times)\rightarrow \mathrm{H}^3(\Gamma^2,\mathbb{C}_p^\times).
\end{align*}
The group $\mathrm{H}^3(\Gamma^2,\mathbb{C}_p^\times)$ is finite by Corollary \ref{computationofh3gamma2} and, therefore, it is enough to prove that the images of the restriction maps
\begin{align*}
\res\colon \mathrm{H}^2(\Gamma^2,\mathcal{A}_{1}^\times\otimes \mathbb{Z})\rightarrow \mathrm{H}^2(\Gamma_\tau^+\times \Gamma_\omega^+,\mathcal{A}_{1}^\times\otimes \mathbb{Z})
\end{align*}
and
\begin{align*}
\res\colon \mathrm{H}^2(\Gamma^2, \mathbb{Z}\otimes \mathcal{A}_{1}^\times)\rightarrow \mathrm{H}^2(\Gamma_\tau^+\times \Gamma_\omega^+,\mathbb{Z}\otimes \mathcal{A}_{1}^\times)
\end{align*}
are torsion. To see this, note that the K\"unneth Formula for group cohomology implies that the cohomology cross-product
\begin{align*}
\mathrm{H}^2(\Gamma,\mathcal{A}_{1}^\times)\otimes \mathrm{H}^0(\Gamma,\mathbb{Z}) \ \oplus \ \mathrm{H}^0(\Gamma,\mathcal{A}_{1}^\times)\otimes \mathrm{H}^2(\Gamma,\mathbb{Z}) \xrightarrow{\times} \mathrm{H}^2(\Gamma^2,\mathcal{A}_{1}^\times\otimes \mathbb{Z})
\end{align*}
is a monomorphism with finite cokernel and that the cohomology cross-product
\begin{align*}
\mathrm{H}^1(\Gamma_\tau^+,\mathcal{A}_{1}^\times)\otimes \mathrm{H}^1(\Gamma_\omega^+,\mathbb{Z})\xrightarrow{\times} \mathrm{H}^2(\Gamma_\tau^+\times \Gamma_\omega^+,\mathcal{A}_{1}^\times\otimes \mathbb{Z})
\end{align*}
is an isomorphism. Finally, the cohomology cross-product is compatible with restriction maps such that the diagram
\begin{center}
\begin{minipage}{\linewidth}
\centering
\begin{tikzcd}
\mathrm{H}^2(\Gamma,\mathcal{A}_{1}^\times)\otimes \mathrm{H}^0(\Gamma,\mathbb{Z}) \ \oplus \ \mathrm{H}^0(\Gamma,\mathcal{A}_{1}^\times)\otimes \mathrm{H}^2(\Gamma,\mathbb{Z}) \arrow[r]{r}{\times} \arrow[d]{d}{\res} & \mathrm{H}^2(\Gamma^2,\mathcal{A}_{1}^\times\otimes \mathbb{Z})\arrow[d]{d}{\res} \\
\mathrm{H}^1(\Gamma_\tau^+,\mathcal{A}_{1}^\times)\otimes \mathrm{H}^1(\Gamma_\omega^+,\mathbb{Z}) \ \arrow[r]{r}{\times} & \mathrm{H}^2(\Gamma_\tau^+\times \Gamma_\omega^+,\mathcal{A}_{1}^\times\otimes \mathbb{Z})
 \end{tikzcd}
\end{minipage}
\end{center} 
commutes. Clearly, the left vertical arrow must be the zero map and the claim follows for the first restriction map. After changing the roles of $\mathcal{A}_{1}^\times$ and $\mathbb{Z}$, we derive the claim for the second restriction map, too, and the lemma follows.
\end{proof}
\end{lemma} 
Lemma \ref{lemmasigma4} enables us to propose a candidate for the $p$-adic number $J_p^\mathrm{T}(\tau,\omega)$ in terms of rigid meromorphic cocycles for $\Gamma^2$. Let $\mathrm{T}\in \mathcal{I}_p$ be a Hecke operator. By Definition \ref{definitionip}, there is a class $\mathcal{J}_{1}^{\mathrm{T}}\in \mathrm{H}^2(\Gamma^2,\mathcal{M}_{\mathrm{rq}}^\times)$ that satisfies
\begin{align*}
\mathrm{div}(\mathcal{J}_{1}^{\mathrm{T}})=(\mathrm{T}\times 1)\cdot [\mathcal{D}_1]\in \mathcal{KM}(\Gamma^2)
\end{align*}
and this class is unique up to multiplication with a class in $\mathrm{H}^2(\Gamma^2,\mathcal{A}_{2}^\times)$. In particular, the class $\mathcal{J}_1^{\mathrm{T}}$ is a rigid meromorphic cocycle and we define
\begin{align}
\hat{J}_p^\mathrm{T}(\tau,\omega)\coloneq \mathcal{J}_{1}^{\mathrm{T}}[\tau,\omega]\in \mathbb{C}_p^\times.
\end{align}
This definition is well-defined up to multiplication with a root of unity in $\mathbb{C}_p$. The function $\hat{J}_p^\mathrm{T}$ has the advantage that its antisymmetric behaviour is easy to prove:
\begin{proposition}\label{propforantisymmetry}
Let $\mathrm{T} \in \mathcal{I}_p$ be a Hecke operator. Then for all pairs of RM points $(\tau,\omega)$ not belonging to the same $G$-orbit we have 
\begin{align*}
\hat{J}_p^\mathrm{T}(\omega,\tau)=\hat{J}_p^\mathrm{T}(\tau,\omega)^{-1},
\end{align*}
up to a root of unity.
\begin{proof}
Recall that we denoted by $\varsigma_p\colon \mathcal{H}_p^2\rightarrow \mathcal{H}_p^2$ the isomorphism of rigid analytic spaces respectively by $t\colon G^2\rightarrow G^2$ the isomorphism of groups which switches the coordinates. The morphism $\varsigma_p$ induces the homomorphism of groups
\begin{align*}
\varsigma_p\colon \mathcal{M}_{\mathrm{rq}}^\times \rightarrow \mathcal{M}_{\mathrm{rq}}^\times, \qquad f\mapsto f\circ \varsigma_p
\end{align*}
which is compatible with $t$. We derive a map in cohomology 
\begin{align*}
\sigma^\prime\coloneq (t,\varsigma)^\star\colon \mathrm{H}^2(\Gamma^2,\mathcal{M}_\mathrm{rq}^\times)\rightarrow \mathrm{H}^2(\Gamma^2,\mathcal{M}_\mathrm{rq}^\times)
\end{align*}
that fits into the commutative diagram
\begin{center}
\begin{minipage}{\linewidth}
\centering
\begin{tikzcd}
\mathrm{H}^2(\Gamma^2,\mathcal{M}_{\mathrm{rq}}^\times) \arrow[r]{r}{\mathrm{div}} \arrow[d]{d}{\sigma^\prime} & \mathrm{H}^2(\Gamma^2,\mathrm{Div}^\dagger_{\mathrm{rq}}(\mathcal{H}_p^2))\arrow[d]{d}{\sigma}\\
\mathrm{H}^2(\Gamma^2,\mathcal{M}_{\mathrm{rq}}^\times) \arrow[r]{r}{\mathrm{div}} & \mathrm{H}^2(\Gamma^2,\mathrm{Div}^\dagger_{\mathrm{rq}}(\mathcal{H}_p^2)).
 \end{tikzcd}
\end{minipage}
\end{center}
Corollary \ref{symmetryofdn} tells us that $\sigma$ fixes every Kudla--Millson divisor. Hence, $\sigma$ fixes $(\mathrm{T}\times 1)\cdot [\mathcal{D}_1]$ such that the commutative diagram above implies the equality 
\begin{align*}
\mathrm{div}\left(\sigma^\prime\left(\mathcal{J}_1^\mathrm{T}\right)\right)=\mathrm{div}\left(\mathcal{J}_1^\mathrm{T}\right).
\end{align*}
In particular, we derive that
\begin{align}\label{keyidentityinlastsecton}
\sigma^\prime\left(\mathcal{J}_1^\mathrm{T}\right)=\mathcal{J}_1^\mathrm{T} \ \mathrm{mod} \ \mathrm{H}^2(\Gamma^2,\mathcal{A}_2^\times).
\end{align}
Now, the homomorphism $\sigma^\prime$ is compatible with restriction maps and, moreover, we have the obvious identity
\begin{align*}
\mathrm{ev}_{(\tau,\omega)}=\mathrm{ev}_{(\omega,\tau)}\circ \varsigma_p\colon \mathcal{M}_{\mathrm{rq}}^\times \rightarrow \mathbb{C}_p^\times.
\end{align*}
Thus, the diagram
\begin{center}
\begin{minipage}{\linewidth}
\centering
\begin{tikzcd}
\mathrm{H}^2(\Gamma^2,\mathcal{M}_{\mathrm{rq}}^\times)\arrow[rr]{rr}{\sigma^\prime} \arrow[d]{d}{\res} & & \mathrm{H}^2(\Gamma^2,\mathcal{M}_{\mathrm{rq}}^\times) \arrow[d]{d}{\res}\\
\mathrm{H}^2(\Gamma_\tau^+\times \Gamma_\omega^+,\mathcal{M}_{\mathrm{rq}}^\times)\arrow[rr]{rr}{\sigma^\prime} \arrow[d]{d}{\mathrm{ev}_{(\tau,\omega)}} & & \mathrm{H}^2(\Gamma_\omega^+\times \Gamma_\tau^+,\mathcal{M}_{\mathrm{rq}}^\times) \arrow[d]{d}{\mathrm{ev}_{(\omega,\tau)}}\\
 \mathrm{H}^2(\Gamma_\tau^+\times \Gamma_\omega^+,\mathbb{C}_p^\times)\arrow[rr]{rr}{(t,\mathrm{id})^\star} &  & \mathrm{H}^2(\Gamma_\omega^+\times \Gamma_\tau^+,\mathbb{C}_p^\times) 
 \end{tikzcd}
\end{minipage}
\end{center}
commutes. Finally, the isomorphism in homology 
\begin{align*}
t_\star=(t,\mathrm{id})_\star\colon \mathrm{H}_2(\Gamma_\omega^+\times \Gamma_\tau^+,\mathbb{Z})\rightarrow \mathrm{H}_2(\Gamma_\tau^+\times \Gamma_\omega^+,\mathbb{Z})
\end{align*}
maps the generator $c_{\omega,\tau}$ to $-c_{\tau,\omega}$. The proposition is now an immediate consequence of this generator swap, identity \eqref{keyidentityinlastsecton} and the commutative diagram above.
\end{proof}
\end{proposition}

\subsection{Comparison of the functions $J_p$ and $\hat{J}_p$}
In this section we prove the following theorem:
\begin{theorem}\label{proptoproveconjecture}
We have 
\begin{align*}
J_p^\mathrm{T}(\tau,\omega)^{-2}= \hat{J}_p^{\mathrm{T}}(\tau,\omega)
\end{align*}
up to multiplication with an element in $(\mathbb{C}_p^\times)_{\mathrm{tor}}\cdot \varepsilon_\tau^\mathbb{Z}$.
\end{theorem}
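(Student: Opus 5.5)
The plan is to relate the two evaluations by restricting $\mathcal{J}_1^\mathrm{T}$ in the second variable to $\Gamma_\omega^+$ and evaluating at $\omega$ there first, which produces a class for $\Gamma\times\Gamma_\omega^+$ with values in rigid meromorphic functions of one variable. Concretely, consider the restriction map
\[
\mathrm{ev}_{(-,\omega)}\colon \mathcal{M}_{\mathrm{rq}}^\times\rightarrow \mathcal{M}_1^\times,\qquad f\mapsto f(-,\omega).
\]
It is defined because no $\Delta_{v,p}$ contains the whole line $\mathcal{H}_p\times\lbrace\omega\rbrace$, and it is $G\times\Gamma_\omega$-equivariant. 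It is compatible with $\mathrm{div}$ and the intersection map $\inte$ of Section \ref{section4}: for $f\in\mathcal{M}_{\mathrm{rq}}^\times$ we have $\mathrm{div}(f(-,\omega))=\inte(\mathrm{div} f)$. Checking this reduces to a local multiplicity-one statement: $\Delta_{v,p}$ meets $\mathcal{H}_p\times\lbrace\omega\rbrace$ transversally at $(v\centerdot\omega,\omega)$, since it is the graph of $z\mapsto v\centerdot z$. We therefore obtain a map of short exact sequences
\[
(\mathcal{A}_2^\times\to\mathcal{M}_{\mathrm{rq}}^\times\to \mathrm{Div}^\dagger_{\mathrm{rq}})\longrightarrow (\mathcal{A}_1^\times\to\mathcal{M}_1^\times\to\mathrm{Div}^\dagger(\mathcal{H}_p)).
\]
Applying it to $\mathcal{J}_1^\mathrm{T}$ restricted to $\Gamma\times\Gamma_\omega^+$ gives a class $\mathrm{ev}_{(-,\omega)}(\mathcal{J}_1^\mathrm{T})\in\mathrm{H}^2(\Gamma\times\Gamma_\omega^+,\mathcal{M}_1^\times)$. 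By Theorem \ref{mainthm}, with the orientation fixed there, its divisor is $(2\,\mathrm{T}[\mathcal{D}_\omega])\times\eta_\omega$.

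Next I would use the Künneth decomposition for $\Gamma\times\Gamma_\omega^+$, where the module is trivial on the second factor. Since $\Gamma_\omega^+\simeq\mathbb{Z}$, we get
\[
\mathrm{H}^2(\Gamma\times\Gamma_\omega^+,\mathcal{M}_1^\times)\simeq \mathrm{H}^1(\Gamma,\mathcal{M}_1^\times)\times\eta_\omega\ \oplus\ \mathrm{H}^2(\Gamma,\mathcal{M}_1^\times)\times\kappa_\omega,
\]
naturally in the coefficients and compatibly with $\mathrm{div}$. On the one hand, $(\mathcal{J}_\omega^\mathrm{T})^2\times\eta_\omega$ has the same divisor as $\mathrm{ev}_{(-,\omega)}(\mathcal{J}_1^\mathrm{T})$. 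So the difference $E$ of the two classes lies, modulo the image of $\mathrm{H}^2(\Gamma\times\Gamma_\omega^+,\mathcal{A}_1^\times)$, in the $\kappa_\omega$-component. On the other hand, $\mathcal{J}_1^\mathrm{T}$ is regular at $(\tau,\omega)$, and all functions are regular along $(G\centerdot\tau)\times\lbrace\omega\rbrace$, so we may work with $\mathcal{M}_\tau^\times$ coefficients (using Lemma \ref{regulardefinitionmono}). Then
\[
\hat J_p^\mathrm{T}(\tau,\omega)=\mathrm{ev}_\tau\big(\mathrm{res}_{\Gamma_\tau^+\times\Gamma_\omega^+}\mathrm{ev}_{(-,\omega)}\mathcal{J}_1^\mathrm{T}\big)\cap(c_\tau\times c_\omega).
\]
Now cap each component. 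The $\kappa_\omega$-component dies, since its restriction to $\Gamma_\tau^+$ lands in $\mathrm{H}^2(\Gamma_\tau^+,-)=0$. The $\eta_\omega$-component gives $\mathcal{J}[\tau]^{\pm1}$ via $(a\times\eta_\omega)\cap(c_\tau\times c_\omega)=\pm(a\cap c_\tau)(\eta_\omega\cap c_\omega)$. The sign comes from the cocycle normalization of $\mathcal{D}_\omega\times\eta_\omega$ in Section \ref{section4}, where a minus sign was inserted; I would track it to get the exponent $-2$ rather than $+2$. The $\mathcal{A}_1^\times$-valued ambiguity contributes elements of $\mathrm{H}^1(\Gamma,\mathcal{A}_1^\times)\times\eta_\omega$, whose values lie in $(\mathbb{C}_p^\times)_{\mathrm{tor}}\cdot\varepsilon_\tau^\mathbb{Z}$ by Lemma \ref{corvalueatanalyticisunit}. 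The other summand, $\mathrm{H}^2(\Gamma,\mathcal{A}_1^\times)\times\kappa_\omega$, again dies on restriction.

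The main obstacle is the precise bookkeeping in this comparison. Two points need care. First, Theorem \ref{mainthm} is an equality of divisor classes, so the difference $E$ of lifts is only controlled modulo $\mathrm{H}^2(\Gamma\times\Gamma_\omega^+,\mathcal{A}_1^\times)$ plus $\kappa_\omega$-terms, and I must verify that every such term evaluates to a torsion-times-$\varepsilon_\tau^{\mathbb{Z}}$ quantity. Second, the sign and orientation conventions must be tracked: the orientation chosen in Theorem \ref{mainthm}, the minus sign in the cocycle for $\mathcal{D}_\omega\times\eta_\omega$, and the identification of $c_\tau\times c_\omega$ with the Künneth generator. I expect the natural compatibility of the Künneth isomorphism with $\mathrm{div}$ and with cap products to handle the first point, and an explicit cocycle computation on $\delta_{n,n+1}$ chains to fix the second.
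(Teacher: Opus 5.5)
Your proposal is correct and follows the paper's proof essentially step for step. You restrict along $\mathcal{H}_p\times\{\omega\}$ via $\mathrm{ev}_{(-,\omega)}$, and use its compatibility with $\mathrm{int}_{(-,\omega)}$ together with Theorem \ref{mainthm} to identify $\mathrm{ev}_{(-,\omega)}(\mathcal{J}_1^{\mathrm{T}})$ with $(\mathcal{J}_\omega^{\mathrm{T}})^2\times\eta_\omega$ up to $\mathcal{A}_1^\times$-valued classes. Then you use the K\"unneth splitting, the vanishing of the $\kappa_\omega$-part on $\Gamma_\tau^+$, the identity $(\mathcal{I}\times\eta_\omega)\cap c_{\tau,\omega}=(\mathcal{I}\cap c_\tau)^{-1}$, and Lemma \ref{corvalueatanalyticisunit} for the analytic ambiguity. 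The sign in that identity is the $(-1)^{rs}$ of the cochain cross-product, which is the same sign you saw in the Section~4 cocycle.

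The only real deviation is how you justify $\mathrm{div}\circ\mathrm{ev}_{(-,\omega)}=\mathrm{int}_{(-,\omega)}\circ\mathrm{div}$. You use local transversality of the graph $\Delta_{v,p}$ with the horizontal line, whereas the paper goes through the DGL product expansion and the explicit functions $f_v$. Your argument is a perfectly good, and arguably cleaner, substitute.
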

To compare the functions $J_p^\mathrm{T}$ and $\hat{J}_p^\mathrm{T}$, we need the following lemma:
\begin{lemma}\label{lemwelldefinedeva}
Let $f\in \mathcal{M}_{\mathrm{rq}}^\times\cap \mathcal{A}_{2}$ be a rigid analytic function with rational quadratic divisor. Then the image of $f$ under the homomorphism of rings
\begin{align*}
\eva\colon \mathcal{A}_{2}\rightarrow \mathcal{A}_{1}, \qquad g\mapsto g(-,\omega)
\end{align*}
is non-zero.
\begin{proof}
The line
\begin{align*}
\mathcal{H}_p\times \omega=\lbrace (z,\omega)\mid z\in \mathcal{H}_p\rbrace\subset \mathcal{H}_p^2
\end{align*}
is a prime divisor on $\mathcal{H}_p^2$ which is not rational quadratic. If $f(-,\omega)=0$, then $\mathcal{H}_p\times \omega$ is contained in the set of zeros of $f$. Thus, the coefficient of $\mathcal{H}_p\times \omega$ in $\mathrm{div}(f)$ would be non-zero, contradicting the assumption that $\mathrm{div}(f)$ is rational quadratic.
\end{proof}
\end{lemma}
\begin{corollary}
The homomorphism 
\begin{align*}
\eva\colon \mathcal{M}_{\mathrm{rq}}^\times \rightarrow \mathcal{M}_{1}^\times, \qquad f\mapsto f(-,\omega)
\end{align*}
is a well-defined homomorphism of groups. Moreover, if we identify $\mathcal{M}_{1}^\times$ with the $G\times \Gamma_\omega$-module $\mathcal{M}_{1}^\times\otimes \mathbb{Z}$, then the map $\eva$ is a $G\times \Gamma_\omega$-module homomorphism.
\end{corollary}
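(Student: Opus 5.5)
The statement to prove is the last Corollary. It has two parts: $\eva$ is a well-defined group homomorphism $\mathcal{M}_{\mathrm{rq}}^\times\to\mathcal{M}_1^\times$, and it is $G\times\Gamma_\omega$-equivariant. I will reduce both parts to Lemma \ref{lemwelldefinedeva}, working locally.

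\textbf{Well-definedness.} First I fix $f\in\mathcal{M}_{\mathrm{rq}}^\times$ and an affinoid $V\subset\mathcal{H}_p$. The product $V\times U$, for a small affinoid neighbourhood $U$ of $\omega$, meets only finitely many of the divisors $\Delta_{u,p}$ in the support of $\mathrm{div}(f)$, by local finiteness. On such a product, and after shrinking if needed so that the ring of analytic functions is a UFD (or simply using that $\mathcal{M}_2$ is locally a fraction field), I write $f=g/h$ with $g,h$ rigid analytic and with divisors supported on rational quadratic divisors.

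To see that the local denominator can be taken with rational quadratic divisor, I use Proposition \ref{propdivissurjective}. Choose a global $h\in\mathcal{M}_{\mathrm{rq}}^\times$ whose divisor is the polar part of $\mathrm{div}(f)$; it is effective and rational quadratic, so $h$ is analytic by the exactness in Proposition \ref{propdivissurjective}. Then $g:=fh$ is analytic with effective rational quadratic divisor.

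Lemma \ref{lemwelldefinedeva} now gives $g(-,\omega)\neq0$ and $h(-,\omega)\neq0$. Hence $f(-,\omega):=g(-,\omega)/h(-,\omega)$ is a non-zero meromorphic function on $\mathcal{H}_p$. It is independent of the choice of $h$: if $fh=g$ and $fh'=g'$, then $gh'=g'h$ as analytic functions on $\mathcal{H}_p^2$, and restricting to the line $\mathcal{H}_p\times\{\omega\}$ gives $g(-,\omega)h'(-,\omega)=g'(-,\omega)h(-,\omega)$.

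\textbf{Homomorphism.} Multiplicativity then follows from the fact that restriction $\mathcal{A}_2\to\mathcal{A}_1$ is a ring homomorphism: if $f_1=g_1/h_1$ and $f_2=g_2/h_2$, then $f_1f_2=g_1g_2/(h_1h_2)$ and restricting each factor to the line gives the product.

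\textbf{Equivariance.} For $(\gamma_1,\gamma_2)\in G\times\Gamma_\omega$ the action is $((\gamma_1,\gamma_2)\centerdot f)(z_1,z_2)=f(\gamma_1^{-1}z_1,\gamma_2^{-1}z_2)$. Evaluating at $z_2=\omega$ and using $\gamma_2^{-1}\centerdot\omega=\omega$ gives $f(\gamma_1^{-1}z_1,\omega)=(\gamma_1\centerdot\eva(f))(z_1)$. The factor $\Gamma_\omega$ therefore acts trivially on the target, which matches the module $\mathcal{M}_1^\times\otimes\mathbb{Z}$.

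\textbf{Main obstacle.} The only delicate point is that $\mathcal{H}_p\times\{\omega\}$ is a genuine prime divisor, so that "vanishing identically along it" forces a nonzero coefficient in $\mathrm{div}$. This is exactly what Lemma \ref{lemwelldefinedeva} already settles. The remaining care is only in the global choice of $h$ via Proposition \ref{propdivissurjective}, which avoids any local-to-global gluing.
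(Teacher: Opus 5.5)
Your proof is correct and follows the argument the paper leaves implicit. Using surjectivity of the divisor map you write $f=g/h$ with $g,h\in\mathcal{M}_{\mathrm{rq}}^\times\cap\mathcal{A}_2$, apply Lemma \ref{lemwelldefinedeva} to $g$ and $h$, check independence and multiplicativity by restricting identities in $\mathcal{A}_2$ to the line, and get equivariance from $\gamma_2^{-1}\centerdot\omega=\omega$. One small correction: that a meromorphic function with effective divisor is analytic follows from the normality (smoothness) of $\mathcal{H}_p^2$. It does not follow from the exactness in Proposition \ref{propdivissurjective}, which only says that functions with trivial divisor are units.
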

The crucial property of the evaluation map $\eva$ is its compatibility with the intersection map $\inte$:
\begin{lemma}\label{propositioncommutativediagramevandinterse}
The diagram
\begin{center}
\begin{minipage}{\linewidth}
\centering
\begin{tikzcd}
 \mathcal{M}_{\mathrm{rq}}^\times\arrow[r]{r}{\mathrm{div}}\arrow[d]{d}{\eva}  & \mathrm{Div}^\dagger_{\mathrm{rq}}(\mathcal{H}_p^2) \arrow[d]{d}{\inte} \\
\mathcal{M}_{1}^\times \arrow[r]{r}{\mathrm{div}} & \mathrm{Div}^\dagger(\mathcal{H}_p)
 \end{tikzcd}
\end{minipage}
\end{center}
commutes.
\begin{proof}
Let $f\in \mathcal{M}_{\mathrm{rq}}^\times$ be a rigid meromorphic function with rational quadratic divisor $\mathcal{D}\coloneq\mathrm{div}(f)$. The proof of Proposition 3.2 in Section 3.1.1 of \cite{DGL} implies that $\mathcal{D}$ can be written as a a sum $\mathcal{D}=\sum_{m=0}^\infty\mathcal{D}_m$ where the formal sums $\mathcal{D}_m=\sum n_{v_i}\cdot \Delta_{v_i,p}$ are genuine divisors. That is, they are finite formal sums. Moreover, they construct certain simple rigid analytic functions $f_v\in \mathcal{M}_{\mathrm{rq}}^\times$ with $\mathrm{div}(f_v)=\Delta_{v,p}$ and prove that the infinite product
\begin{align*}
f_\mathcal{D}\coloneq\prod_{m\geq 0} f_m \quad \mathrm{with} \quad f_m\coloneq \prod_{i} f_{v_i}^{n_{v_i}}\in \mathcal{M}_{\mathrm{rq}}^\times
\end{align*} 
converges to a rigid meromorphic function on $\mathcal{H}_p^2$ whose associated divisor is $\mathcal{D}$. Since the evaluation map $\eva$ maps invertible rigid analytic functions on $\mathcal{H}_p^2$ to invertible rigid analytic functions on $\mathcal{H}_p$, it is enough to consider $f=f_{\mathcal{D}}$. In this case, a simple limit argument implies that it suffices to consider $f=f_v$. For this functions it is easy to verify that 
\begin{align*}
\mathrm{div}\circ \eva(f_v)=v\centerdot \omega=\inte \circ \ \mathrm{div}(f_v).
\end{align*}
The commutativity follows.
\end{proof}
\end{lemma}
The commutative diagram in the previous lemma yields the diagram in cohomology
\begin{center}
\begin{minipage}{\linewidth}
\centering
\begin{tikzcd}
 \mathrm{H}^2(\Gamma^2,\mathcal{M}_{\mathrm{rq}}^\times)\arrow[rr]{rr}{\mathrm{div}}\arrow[d]{d}{\eva}&  & \mathrm{H}^2(\Gamma^2,\mathrm{Div}^\dagger_{\mathrm{rq}}(\mathcal{H}_p^2)) \arrow[d]{d}{\inte} \\
\mathrm{H}^2(\Gamma\times \Gamma_\omega^+,\mathcal{M}_{\tau}^\times) \arrow[rr]{rr}{\mathrm{div}} & & \mathrm{H}^2(\Gamma\times \Gamma_\omega^+,\mathrm{Div}^\dagger\left(X_\tau\right)).
 \end{tikzcd}
\end{minipage}
\end{center}
We have:
\begin{lemma}\label{mainpropofchapter4}
Consider the cohomology cross-product 
\begin{align*}
\mathrm{H}^1(\Gamma,\mathcal{M}_{\tau}^\times)\rightarrow \mathrm{H}^2(\Gamma\times \Gamma_\omega^+,\mathcal{M}_{\tau}^\times), \qquad \mathcal{J}\mapsto \mathcal{J}\times \eta_\omega.
\end{align*}
Up to a class in $\mathrm{H}^2(\Gamma\times \Gamma_\omega^+,\mathcal{A}_{1}^\times)$ we have 
\begin{align*}
\left(\mathcal{J}_\omega^\mathrm{T}\right)^{2}\times \eta_\omega=\eva\left(\mathcal{J}_{1}^{\mathrm{T}}\right)\in \mathrm{H}^2(\Gamma\times \Gamma_\omega^+,\mathcal{M}_\tau^\times).
\end{align*}
\begin{proof}
The lemma is obvious by the main theorem Theorem \ref{mainthm} and the commutative diagram above.
\end{proof}
\end{lemma}
We can meaningfully evaluate classes in $\mathrm{H}^2(\Gamma\times \Gamma_\omega^+,\mathcal{M}_{\tau}^\times)$ at the RM point $\tau$. Concretely, the $\Gamma_\tau$-module homomorphism $\mathrm{ev}_\tau\colon \mathcal{M}_{\tau}^\times\rightarrow \mathbb{C}_p^\times$ induces a homomorphism in cohomology
\begin{align*}
\mathrm{H}^2(\Gamma\times \Gamma_\omega^+,\mathcal{M}_{\tau}^\times) \xrightarrow{\res}  \mathrm{H}^2(\Gamma_\tau^+\times\Gamma_\omega^+,\mathcal{M}_{\tau}^\times) \xrightarrow{\mathrm{ev}_\tau}  \mathrm{H}^2(\Gamma_\tau^+\times\Gamma_\omega^+,\mathbb{C}_p^\times).
\end{align*}
With abuse of notation we denote this map by $\mathrm{ev}_\tau$. 
\begin{lemma}
The maps 
\begin{align*}
\mathrm{ev}_{(\tau,\omega)}\colon \mathrm{H}^2(\Gamma^2,\mathcal{M}_{\mathrm{rq}}^\times) \rightarrow \mathrm{H}^2(\Gamma_\tau^+\times\Gamma_\omega^+,\mathbb{C}_p^\times)
\end{align*}
and
\begin{align*}
\mathrm{ev}_\tau\circ \mathrm{ev}_{(-,\omega)}\colon \mathrm{H}^2(\Gamma^2,\mathcal{M}_{\mathrm{rq}}^\times)\rightarrow \mathrm{H}^2(\Gamma\times \Gamma_\omega^+,\mathcal{M}_\tau^\times) \rightarrow \mathrm{H}^2(\Gamma_\tau^+\times\Gamma_\omega^+,\mathbb{C}_p^\times)
\end{align*}
coincide. In particular, for every class $\mathcal{J}\in \mathrm{H}^2(\Gamma^2,\mathcal{M}_{\mathrm{rq}}^\times)$ we have
\begin{align*}
\mathcal{J}[\tau,\omega]=\left(\mathrm{ev}_\tau\circ \eva (\mathcal{J})\right)\cap c_{\tau,\omega}.
\end{align*}
\begin{proof}
The identification of the maps in cohomology follows from the obvious identity 
\begin{align*}
\mathrm{ev}_{(\tau,\omega)}=\mathrm{ev}_\tau\circ \eva\colon \mathcal{M}_{\mathrm{rq}}^\times \rightarrow \mathbb{C}_p^\times.
\end{align*}
\end{proof}
\end{lemma}
\begin{corollary}\label{maincorofrommainpropchapter4}
We have
\begin{align*}
\mathcal{J}_1^{\mathrm{T}}[\tau,\omega]=\left(\mathrm{ev}_\tau\left(\mathcal{J}_\omega^{\mathrm{T}} \times \eta_\omega\right)\cap c_{\tau,\omega}\right)^{2}
\end{align*}
up to multiplication with a $p$-adic number in $\mathrm{ev}_\tau\left(\mathrm{H}^2(\Gamma\times \Gamma_\omega^+,\mathcal{A}_{1}^\times)\right)\cap c_{\tau,\omega}$.
\end{corollary}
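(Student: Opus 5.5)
The plan is to combine Lemma \ref{mainpropofchapter4} with the preceding lemma identifying $\mathrm{ev}_{(\tau,\omega)}$ with $\mathrm{ev}_\tau\circ\eva$ on $\mathrm{H}^2(\Gamma^2,\mathcal{M}_{\mathrm{rq}}^\times)$. By that lemma,
\begin{align*}
\mathcal{J}_1^{\mathrm{T}}[\tau,\omega]=\left(\mathrm{ev}_\tau\circ\eva(\mathcal{J}_1^{\mathrm{T}})\right)\cap c_{\tau,\omega}.
\end{align*}
Lemma \ref{mainpropofchapter4} then lets us replace $\eva(\mathcal{J}_1^{\mathrm{T}})$ by $(\mathcal{J}_\omega^{\mathrm{T}})^2\times\eta_\omega$, up to a class $\mathcal{A}$ in the image of $\mathrm{H}^2(\Gamma\times\Gamma_\omega^+,\mathcal{A}_1^\times)\rightarrow \mathrm{H}^2(\Gamma\times\Gamma_\omega^+,\mathcal{M}_\tau^\times)$. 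Both $\mathrm{ev}_\tau$ (restriction to $\Gamma_\tau^+\times\Gamma_\omega^+$ followed by the coefficient map $f\mapsto f(\tau)$) and the cap product with $c_{\tau,\omega}$ are group homomorphisms. Hence the difference $\mathcal{A}$ contributes exactly the factor $\mathrm{ev}_\tau(\mathcal{A})\cap c_{\tau,\omega}$, which is one of the admissible error terms in the statement.

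The only remaining point is to pull out the square. Write the group structure on $\mathrm{H}^1(\Gamma,\mathcal{M}_\tau^\times)$ multiplicatively. The cross product $\mathcal{J}\mapsto\mathcal{J}\times\eta_\omega$ is bilinear, so
\begin{align*}
(\mathcal{J}_\omega^{\mathrm{T}})^2\times\eta_\omega=\left(\mathcal{J}_\omega^{\mathrm{T}}\times\eta_\omega\right)^2.
\end{align*}
Applying the homomorphism $\mathrm{ev}_\tau$ and capping with $c_{\tau,\omega}$ turns this square into the square of the $p$-adic number $\mathrm{ev}_\tau(\mathcal{J}_\omega^{\mathrm{T}}\times\eta_\omega)\cap c_{\tau,\omega}$, which gives the displayed identity.

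The one step needing care is that Lemma \ref{mainpropofchapter4} is phrased with the cross product landing in $\mathcal{M}_\tau^\times$-valued cohomology, while $\eva(\mathcal{J}_1^{\mathrm{T}})$ a priori lies in $\mathrm{H}^2(\Gamma\times\Gamma_\omega^+,\mathcal{M}_1^\times)$. I would handle this as follows. Since $\tau\notin G\centerdot\omega$, the divisor $\inte$ of any rational quadratic divisor is supported on $G\centerdot\omega$, which is disjoint from $G\centerdot\tau$. So $\eva$ actually takes values in $\mathcal{M}_\tau^\times$, matching the commutative diagram stated before Lemma \ref{mainpropofchapter4}. Similarly, $\mathcal{J}_\omega^{\mathrm{T}}$ is taken as its unique preimage in $\mathrm{H}^1(\Gamma,\mathcal{M}_\tau^\times)$ by Lemma \ref{regulardefinitionmono}. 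With these identifications everything takes place in $\mathcal{M}_\tau^\times$-coefficients, and the corollary follows formally.
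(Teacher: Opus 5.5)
Your proof is correct and is exactly the intended argument. The paper gives no separate proof of this corollary; it is the immediate combination of Lemma~\ref{mainpropofchapter4} with the preceding lemma identifying $\mathrm{ev}_{(\tau,\omega)}$ with $\mathrm{ev}_\tau\circ\mathrm{ev}_{(-,\omega)}$, using that restriction, evaluation, capping with $c_{\tau,\omega}$ and $\mathcal{J}\mapsto\mathcal{J}\times\eta_\omega$ are all homomorphisms. Your extra check that $\mathrm{ev}_{(-,\omega)}$ lands in $\mathcal{M}_\tau^\times$ (via Lemma~\ref{propositioncommutativediagramevandinterse} and $G\centerdot\omega\cap G\centerdot\tau=\emptyset$) is what the paper tacitly uses in the cohomological diagram preceding Lemma~\ref{mainpropofchapter4}.
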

Denote by $\kappa_\omega\in \mathrm{H}^0(\Gamma_\omega^+,\mathbb{Z})$ the distinguished generator determined by $\gamma_\omega\in \Gamma_\omega^+$. The K\"unneth Formula for group cohomology implies that the homomorphism
\begin{align*}
\mathrm{H}^1(\Gamma,\mathcal{M}_{\tau}^\times)\oplus \mathrm{H}^2(\Gamma,\mathcal{M}_{\tau}^\times)\rightarrow \mathrm{H}^2(\Gamma\times \Gamma_\omega^+, \mathcal{M}_{\tau}^\times)
\end{align*} 
that maps a pair $(\mathcal{J}_1,\mathcal{J}_2)$ to the cohomology cross-product $\mathcal{J}_1\times \eta_\omega \  \cdot \  \mathcal{J}_2\times \kappa_\omega$ is an isomorphism. We note:
\begin{lemma}\label{finallemmachater4}
Let $\mathcal{J}_1\in \mathrm{H}^1(\Gamma,\mathcal{M}_{\tau}^\times)$ and $\mathcal{J}_2\in \mathrm{H}^2(\Gamma,\mathcal{M}_{\tau}^\times)$ be cohomology classes. Then
\begin{align*}
\mathrm{ev}_\tau(\mathcal{J}_1\times \eta_\omega) \cap c_{\tau,\omega}=\mathcal{J}_1[\tau]^{-1}
\end{align*}
where $\mathcal{J}_1[\tau]$ is the $p$-adic number defined in Definition \ref{defvalueii} and 
\begin{align*}
\mathrm{ev}_\tau(\mathcal{J}_2\times \kappa_\omega) \cap c_{\tau,\omega}=1.
\end{align*}
\begin{proof}
The cohomology cross-product is compatible with restriction maps. Thus,
\begin{align*}
\res(\mathcal{J}_1\times \eta_\omega)=\res(\mathcal{J}_1)\times \eta_\omega \qquad \mathrm{and} \qquad \res(\mathcal{J}_2\times \kappa_\omega)=\res(\mathcal{J}_2)\times \kappa_\omega
\end{align*}
as classes in $\mathrm{H}^2(\Gamma_\tau^+\times \Gamma_\omega^+,\mathcal{M}_{\tau}^\times)$. The group $\mathrm{H}^2(\Gamma_\tau^+,\mathcal{M}_{\tau}^\times)$ vanishes and, therefore, the class $\res(\mathcal{J}_2)\times \kappa_\omega$ must be trivial. The second identity follows.\\
The compatibility of the cohomology cross-product with module homomorphisms implies that
\begin{align*}
\mathrm{ev}_\tau(\res(\mathcal{J}_1)\times \eta_\omega)=\mathrm{ev}_\tau(\res(\mathcal{J}_1)) \times \eta_\omega\in \mathrm{H}^2(\Gamma_\tau^+\times \Gamma_\omega^+,\mathbb{C}_p^\times).
\end{align*}
The first identity is now an immediate consequence of the fact that for any class $\mathcal{I}\in \mathrm{H}^1(\Gamma_\tau^+,\mathbb{C}_p^\times)$ we have
\begin{align*}
(\mathcal{I}\times \eta_\omega)\cap c_{\tau,\omega}=(\mathcal{I}\cap c_\tau)^{-1}.
\end{align*}
\end{proof}
\end{lemma}
The previous lemma together with Lemma \ref{corvalueatanalyticisunit} implies that the ambiguity in Corollary \ref{maincorofrommainpropchapter4} fits to the ambiguity in Theorem \ref{proptoproveconjecture}.
\begin{corollary}\label{corollaryanalytic22}
We have
\begin{align*}
\mathrm{ev}_\tau\left(\mathrm{H}^2(\Gamma\times \Gamma_\omega^+,\mathcal{A}_{1}^\times)\right)\cap c_{\tau,\omega}\subset (\mathbb{C}_p^\times)_{\mathrm{tor}}\cdot \varepsilon_\tau^\mathbb{Z}.
\end{align*}
\begin{proof}
Let $\mathcal{J}=(\mathcal{J}_1\times \eta_\omega)\cdot  (\mathcal{J}_2\times \kappa_\omega)$ with classes $\mathcal{J}_1\in \mathrm{H}^1(\Gamma,\mathcal{A}_{1}^\times)$ and  $\mathcal{J}_2\in \mathrm{H}^2(\Gamma,\mathcal{A}_{1}^\times)$. The previous lemma implies that
\begin{align*}
\mathrm{ev}_\tau(\mathcal{J})\cap c_{\tau,\omega}=\mathrm{ev}_\tau(\mathcal{J}_1\times \eta_\omega)\cap c_{\tau,\omega}=\mathcal{J}_1[\tau]^{-1}
\end{align*}
and the $p$-adic number $\mathcal{J}_1[\tau]$ is contained in the set $(\mathbb{C}_p^\times)_{\mathrm{tor}}\cdot \varepsilon_\tau^\mathbb{Z}$ by Lemma \ref{corvalueatanalyticisunit}.
\end{proof}
\end{corollary}
Furthermore, Lemma \ref{finallemmachater4} implies immediately:
\begin{corollary}
We have 
\begin{align*}
\mathrm{ev}_\tau\left(\mathcal{J}_\omega^{\mathrm{T}} \times \eta_\omega\right)\cap c_{\tau,\omega}=J_p^\mathrm{T}(\tau,\omega)^{-1},
\end{align*}
up to a $p$-adic number in $(\mathbb{C}_p^\times)_{\mathrm{tor}}\cdot \varepsilon_\tau^\mathbb{Z}$.
\end{corollary}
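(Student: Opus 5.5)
The corollary is the first identity of Lemma \ref{finallemmachater4}, applied to the specific class $\mathcal{J}_1=\mathcal{J}_\omega^\mathrm{T}$; the only real content is bookkeeping of the ambiguity in the choice of lift.

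First we have to make sense of the left-hand side. Since $\omega\notin G\centerdot\tau$, the cocycle $\mathcal{J}_\omega^\mathrm{T}$ is regular at $\tau$. By Lemma \ref{regulardefinitionmono} it therefore has a unique preimage in $\mathrm{H}^1(\Gamma,\mathcal{M}_\tau^\times)$, and we identify $\mathcal{J}_\omega^\mathrm{T}$ with this preimage. Thus $\mathcal{J}_\omega^\mathrm{T}\times\eta_\omega$ is a well-defined class in $\mathrm{H}^2(\Gamma\times\Gamma_\omega^+,\mathcal{M}_\tau^\times)$, and $\mathrm{ev}_\tau$ may be applied to it.

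Next we apply Lemma \ref{finallemmachater4} with $\mathcal{J}_1=\mathcal{J}_\omega^\mathrm{T}$. This gives
\begin{align*}
\mathrm{ev}_\tau(\mathcal{J}_\omega^\mathrm{T}\times\eta_\omega)\cap c_{\tau,\omega}=\mathcal{J}_\omega^\mathrm{T}[\tau]^{-1},
\end{align*}
and the right-hand side equals $J_p^\mathrm{T}(\tau,\omega)^{-1}$ by the definition $J_p^\mathrm{T}(\tau,\omega)\coloneq\mathcal{J}_\omega^\mathrm{T}[\tau]$.

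It remains to control the ambiguity. The lift $\mathcal{J}_\omega^\mathrm{T}$ is determined only up to multiplication by a class $\mathcal{A}\in\mathrm{H}^1(\Gamma,\mathcal{A}_1^\times)$. The cross product with $\eta_\omega$, the map $\mathrm{ev}_\tau$ and the cap product with $c_{\tau,\omega}$ are all homomorphisms. Hence replacing $\mathcal{J}_\omega^\mathrm{T}$ by $\mathcal{J}_\omega^\mathrm{T}\cdot\mathcal{A}$ changes the left-hand side by the factor $\mathcal{A}[\tau]^{-1}$, again by Lemma \ref{finallemmachater4}. By Lemma \ref{corvalueatanalyticisunit} this factor lies in $(\mathbb{C}_p^\times)_{\mathrm{tor}}\cdot\varepsilon_\tau^\mathbb{Z}$. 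The same group already bounds the ambiguity in $J_p^\mathrm{T}(\tau,\omega)$, so the identity holds up to this group, as claimed.

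No step presents a genuine obstacle. The one point needing care is that the preimage in $\mathrm{H}^1(\Gamma,\mathcal{M}_\tau^\times)$ is compatible with the multiplicative ambiguity. This holds because the map of Lemma \ref{regulardefinitionmono} is an injective group homomorphism and every class in $\mathrm{H}^1(\Gamma,\mathcal{A}_1^\times)$ already comes from $\mathrm{H}^1(\Gamma,\mathcal{M}_\tau^\times)$.
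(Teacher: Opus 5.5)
Your proposal is correct and is the paper's argument: the paper obtains the corollary ``immediately'' from the first identity of Lemma \ref{finallemmachater4} with $\mathcal{J}_1=\mathcal{J}_\omega^{\mathrm{T}}$, and the ambiguity from the choice of lift is absorbed by Lemma \ref{corvalueatanalyticisunit}, exactly as you describe. One small precision: Lemma \ref{regulardefinitionmono} gives only uniqueness of the preimage in $\mathrm{H}^1(\Gamma,\mathcal{M}_\tau^\times)$, while its existence follows from regularity at $\tau$ together with the two divisor exact sequences, and Definition \ref{defvalueii} in the paper likewise takes this existence for granted.
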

The desired Theorem \ref{proptoproveconjecture} follows from Corollary \ref{maincorofrommainpropchapter4}. Finally, Theorem \ref{proptoproveconjecture} implies the antisymmetry conjecture of Darmon--Vonk.
\begin{corollary}
Conjecture \ref{mainconjecturetoprove} is true.
\begin{proof}
This follows from Theorem \ref{proptoproveconjecture} and Proposition \ref{propforantisymmetry}.
\end{proof}
\end{corollary}

\newpage
\appendix
\section{The K\"unneth Formula for group (co)homology }\label{appendixkunneth}
The aim of this appendix is to prove the K\"unneth Formula for group (co)homology. The statements are probably well-known, however, our computations highly rely on explicit formulas for the (co)chain cross-product so that we find it helpful to recall the concepts here. \\
Let $C^1_\bullet$ and $C^2_\bullet$ be two chain complexes. The tensor product $C^1_\bullet\otimes C^2_\bullet$ of the complexes $C_\bullet^1$ and $C_\bullet^2$ is defined to be the total complex of the double complex which is the objectwise tensor product. That is, $C^1_\bullet\otimes C^2_\bullet$ is the complex with objects
\begin{align*}
\left(C^1_\bullet\otimes C^2_\bullet\right)_n=\bigoplus_{r+s=n} C^1_r\otimes C^2_s
\end{align*} 
and differentials
\begin{align*}
\partial_{r+s}(c_1\otimes c_2)=\partial_r(c_1)\otimes c_2 + (-1)^{r} c_1\otimes \partial_s(c_2) 
\end{align*}
for $c_1\in C_r^1$ and $c_2\in C_s^2$. \\
Consider two groups $\mathrm{G}_1$ and $\mathrm{G}_2$ and a $\mathrm{G}_1$-module $\Omega_1$ and a $\mathrm{G}_2$-module $\Omega_2$, respectively. We endow the tensor product $\Omega_1\otimes \Omega_2$ with a $\mathrm{G}_1\times \mathrm{G}_2$-module structure induced by
\begin{align*}
(g_1,g_2) \  (\omega_1\otimes \omega_2)\coloneq (g_1\omega_1)\otimes (g_2\omega_2) \quad \mathrm{for} \quad g_i\in \mathrm{G}_i, \omega_i\in \Omega_i \quad \mathrm{with} \quad i=1,2.
\end{align*}
Assume that $C_\bullet^1$ is a complex of $\mathbb{Z}[\mathrm{G}_1]$-modules and that $C_\bullet^2$ is a complex of $\mathbb{Z}[\mathrm{G}_2]$-modules, respectively. Let $u_1\in \Hom_{\mathrm{G}_1}(C_r^1,\Omega_1)$ and $u_2\in \Hom_{\mathrm{G}_2}(C_s^2,\Omega_2)$ be homomorphisms. The \emph{cochain cross-product} of $u_1$ and $u_2$ is the homomorphism
\begin{align}
u_1\times u_2\in \Hom_{\mathrm{G}_1\times \mathrm{G}_2}\left(\left(C_\bullet^1\otimes C_\bullet^2\right)_{r+s}, \Omega_1\otimes \Omega_2\right)
\end{align}
defined as the composition of the projection map $\left(C_\bullet^1\otimes C_\bullet^2\right)_{r+s}\rightarrow C_r^1\otimes C_s^2$ with the map 
\begin{align*}
C_r^1\otimes C_s^2 \rightarrow \Omega_1\otimes \Omega_1, \qquad c_r^1\otimes c_s^2\mapsto (-1)^{r\cdot s} \cdot u_1(c_r^1)\otimes u_2(c_s^2).
\end{align*}
By construction, the cochain cross-product induces a chain map 
\begin{align*}
 \Hom_{\mathrm{G}_1}(C_\bullet^1,\Omega_1) \otimes  \Hom_{\mathrm{G}_2}(C_\bullet^2,\Omega_2)\rightarrow  \Hom_{\mathrm{G}_1\times \mathrm{G}_2}(C_\bullet^1\otimes C_\bullet^2,\Omega_1\otimes \Omega_2).
\end{align*}
\begin{remark}
We follow the conventions used in \cite{brown} to define group cohomology. That is, we take the functor $\Hom$ with differential $\delta_{n-1}=(-1)^{n}(\partial_{n})^\star$ instead of the functor $\Hom$ with differential $(\partial_{n})^\star$ to compute the cohomology of a group. However, the cohomology groups are often defined in the literature by means of $(\Hom,(\partial_n)^\star)$. In this case the cochain cross-product is given without the sign $(-1)^{r\cdot s}$. See for example \cite{Weibel}, Chapter 6.1, the discussion before Exercise 6.1.8.
\end{remark}
Assume from now on that $C_\bullet^1$ is a resolution of $\mathbb{Z}$ over $\mathbb{Z}[\mathrm{G}_1]$ and that $C_\bullet^2$ is a resolution of $\mathbb{Z}$ over $\mathbb{Z}[\mathrm{G}_2]$, respectively. If both resolutions are projective resolutions, then the tensor product is a projective resolution of $\mathbb{Z}$ over $\mathbb{Z}[\mathrm{G}_1\times \mathrm{G}_2]$ by \cite{brown}, Chapter V.1, Proposition 1.1. Thus, the cochain cross-product induces a \emph{cohomology cross-product}
\begin{align}
\mathrm{H}^r(\mathrm{G}_1,\Omega_1)\otimes \mathrm{H}^s(\mathrm{G}_2,\Omega_2)\rightarrow \mathrm{H}^{r+s}(\mathrm{G}_1\times \mathrm{G}_2,\Omega_1\otimes \Omega_2).
\end{align}
We remind ourselves that a group $\mathrm{G}$ is said to be of \emph{of type $\mathrm{FP}_\infty$} if there is a resolution of $\mathbb{Z}$ over $\mathbb{Z}[\mathrm{G}]$ that consists of finitely generated free $\mathbb{Z}[\mathrm{G}]$-modules. In particular,
\begin{enumerate}
\item[\textbullet]every infinite cyclic group,
\item[\textbullet]the modular group $\SL_2(\mathbb{Z})$ and all congruence subgroups $\Gamma_0(p)$ and 
\item[\textbullet]by the work of Borel--Serre \cite{borelserre1976} every $p$-arithmetic subgroup of a reductive algebraic group is of type $\mathrm{FP}_\infty$. E.g., ~Ihara's group $\SL_2(\mathbb{Z}[1/p])$ is of type $\mathrm{FP}_\infty$.
\end{enumerate}  
One of the two main results of this appendix is the following theorem:
\begin{theorem}[K\"unneth Formula for group cohomology]\label{kunnethformula}
Let $\mathrm{G}_2$ be a group of type $\mathrm{FP}_\infty$ and let $\Omega_2$ be a $\mathrm{G}_2$-module that is free and of finite rank as a $\mathbb{Z}$-module. Then for every $\mathrm{G}_1$-module $\Omega_1$ there is a natural split exact sequence
\begin{align*}
0\rightarrow &\bigoplus_{r+s=n} \mathrm{H}^r(\mathrm{G}_1,\Omega_1)\otimes \mathrm{H}^s(\mathrm{G}_2,\Omega_2)\rightarrow \mathrm{H}^n (\mathrm{G}_1\times \mathrm{G}_2,\Omega_1\otimes \Omega_2)\\
\rightarrow &\bigoplus_{r+s=n+1} \mathrm{Tor}_1^\mathbb{Z}\left(\mathrm{H}^r(\mathrm{G}_1,\Omega_1),\mathrm{H}^s(\mathrm{G}_2,\Omega_2)\right)\rightarrow 0
\end{align*}
where the first map is the cohomology cross-product.
\begin{proof}
Let $C_\bullet^1$ be a free resolution of $\mathbb{Z}$ over $\mathbb{Z}[\mathrm{G}_1]$ and $C_\bullet^2$ be a resolution of $\mathbb{Z}$ over $\mathbb{Z}[\mathrm{G}_2]$ such that all modules $C_s^2$ are free $\mathbb{Z}[\mathrm{G}_2]$-modules of finite rank. We claim that the cochain cross-product
\begin{align*}
\Hom_{\mathrm{G}_1}(C_r^1,\Omega_1)\otimes \Hom_{\mathrm{G}_2}(C^2_s,\Omega_2)\rightarrow \Hom_{\mathrm{G}_1\times \mathrm{G}_2}\left(C^1_r\otimes C^2_s,\Omega_1\otimes \Omega_2\right)
\end{align*}
is an isomorphism for all $r,s\geq 0$.\\
Fix integers $r,s\geq 0$. Since tensor products are compatible with direct sums and the functors $\Hom_{\mathrm{G}_2}(-,\Omega_2)$ and $\Hom_{\mathrm{G}_1\times \mathrm{G}_2}(-,\Omega_1\otimes \Omega_2)$ commute with finite direct sums, it is enough to consider the case $C_s^2=\mathbb{Z}[\mathrm{G}_2]$. Let $C_r^1=\bigoplus_{i\in I} \mathbb{Z}[\mathrm{G}_1]\cdot c_i$. We compute
\begin{align*}
\Hom_{\mathrm{G}_1}(C_r^1,\Omega_1)\otimes \Hom_{\mathrm{G}_2}(\mathbb{Z}[\mathrm{G}_2],\Omega_2)&\simeq  \left(\prod_{i\in I} \Hom_{\mathrm{G}_1}(\mathbb{Z}[\mathrm{G}_1],\Omega_1)\right)\otimes \Omega_2\\
&\simeq \prod_{i\in I} \Omega_1\otimes \Omega_2.
\end{align*}
Note that the second isomorphism relies on the assumption that $\Omega_2$ is a finitely generated projective (equivalently free) $\mathbb{Z}$-module. Conversely, we compute
\begin{align*}
\Hom_{\mathrm{G}_1\times \mathrm{G}_2}\left(C^1_r\otimes \mathbb{Z}[\mathrm{G}_2],\Omega_1\otimes \Omega_2\right)&\simeq \prod_{i\in I} \Hom_{\mathrm{G}_1\times \mathrm{G}_2}\left(\mathbb{Z}[\mathrm{G}_1]\otimes \mathbb{Z}[\mathrm{G}_2],\Omega_1\otimes \Omega_2\right) \\
&\simeq \prod_{i\in I} \Omega_1\otimes \Omega_2.
\end{align*}
Unravelling the explicit descriptions of the isomorphisms yields that the cochain cross-product corresponds to the map $(-1)^{r\cdot s}\cdot \mathrm{id}$ which is an isomorphism. The claim follows. Thus, the chain map 
\begin{align*}
 \Hom_{\mathrm{G}_1}(C_\bullet^1,\Omega_1) \otimes  \Hom_{\mathrm{G}_2}(C_\bullet^2,\Omega_2)\rightarrow  \Hom_{\mathrm{G}_1\times \mathrm{G}_2}(C_\bullet^1\otimes C_\bullet^2,\Omega_1\otimes \Omega_2)
\end{align*}
is a dimension-wise isomorphism. The theorem follows now from the K\"unneth Formula for complexes in \cite{brown}, Chapter I.0, Proposition 0.8, because the complex $\Hom_{\mathrm{G}_2}(C_\bullet^2,\Omega_2)$ is a complex of dimension-wise free $\mathbb{Z}$-modules.
\end{proof}
\end{theorem} 
In the next lines, we will state and prove the analogous statement for group homology. In this theorem, the cohomology cross-product is replaced by the homology cross-product which is induced by the chain cross-product: ~As before, let $C_\bullet^1$ be a complex of $\mathbb{Z}[\mathrm{G}_1]$-modules and $C_\bullet^2$ be a complex of $\mathbb{Z}[\mathrm{G}_2]$-modules, respectively. The \emph{chain cross-product} is defined to be the homomorphism 
\begin{align*}
\left(C_r^1\otimes_{\mathrm{G}_1} \Omega_1\right) \otimes \left(C_s^2\otimes_{\mathrm{G}_2} \Omega_2\right) \rightarrow (C_r^1\otimes C_s^2)\otimes_{\mathrm{G}_1\times \mathrm{G}_2} (\Omega_1\otimes \Omega_2)
\end{align*}
induced by 
\begin{align*}
(c_1\otimes \omega_1) \otimes (c_2\otimes \omega_2)\mapsto (c_1\otimes c_1) \otimes (\omega_1\otimes \omega_2).
\end{align*}
One easily verifies that the chain cross-product gives rise to a chain map 
\begin{align*}
\left(C_\bullet^1\otimes_{\mathrm{G}_1} \Omega_1\right) \otimes \left(C_\bullet^2\otimes_{\mathrm{G}_2} \Omega_2\right) \rightarrow (C_\bullet^1\otimes C_\bullet^2)\otimes_{\mathrm{G}_1\times \mathrm{G}_2} (\Omega_1\otimes \Omega_2).
\end{align*}
In particular, if $C_\bullet^1$ and $C_\bullet^2$ are projective resolutions, then the chain cross-product  induces the \emph{homology cross-product}
\begin{align}
\mathrm{H}_r(\mathrm{G}_1,\Omega_1)\otimes \mathrm{H}_s(\mathrm{G}_2,\Omega_2)\rightarrow \mathrm{H}_{r+s} (\mathrm{G}_1\times \mathrm{G}_2,\Omega_1\otimes \Omega_2).
\end{align}
We prove:
\begin{theorem}[K\"unneth Formula for group homology]
Let $\Omega_2$ be free as a $\mathbb{Z}$-module. Then for every integer $n\geq 0$ there exists a natural split exact sequence 
\begin{align*}
0\rightarrow &\bigoplus_{r+s=n} \mathrm{H}_r(\mathrm{G}_1,\Omega_1)\otimes \mathrm{H}_s(\mathrm{G}_2,\Omega_2)\rightarrow \mathrm{H}_n (\mathrm{G}_1\times \mathrm{G}_2,\Omega_1\otimes \Omega_2)\\
\rightarrow &\bigoplus_{r+s=n+1} \mathrm{Tor}_1^\mathbb{Z}\left(\mathrm{H}_r(\mathrm{G}_1,\Omega_1),\mathrm{H}_s(\mathrm{G}_2,\Omega_2)\right)\rightarrow 0
\end{align*}
where the first map is the homology cross-product.
\begin{proof}
Let $C_\bullet^1$ be a free resolution of $\mathbb{Z}$ over $\mathbb{Z}[\mathrm{G}_1]$ and $C_\bullet^2$ be a free resolution of $\mathbb{Z}$ over $\mathbb{Z}[\mathrm{G}_2]$ such that the tensor product $C_\bullet^1\otimes C_\bullet^2$ is a free resolution of $\mathbb{Z}$ over $\mathbb{Z}[\mathrm{G}_1\times \mathrm{G}_2]$. The compatibility of tensor products with direct sums yields that the chain cross-product
\begin{align*}
\left(C_\bullet^1\otimes_{\mathrm{G}_1} \Omega_1\right) \otimes \left(C_\bullet^2\otimes_{\mathrm{G}_2} \Omega_2\right) \rightarrow (C_\bullet^1\otimes C_\bullet^2)\otimes_{\mathrm{G}_1\times \mathrm{G}_2} (\Omega_1\otimes \Omega_2)
\end{align*}
is dimension-wise an isomorphism. The theorem follows from the K\"unneth Formula for complexes by noting that the complex $C_\bullet^2\otimes_{\mathrm{G}_2} \Omega_2$ is a complex of free $\mathbb{Z}$-modules.
\end{proof}
\end{theorem}

\bibliographystyle{abbrv}
\bibliography{bibfile}

\end{document}